\documentclass[reqno,11pt]{amsart}

\usepackage[T1]{fontenc}
\usepackage[utf8]{inputenc}
\usepackage{lmodern}
\usepackage[a4paper,margin=1in]{geometry}
\usepackage{amsmath,amssymb,amsthm,mathtools,mathrsfs}
\usepackage{microtype}
\usepackage{enumitem}
\usepackage{cite}
\usepackage{hyperref}
\hypersetup{
 colorlinks=true,
 linkcolor=blue,
 citecolor=blue,
 urlcolor=blue,
 pdftitle={Stable and Unstable Potential-Well Dynamics for an Indirectly Damped Wave--MGT System with General Focusing Sources},
 pdfauthor={Tae Gab Ha},
 pdfsubject={Potential-well dynamics, indirect stabilization, and finite-time blow-up for a wave--MGT system with general focusing sources},
 pdfkeywords={wave--MGT system, indirect damping, potential well, high-frequency flux, finite-time blow-up}
}
\allowdisplaybreaks

\newtheorem{theorem}{Theorem}[section]
\newtheorem{proposition}[theorem]{Proposition}
\newtheorem{lemma}[theorem]{Lemma}
\newtheorem{corollary}[theorem]{Corollary}
\theoremstyle{definition}
\newtheorem{definition}[theorem]{Definition}
\theoremstyle{remark}
\newtheorem{remark}[theorem]{Remark}

\numberwithin{equation}{section}

\newcommand{\R}{\mathbb R}
\newcommand{\N}{\mathbb N}
\newcommand{\Hcal}{\mathcal H}
\newcommand{\Wcal}{\mathcal W}
\newcommand{\Ucal}{\mathcal U}
\newcommand{\Ncal}{\mathcal N}
\newcommand{\Xcal}{\mathcal X}
\newcommand{\Ecal}{\mathcal E}
\newcommand{\Escr}{\mathscr E}
\newcommand{\Pcal}{\mathcal P}
\newcommand{\Rcal}{\mathcal R}
\newcommand{\Acal}{\mathcal A}
\newcommand{\Lcal}{\mathcal L}
\newcommand{\Vcal}{\mathcal V_f}
\newcommand{\norm}[1]{\left\lVert #1\right\rVert}
\newcommand{\abs}[1]{\left\lvert #1\right\rvert}
\newcommand{\ip}[2]{\left(#1,#2\right)}
\newcommand{\dual}[2]{\left\langle #1,#2\right\rangle}
\newcommand{\dd}{\,\mathrm d}

\title[Potential-well dynamics for a wave--MGT system]
{Stable and Unstable Potential-Well Dynamics for an Indirectly Damped Wave--MGT System with General Focusing Sources}

\author{Tae Gab Ha}
\address{Department of Mathematics and Institute of Pure and Applied Mathematics, Jeonbuk National University, Jeonju 54896, Republic of Korea}
\email{tgha@jbnu.ac.kr}

\subjclass[2020]{35L57, 35B44, 35B40, 93D20, 35M33}
\keywords{wave--MGT system, indirect damping, potential well, high-frequency flux, finite-time blow-up}

\begin{document}

\begin{abstract}
We study a conservative semilinear wave equation coupled through a zero-order interaction to a dissipative Moore--Gibson--Thompson equation on a bounded domain. The wave component carries no direct damping and is driven by a general focusing source $f(u)$. The augmented variable $w=v+\tau v_t$ reveals an exact coupled energy and a coercive potential-well geometry. The source assumptions are formulated through
\[
 H_\theta(s)=\frac1\theta sf(s)-F(s),
 \qquad F(s)=\int_0^s f(r)\,\dd r,
 \qquad \theta>2.
\]
Under $L^2$-subcritical $C^1$ growth, smallness at the origin, nonnegativity and radial monotonicity of $H_\theta$, and a nontrivial focusing condition, we establish local well-posedness, the exact energy identity, and a continuation alternative for arbitrary finite-energy data. For nonzero coupling, the linearized semigroup is strongly stable, whereas a wave-branch expansion precludes uniform exponential stability and positive-time compactness. Below the coupled well depth, the stable set is positively invariant and generates global solutions, while data with negative Nehari functional blow up in finite time without a sign condition on the initial velocities. At the critical level $E(0)=d$, nonzero coupling yields a complete trichotomy into stable entry, finite-time blow-up, or a stationary Nehari ground state. Under the same coupling condition, every stable trajectory converges weakly to zero without any compactness hypothesis. A renormalized high-frequency identity shows that vanishing of the accumulated nonlinear high--low flux is equivalent to relative compactness of the orbit and to strong convergence in the natural energy space. We give several sufficient criteria, including finite total variation of the nonlinear force in $L^2(\Omega)$. Regular stable data generate global regular solutions; under the same source-variation condition, a corrected high-order energy yields a uniform bound in the regular phase space and $\Delta v_t\in L^2(0,\infty;L^2(\Omega))$. Independently of the Nehari level, a shifted MGT-history argument yields a data-only positive-energy blow-up criterion and blow-up data on every prescribed nonnegative energy shell. Pure-power, logarithmic, and positive finite sums of powers are covered.
\end{abstract}
\maketitle

\section{Introduction}
Let $\Omega\subset\R^n$, $n\ge1$, be a bounded domain with $C^2$ boundary. We study
\begin{equation}\label{eq:original-system}
\begin{cases}
 u_{tt}-\Delta u+\alpha(v+\tau v_t)=f(u),& \text{in }\Omega\times(0,\infty),\\
 \tau v_{ttt}+v_{tt}-\Delta v-b\Delta v_t+\alpha u=0,& \text{in }\Omega\times(0,\infty),\\
 u=v=0,& \text{on }\partial\Omega\times(0,\infty),\\
 (u,u_t)(0)=(u_0,u_1),\\
 (v,v_t,v_{tt})(0)=(v_0,v_1,v_2).
\end{cases}
\end{equation}
Here $\tau>0$ is the relaxation parameter, $b>0$ is the MGT coefficient, and $\alpha\in\R$ is the coupling strength. Throughout the paper,
\begin{equation}\label{eq:parameter-assumptions}
 b>\tau>0,\qquad \abs{\alpha}<\lambda_1,
\end{equation}
where $\lambda_1$ is the first Dirichlet eigenvalue of $-\Delta$ on $\Omega$.

The second equation in \eqref{eq:original-system} is the normalized Moore--Gibson--Thompson equation. It arises in nonlinear acoustics and high-intensity ultrasound when finite thermal relaxation is retained, and its third-order time structure removes the instantaneous-relaxation idealization inherent in classical Fourier-based acoustic models. The linear and semilinear Cauchy theories, boundary-value problems, memory effects, and nonlinear global dynamics of MGT and Jordan--MGT equations have been developed extensively; see, among others, \cite{KaltenbacherLasieckaMarchand,MarchandMcDevittTriggiani,DellOroLasieckaPata,DellOroLasieckaPata2020,LasieckaWang,BucciEller2021,ChenIkehata2021,RackeSaidHouari2021,SaidHouari2022,ChenPalmieri}.

MGT-type constitutive laws also occur outside their original acoustic setting. Recent formulations of MGT thermoviscoelasticity and mixtures of an MGT viscoelastic constituent with an elastic solid show that a third-order relaxation field can be coupled consistently to a conservative elastic component; see \cite{ContiPataPellicerQuintanilla2021,FernandezQuintanilla2022}. Motivated by this continuum-mechanical interpretation, we regard \eqref{eq:original-system} as a minimal energy-consistent reduced model for the interaction of a conservative elastic wave field with an MGT-type relaxing internal field through a distributed reciprocal coupling. In this reading, $u$ represents the conservative wave displacement or amplitude, $v$ is the relaxing internal variable, and $w=v+\tau v_t$ is the associated relaxed, or recoverable, response. The interaction energy $\alpha\ip{u}{w}$ generates the reciprocal zero-order forces $\alpha w$ and $\alpha u$. Accordingly, \eqref{eq:original-system} is used here as a canonical energy-consistent prototype for the indirect transfer of dissipation between a conservative wave field and a relaxing internal field.

The parameter restrictions in \eqref{eq:parameter-assumptions} have corresponding structural meanings. The gap $D=b-\tau>0$ is precisely the coefficient of the irreversible term in the exact dissipation law. On a Dirichlet mode with eigenvalue $\lambda_k$, the static coupled form has stiffness matrix
\[
 \begin{pmatrix}\lambda_k&\alpha\\ \alpha&\lambda_k\end{pmatrix},
\]
so $\abs{\alpha}<\lambda_1$ guarantees positivity of every modal stiffness and prevents a coupling-induced static loss of coercivity. Thus the two inequalities in \eqref{eq:parameter-assumptions} are not merely technical smallness conditions: they encode positive relaxation dissipation and static stability of the reciprocal interaction.

The system belongs to the class of indirectly damped hyperbolic models. The wave equation has no damping of its own; all dissipation is produced by the MGT component and can reach $u$ only through the zero-order coupling. This mechanism is substantially weaker than direct friction or structural damping. The general theory of indirect stabilization shows that the transfer of dissipation depends sensitively on the order of the coupling, propagation speeds, and observability properties; see, among many contributions, \cite{AlabauCannarsaKomornik,Alabau1999,Alabau2002,Alabau2003,AlabauLeautaud,BLR}.

Logarithmic nonlinearities were introduced in nonlinear wave mechanics by Bia\l ynicki-Birula and Mycielski \cite{BialynickiMycielski}, while a rigorous evolution-equation framework was developed by Cazenave and Haraux \cite{CazenaveHaraux}. Recent work on logarithmic wave and viscoelastic equations treats local and global existence, decay, potential wells, and finite-time blow-up under strong damping, memory, variable coefficients, and acoustic boundary effects; see \cite{DiShangSong2020,HaPark2020,HaoDu2022,HaApplicable2023,PengZhang2024}. For pure-power sources, potential-well dynamics and extensions to interior or boundary supercritical regimes have likewise remained active; see \cite{LiuLi2020,HaAMO2021,HaEJDE2025} and the references therein. Potential-well methods for nonlinear hyperbolic equations originate in the work of Sattinger \cite{Sattinger} and Payne--Sattinger \cite{PayneSattinger}; complementary concavity mechanisms go back to Levine and subsequent developments \cite{Levine,LevineTodorova,TodorovaVitillaro,Vitillaro}. Related critical-depth and high-energy developments include \cite{GazzolaSquassina,XuCritical}. The present problem requires a modified geometry because the natural static variables are not $(u,v)$ but $(u,v+\tau v_t)$, and because a Levine functional containing a wave-gradient history term is unavailable in the absence of $-\Delta u_t$.

The decisive structural variable is
\[
 w=v+\tau v_t.
\]
With $D=b-\tau>0$ and $z=v_t$, the augmented formulation reveals the coercive coupled form
\[
 Q_\alpha(u,w)=\norm{\nabla u}_2^2+\norm{\nabla w}_2^2+2\alpha\ip{u}{w},
\]
and an exact energy satisfying
\begin{equation}\label{eq:intro-dissipation}
 E(t)+D\int_0^t\norm{\nabla z(s)}_2^2\,\dd s=E(0).
\end{equation}
Thus $(u,w)$, rather than $(u,v)$, is the natural static pair for the potential-well geometry, while the MGT component is the sole source of dissipation.

To treat general focusing sources, we introduce the structural remainder
\[
 H_\theta(s)=\frac1\theta sf(s)-F(s),\qquad \theta>2. 
\]
Its nonnegativity provides coercivity on the stable side, whereas its radial monotonicity yields the Nehari barrier used in the unstable-side concavity argument. This single remainder therefore unifies the potential-well analysis for pure-power, logarithmic, and related focusing sources.

The stable asymptotic problem contains an additional difficulty. The energy identity gives boundedness and finite total MGT dissipation, but it does not compactify the undamped wave component. At the linear level, we prove that the semigroup is strongly stable for nonzero coupling. This stability is nevertheless nonuniform: the wave branch has eigenvalues
\[
 \operatorname{Re}s_{k,\pm}\sim-\frac{\alpha^2}{2(b-\tau)\lambda_k^2},
\]
and the linearized flow is not compact at any fixed positive time; see also \cite{HaLogMGT}. The nonlinear stable analysis therefore proceeds in two stages. First, for nonzero coupling, every stable solution converges weakly to zero without a compactness hypothesis. Second, a renormalized high-frequency balance identifies the exact obstruction to strong compactness: within the stable class, vanishing of the accumulated nonlinear high--low flux is equivalent to relative compactness of the orbit and hence to strong convergence. Concrete sufficient conditions include a spectral finite-variation condition, the coordinate-free hypothesis
\[
 u_t\in L^1(0,\infty;H_{A_D}^{-s}(\Omega))
\]
for a source-dependent range of $s$, finite total variation of the nonlinear force $f(u)$ in $L^2(\Omega)$, and any eventual uniform bound with a positive fractional gain of spatial regularity. These are trajectory-level criteria, and the first-order energy identity alone does not provide the corresponding time-integrability or regularity.

At the regular level, the absence of direct wave damping remains decisive. Unlike a strongly damped wave component, the present equation supplies no a priori $L^2(0,T;H_0^1)$-control of $u_t$ and no positive-time smoothing. We nevertheless prove that regular stable data remain regular on every finite time interval. The resulting high-order estimate is generally exponential in time. If, in addition, the nonlinear force has finite total variation in $L^2(\Omega)$, then subtracting the correction $(f(u),A_Du)$ from the regular energy yields a uniform regular bound and square-integrability of $A_Dz$. Thus the same source-variation condition has two roles: at the energy level it implies the high-frequency flux condition, while for regular trajectories it gives a strictly stronger regularity upgrade.

\medskip
\noindent\textbf{Relation to the logarithmic wave--MGT study.}
The logarithmic specialization of \eqref{eq:original-system} was analyzed in \cite{HaLogMGT}, where the augmented formulation, the exact energy law, the logarithmic stable well, the leading wave-branch spectral obstruction, and convergence under orbit precompactness were established. The present paper takes that structure as its starting point and develops the dynamics generated by the general remainder $H_\theta$: a maximal local theory for arbitrary finite-energy data, strong stability of the linearized semigroup, weak asymptotic stability without compactness, an exact high-frequency characterization of strong convergence, source-variation and regularity upgrades, and the below-depth, critical-depth (for nonzero coupling), and prescribed-energy blow-up mechanisms. Thus \cite{HaLogMGT} is recovered on the common logarithmic stable branch, while Sections~\ref{sec:local}--\ref{sec:positive-blowup} develop the additional linear, asymptotic, regularity, and blow-up results.

The main conclusions are as follows.
\begin{enumerate}[label=\textup{(\roman*)},leftmargin=2.2em]
\item Every finite-energy initial state generates a unique maximal energy solution satisfying the exact identity \eqref{eq:intro-dissipation} and the standard continuation alternative.
\item For $0<\abs{\alpha}<\lambda_1$, the linearized semigroup is strongly stable. The wave-branch expansion rules out uniform exponential stability, and the semigroup is not compact at any positive time.
\item If $E(0)<d$ and $I(u_0,w_0)>0$, the stable set is positively invariant and the solution is global. For $0<\abs{\alpha}<\lambda_1$, every such trajectory converges weakly to zero. The accumulated high-frequency flux condition is equivalent to orbit precompactness and to strong convergence. It follows, in particular, from spectral finite variation, finite variation in a negative Dirichlet scale, or finite total variation of the nonlinear force; eventual positive fractional regularity gives another direct compactness mechanism.
\item If the stable initial state belongs to the regular phase space, the solution remains regular globally, with bounds on every finite time interval. Under finite total variation of $f(u)$ in $L^2(\Omega)$, the regular norm is uniformly bounded on $[0,\infty)$ and $A_Dz\in L^2(0,\infty;L^2(\Omega))$.
\item If $E(0)<d$ and $I(u_0,w_0)<0$, the unstable set is positively invariant and the maximal solution blows up in finite time, with no sign condition on the initial velocities. At the critical level $E(0)=d$, nonzero coupling yields a complete trichotomy into stable entry, finite-time blow-up, or a stationary Nehari ground state.
\item Independently of the Nehari threshold, a data-only outward-projection criterion yields finite-time blow-up at nonnegative energy for arbitrary initial MGT displacement. Every prescribed nonnegative energy shell contains blow-up data.
\item Pure-power, logarithmic, and positive finite sums of powers satisfy the abstract source hypotheses.
\end{enumerate}

The paper is organized as follows. Section~\ref{sec:source} introduces the source class and the augmented formulation. Section~\ref{sec:local} establishes local well-posedness, the exact energy identity, strong stability of the linearized semigroup, and the high-frequency obstruction to uniform exponential stability. Section~\ref{sec:well} develops the coupled potential well. Section~\ref{sec:stable} treats global stable dynamics, weak asymptotic stability, the high-frequency characterization, source-variation criteria, and the regularity upgrade. Section~\ref{sec:below-blowup} proves below-depth blow-up and the critical-level classification, while Section~\ref{sec:positive-blowup} establishes the positive-energy mechanism. Section~\ref{sec:examples} verifies the theory for power and logarithmic sources. Appendix~\ref{app:regular-estimates} contains the regular-phase composition, chain-rule, and Galerkin arguments.

\section{Source structure and augmented formulation}\label{sec:source}
We write $\norm{\cdot}_r$ for the norm of $L^r(\Omega)$ and $\ip{\cdot}{\cdot}$ for the inner product of $L^2(\Omega)$. Let $A_D=-\Delta$ denote the positive Dirichlet Laplacian on $L^2(\Omega)$, with
\[
 D(A_D)=H^2(\Omega)\cap H_0^1(\Omega).
\]
Set
\begin{equation}\label{eq:critical-exponents}
 2^*=\begin{cases}
 \infty,&n=1,2,\\[1mm]
 \dfrac{2n}{n-2},&n\ge3,
 \end{cases}
 \qquad
 p^*=\begin{cases}
 \infty,&n=1,2,\\[1mm]
 \dfrac{2}{n-2},&n\ge3.
 \end{cases}
\end{equation}

\subsection{Hypotheses on the source}
Throughout the general theory we impose the following assumptions.
\begin{description}[leftmargin=3.2em,labelindent=0em]
\item[\textup{(F1)}] $f\in C^1(\R)$, $f(0)=0$, and
\begin{equation}\label{eq:F1-small}
 \lim_{s\to0}\frac{f(s)}s=0.
\end{equation}
Moreover, there exist $C_f>0$ and $p\in(0,p^*)$ such that
\begin{equation}\label{eq:F1-growth}
 \abs{f'(s)}\le C_f\bigl(1+\abs{s}^p\bigr),\qquad s\in\R.
\end{equation}
\item[\textup{(F2)}] There exists $\theta>2$ such that, with
\begin{equation}\label{eq:Htheta-def}
 F(s)=\int_0^s f(r)\,\dd r,
 \qquad H_\theta(s)=\frac1\theta sf(s)-F(s),
\end{equation}
one has
\begin{equation}\label{eq:Htheta-positive}
 H_\theta(s)\ge0,
 \qquad s\in\R.
\end{equation}
\item[\textup{(F3)}] The structural remainder is radially nondecreasing:
\begin{equation}\label{eq:Htheta-radial}
 H_\theta(\lambda s)\le H_\theta(s),
 \qquad s\in\R,
 \quad 0\le\lambda\le1.
\end{equation}
\item[\textup{(F4)}] There exists $\phi\in H_0^1(\Omega)$ such that
\begin{equation}\label{eq:focusing}
 \int_\Omega F(\phi)\,\dd x>0.
\end{equation}
\end{description}
The restriction in \eqref{eq:F1-growth} is the natural one for the $L^2$-based uniqueness argument. For $n\ge3$, it is equivalent to
\begin{equation}\label{eq:subcritical-equivalent}
 2(p+1)<2^*,
\end{equation}
so the Nemytskii map induced by $f$ is locally Lipschitz from $H_0^1(\Omega)$ into $L^2(\Omega)$.

\begin{remark}[Use of the structural assumptions]\label{rem:use-assumptions}
Assumption \textup{(F1)} is sufficient for the local semigroup theory. The well depth and the stable branch use \textup{(F1)}, \textup{(F2)}, and \textup{(F4)}. Radial monotonicity \textup{(F3)} enters through the Nehari barrier for below-depth blow-up and the transversality argument at the critical well depth. The positive-energy criterion requires only \textup{(F1)} and \textup{(F2)}, while the prescribed-level construction additionally uses \textup{(F4)}.
\end{remark}

\begin{remark}\label{rem:sufficient-H}
Under \textup{(F1)}, radial monotonicity \textup{(F3)} is equivalent to
\begin{equation}\label{eq:Htheta-differential}
 s^2f'(s)\ge(\theta-1)sf(s),
 \qquad s\in\R.
\end{equation}
Indeed,
\[
 \frac{\dd}{\dd\lambda}H_\theta(\lambda s)
 =\frac{(\lambda s)^2f'(\lambda s)-(\theta-1)(\lambda s)f(\lambda s)}{\theta\lambda},
 \qquad \lambda>0.
\]
Thus \eqref{eq:Htheta-differential} implies that $\lambda\mapsto H_\theta(\lambda s)$ is nondecreasing. Conversely, \textup{(F3)} makes this map nondecreasing on every radial segment, and its left derivative at $\lambda=1$ yields \eqref{eq:Htheta-differential}. Moreover, since $H_\theta(0)=0$, condition \textup{(F3)} implies \textup{(F2)}. We retain \textup{(F2)} separately because several results below use only the nonnegativity of $H_\theta$, without radial monotonicity. Both the pure-power and logarithmic sources satisfy \eqref{eq:Htheta-differential}; see Section~\ref{sec:examples}.
\end{remark}

\begin{lemma}[Subcritical source estimates]\label{lem:source-estimates}
Assume \textup{(F1)}. For every $\varepsilon>0$ there exists $C_\varepsilon>0$ such that
\begin{align}
 \abs{f(s)}&\le\varepsilon\abs{s}+C_\varepsilon\abs{s}^{p+1},\label{eq:f-eps}\\
 \abs{sf(s)}+\abs{F(s)}&\le\varepsilon\abs{s}^2+C_\varepsilon\abs{s}^{p+2}\label{eq:F-eps}
\end{align}
for all $s\in\R$. Moreover,
\begin{equation}\label{eq:f-difference-pointwise}
 \abs{f(a)-f(b)}\le C\bigl(1+\abs{a}^p+\abs{b}^p\bigr)\abs{a-b},
 \qquad a,b\in\R.
\end{equation}
Consequently, for every $R>0$ there exists $C_R>0$ such that
\begin{equation}\label{eq:f-local-Lipschitz}
 \norm{f(u)-f(v)}_2\le C_R\norm{\nabla(u-v)}_2
\end{equation}
whenever $u,v\in H_0^1(\Omega)$ and $\norm{\nabla u}_2+\norm{\nabla v}_2\le R$.
\end{lemma}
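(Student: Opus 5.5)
The plan is to prove the four estimates one after another, working from the pointwise bounds up to the Nemytskii estimate. Throughout, $C$ depends only on $C_f$ and $p$.

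\emph{Step 1: the bound \eqref{eq:f-eps}.} Fix $\varepsilon>0$. Since $f(0)=0$, \eqref{eq:F1-small} gives a $\delta=\delta(\varepsilon)>0$ with $\abs{f(s)}\le\varepsilon\abs{s}$ for $\abs{s}\le\delta$. For $\abs{s}\ge\delta$ we use the mean value theorem together with \eqref{eq:F1-growth}:
\[
\abs{f(s)}\le C_f\bigl(1+\abs{s}^p\bigr)\abs{s}\le C_f\bigl(\delta^{-p}+1\bigr)\abs{s}^{p+1}.
\]
Taking $C_\varepsilon=C_f(\delta^{-p}+1)$ gives \eqref{eq:f-eps} for all $s$.

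\emph{Step 2: the bound \eqref{eq:F-eps}.} Multiplying \eqref{eq:f-eps} by $\abs{s}$ bounds $\abs{sf(s)}$. Integrating \eqref{eq:f-eps} from $0$ to $s$ gives
\[
\abs{F(s)}\le\tfrac{\varepsilon}{2}\abs{s}^2+\tfrac{C_\varepsilon}{p+2}\abs{s}^{p+2}.
\]
Adding the two bounds and replacing $\varepsilon$ by $\varepsilon/2$ yields \eqref{eq:F-eps}.

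\emph{Step 3: the pointwise difference bound \eqref{eq:f-difference-pointwise}.} We write
\[
f(a)-f(b)=(a-b)\int_0^1 f'\bigl(b+\sigma(a-b)\bigr)\dd\sigma.
\]
By convexity of $r\mapsto r^p$ when $p\ge1$, or by subadditivity when $p<1$, we have $\abs{b+\sigma(a-b)}^p\le\max(1,2^{p-1})\bigl(\abs{a}^p+\abs{b}^p\bigr)$. Inserting this into \eqref{eq:F1-growth} gives \eqref{eq:f-difference-pointwise}.

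\emph{Step 4: the Nemytskii estimate \eqref{eq:f-local-Lipschitz}.} This is the only step where some care is needed: we must choose Hölder exponents that stay within the Sobolev range. Squaring \eqref{eq:f-difference-pointwise} and integrating gives
\[
\norm{f(u)-f(v)}_2\le C\Bigl(\norm{u-v}_2+\bigl\|(\abs{u}^p+\abs{v}^p)\abs{u-v}\bigr\|_2\Bigr).
\]
The first term is bounded by $\lambda_1^{-1/2}\norm{\nabla(u-v)}_2$ by Poincaré. For the second term, when $n\ge3$ we apply Hölder with exponents $(p+1)/p$ and $p+1$ to the squared integrand:
\[
\bigl\|\abs{u}^p\abs{u-v}\bigr\|_2\le\norm{u}_{2(p+1)}^p\,\norm{u-v}_{2(p+1)}.
\]
By \eqref{eq:subcritical-equivalent} we have $2(p+1)<2^*$, so the embedding $H_0^1(\Omega)\hookrightarrow L^{2(p+1)}(\Omega)$ holds on the bounded domain. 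This gives the bound $C\norm{\nabla u}_2^p\norm{\nabla(u-v)}_2$, and likewise for $v$. For $n=1,2$, $H_0^1$ embeds into every $L^q$ with $q<\infty$, so the same argument works directly. Collecting terms yields $C_R=C\bigl(\lambda_1^{-1/2}+2R^p\bigr)$ under $\norm{\nabla u}_2+\norm{\nabla v}_2\le R$.
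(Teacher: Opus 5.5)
Your proposal is correct and follows essentially the same route as the paper. Both use smallness near the origin from \eqref{eq:F1-small} with absorption of the linear term away from it, integration and multiplication by $\abs{s}$ for $F$ and $sf(s)$, and the mean-value theorem for \eqref{eq:f-difference-pointwise}. Both then prove \eqref{eq:f-local-Lipschitz} by H\"older with exponents $2(p+1)/p$ and $2(p+1)$ (equivalently, your $(p+1)/p$ and $p+1$ on the squared integrand), the embedding $H_0^1(\Omega)\hookrightarrow L^{2(p+1)}(\Omega)$, and Poincar\'e for the constant term.

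One cosmetic correction: in Step 4 your constant also depends on $\Omega$ and $n$ through the Sobolev embedding, not only on $C_f$ and $p$.
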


\begin{proof}
The limit \eqref{eq:F1-small} gives $\abs{f(s)}\le\varepsilon\abs{s}$ for $\abs{s}$ sufficiently small. On the complementary region, integration of \eqref{eq:F1-growth} yields
\[
 \abs{f(s)}\le C\bigl(\abs{s}+\abs{s}^{p+1}\bigr),
\]
and the linear term can be absorbed into $C_\varepsilon\abs{s}^{p+1}$ because $p>0$ and $\abs{s}$ is bounded away from zero. This proves \eqref{eq:f-eps}; integration gives the estimate for $F$, and multiplication by $\abs{s}$ gives the estimate for $sf(s)$. The mean-value theorem and \eqref{eq:F1-growth} imply \eqref{eq:f-difference-pointwise}.

For \eqref{eq:f-local-Lipschitz}, use H\"older's inequality with exponents $2(p+1)/p$ and $2(p+1)$, followed by $H_0^1(\Omega)\hookrightarrow L^{2(p+1)}(\Omega)$. The constant term in \eqref{eq:f-difference-pointwise} is handled by Poincar\'e's inequality.
\end{proof}

\begin{lemma}[Differentiability of the source map]\label{lem:nemytskii-C1}
Assume \textup{(F1)}. The Nemytskii operator
\[
 f:H_0^1(\Omega)\longrightarrow L^2(\Omega),
 \qquad u\longmapsto f(u),
\]
is continuously Fr\'echet differentiable, with
\begin{equation}\label{eq:nemytskii-derivative}
 Df(u)h=f'(u)h,
 \qquad u,h\in H_0^1(\Omega).
\end{equation}
\end{lemma}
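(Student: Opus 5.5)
The plan is to verify Fréchet differentiability directly from the mean-value form of the remainder, and then obtain continuity of the derivative from the continuity of a Nemytskii operator between Lebesgue spaces.

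\emph{Boundedness of the candidate derivative.} Fix $u\in H_0^1(\Omega)$. By \eqref{eq:F1-growth}, $\abs{f'(u)}\le C_f(1+\abs{u}^p)$. Hölder's inequality with exponents $2(p+1)/p$ and $2(p+1)$, combined with $H_0^1(\Omega)\hookrightarrow L^{2(p+1)}(\Omega)$ (valid by \eqref{eq:subcritical-equivalent}), gives
\[
 \norm{f'(u)h}_2\le C\bigl(1+\norm{u}_{2(p+1)}^p\bigr)\norm{h}_{2(p+1)}\le C\bigl(1+\norm{\nabla u}_2^p\bigr)\norm{\nabla h}_2 .
\]
So $h\mapsto f'(u)h$ is a bounded linear map $H_0^1(\Omega)\to L^2(\Omega)$. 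The constant term is handled by Poincaré's inequality, exactly as in Lemma~\ref{lem:source-estimates}.

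\emph{Remainder estimate.} Since $f\in C^1$, pointwise
\[
 f(u+h)-f(u)-f'(u)h=\int_0^1\bigl(f'(u+\sigma h)-f'(u)\bigr)\dd\sigma\,h .
\]
Applying the same Hölder split gives
\[
 \norm{f(u+h)-f(u)-f'(u)h}_2\le \sup_{\sigma\in[0,1]}\norm{f'(u+\sigma h)-f'(u)}_{q}\,\norm{h}_{2(p+1)},
 \qquad q=\frac{2(p+1)}{p}.
\]
Both differentiability and continuity of $Df$ therefore reduce to one claim: the Nemytskii map $G:v\mapsto f'(v)$ is continuous from $L^{2(p+1)}(\Omega)$ into $L^{q}(\Omega)$. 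Indeed, granting this, the supremum tends to $0$ as $\norm{\nabla h}_2\to0$, since $\norm{\sigma h}_{2(p+1)}\le C\norm{\nabla h}_2$ uniformly in $\sigma$. Moreover, $\norm{(Df(u)-Df(v))h}_2\le\norm{G(u)-G(v)}_q\,C\norm{\nabla h}_2$, which gives continuity of $u\mapsto Df(u)$ in operator norm.

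\emph{Continuity of $G$ (main obstacle).} Here $\abs{f'(v)}^q\le C(1+\abs{v}^{pq})$ and $pq=2(p+1)$, so $G$ maps $L^{2(p+1)}$ into $L^q$; the growth exponent matches exactly, which is the delicate point. I would use the Vitali/subsequence argument. Let $v_k\to v$ in $L^{2(p+1)}$. From any subsequence, extract a further subsequence with $v_k\to v$ a.e. and $\abs{v_k}\le g$ for some $g\in L^{2(p+1)}$ (partial converse of dominated convergence). Then $f'(v_k)\to f'(v)$ a.e. by continuity of $f'$, and
\[
 \abs{f'(v_k)-f'(v)}^q\le C\bigl(1+g^{2(p+1)}+\abs{v}^{2(p+1)}\bigr)\in L^1(\Omega),
\]
using that $\Omega$ is bounded. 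Dominated convergence gives $G(v_k)\to G(v)$ in $L^q$ along the sub-subsequence. The limit is independent of the subsequence, so the whole sequence converges. This yields \eqref{eq:nemytskii-derivative} together with the $C^1$ property.
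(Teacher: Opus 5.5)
Your proposal is correct and follows the paper's proof essentially step for step: the same H\"older split with exponents $2(p+1)/p$ and $2(p+1)$, the same mean-value form of the remainder, and the same reduction to continuity of $v\mapsto f'(v)$ from $L^{2(p+1)}(\Omega)$ into $L^{2(p+1)/p}(\Omega)$. The only difference is that you prove this Nemytskii continuity by the sub-subsequence and dominated-convergence argument, whereas the paper uses convergence in measure together with Vitali's theorem; these are equivalent standard routes.
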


\begin{proof}
Set $r=2(p+1)$ and $s_p=2(p+1)/p$. By \eqref{eq:subcritical-equivalent},
$H_0^1(\Omega)\hookrightarrow L^r(\Omega)$. H\"older's inequality and
\eqref{eq:F1-growth} give
\[
 \norm{f'(u)h}_2
 \le \norm{f'(u)}_{s_p}\norm{h}_r
 \le C\bigl(1+\norm{u}_r^p\bigr)\norm{h}_r.
\]
Thus $h\mapsto f'(u)h$ belongs to
$\mathcal L(H_0^1(\Omega),L^2(\Omega))$.

Let $u_j\to u$ in $H_0^1(\Omega)$. Then $u_j\to u$ in $L^r(\Omega)$ and,
in particular, in measure. Since $f'$ is continuous,
$f'(u_j)\to f'(u)$ in measure. Moreover, using $ps_p=r$ and
\eqref{eq:F1-growth},
\[
 \abs{f'(u_j)-f'(u)}^{s_p}
 \le C\bigl(1+\abs{u_j}^{r}+\abs{u}^{r}\bigr).
\]
The family on the right-hand side is uniformly integrable. Indeed,
$u_j\to u$ in $L^r(\Omega)$ implies
$\abs{u_j}^{r}\to\abs{u}^{r}$ in $L^1(\Omega)$. Vitali's theorem therefore
yields
\[
 f'(u_j)\to f'(u)
 \qquad\text{strongly in }L^{s_p}(\Omega).
\]
Consequently,
\[
 \norm{(f'(u_j)-f'(u))h}_2
 \le C\norm{f'(u_j)-f'(u)}_{s_p}\norm{h}_{H_0^1},
\]
so the multiplication operators $h\mapsto f'(u_j)h$ converge to
$h\mapsto f'(u)h$ in
$\mathcal L(H_0^1(\Omega),L^2(\Omega))$.

Finally,
\[
 f(u+h)-f(u)-f'(u)h
 =\int_0^1\bigl[f'(u+\sigma h)-f'(u)\bigr]h\,\dd\sigma.
\]
The operator-norm continuity established above gives
\[
 \norm{f(u+h)-f(u)-f'(u)h}_2
 \le \sup_{0\le\sigma\le1}
 \norm{f'(u+\sigma h)-f'(u)}_{s_p}\norm{h}_r
 =o\bigl(\norm{h}_{H_0^1}\bigr).
\]
Hence the Nemytskii map is continuously Fr\'echet differentiable and
\eqref{eq:nemytskii-derivative} holds.
\end{proof}

\begin{lemma}[Scaling of the potential]\label{lem:potential-scaling}
Assume \textup{(F2)}. Then
\begin{equation}\label{eq:F-scaling}
 F(\lambda s)\ge\lambda^\theta F(s),
 \qquad s\in\R,\quad \lambda\ge1.
\end{equation}
In particular,
\begin{equation}\label{eq:AR-like}
 sf(s)\ge\theta F(s),
 \qquad s\in\R.
\end{equation}
\end{lemma}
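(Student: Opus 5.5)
The inequality \eqref{eq:AR-like} is simply \textup{(F2)} rewritten: $H_\theta(s)\ge0$ means $\frac1\theta sf(s)-F(s)\ge0$, that is, $sf(s)\ge\theta F(s)$ for every $s\in\R$. So the real content is the scaling inequality \eqref{eq:F-scaling}. I plan to obtain it by a Gronwall-type argument along rays, applied to the function
\[
 g(\lambda)=\lambda^{-\theta}F(\lambda s),\qquad \lambda\ge1,
\]
with $s\in\R$ fixed.

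\textbf{Step 1 (differentiability).} By \textup{(F1)}, $F\in C^2(\R)$, so $g$ is $C^1$ on $[1,\infty)$. The chain rule gives
\[
 g'(\lambda)=\lambda^{-\theta-1}\bigl[(\lambda s)f(\lambda s)-\theta F(\lambda s)\bigr].
\]

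\textbf{Step 2 (sign of the derivative).} The bracket equals $\theta H_\theta(\lambda s)$. By \textup{(F2)} this is nonnegative, so $g'(\lambda)\ge0$ for all $\lambda\ge1$.

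\textbf{Step 3 (conclusion).} Since $g$ is nondecreasing, $g(\lambda)\ge g(1)=F(s)$ for $\lambda\ge1$. Multiplying by $\lambda^\theta>0$ gives $F(\lambda s)\ge\lambda^\theta F(s)$. The case $s=0$ is trivial because $F(0)=0$.

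I do not expect any real obstacle. The only points to check are that $F$ is differentiable, which holds since $f$ is continuous, and that no sign assumption on $s$ or on $F(s)$ is used. Both hold here: the argument uses only the pointwise inequality $H_\theta\ge0$ and works equally for $F(s)$ of either sign and for negative $s$.
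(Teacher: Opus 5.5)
Your proposal is correct and follows the paper's proof: you differentiate $\lambda^{-\theta}F(\lambda s)$, recognize the derivative as $\theta\lambda^{-\theta-1}H_\theta(\lambda s)\ge0$, and integrate from $1$ to $\lambda$, with \eqref{eq:AR-like} read off directly from \textup{(F2)}. One small remark: the argument only needs $f$ continuous so that $F\in C^1$, not $F\in C^2$, as you yourself note at the end.
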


\begin{proof}
The second assertion is exactly \eqref{eq:Htheta-positive}. For fixed $s\in\R$,
\[
 \frac{\dd}{\dd\lambda}\bigl[\lambda^{-\theta}F(\lambda s)\bigr]
 =\theta\lambda^{-\theta-1}H_\theta(\lambda s)\ge0.
\]
Integration from $1$ to $\lambda\ge1$ gives \eqref{eq:F-scaling}.
\end{proof}

\subsection{The augmented phase space}
Set
\begin{equation}\label{eq:Dzw}
 D=b-\tau>0,
 \qquad z=v_t,
 \qquad w=v+\tau v_t.
\end{equation}
Then \eqref{eq:original-system} is equivalent to
\begin{equation}\label{eq:augmented-system}
\begin{cases}
 u_{tt}-\Delta u+\alpha w=f(u),\\
 w_{tt}-\Delta w-D\Delta z+\alpha u=0,\\
 \tau z_t+z=w_t,
\end{cases}
\end{equation}
with homogeneous Dirichlet conditions for $u,w,z$. The natural phase space is
\begin{equation}\label{eq:phase-space}
 \Hcal=H_0^1(\Omega)\times L^2(\Omega)\times H_0^1(\Omega)\times L^2(\Omega)\times H_0^1(\Omega),
\end{equation}
with state
\begin{equation}\label{eq:state}
 Y(t)=\bigl(u(t),u_t(t),w(t),w_t(t),z(t)\bigr).
\end{equation}
For initial data in the original variables,
\begin{equation}\label{eq:transformed-data}
 w_0=v_0+\tau v_1,
 \qquad w_1=v_1+\tau v_2,
 \qquad z_0=v_1.
\end{equation}
Conversely,
\begin{equation}\label{eq:recover-v}
 v=w-\tau z,
 \qquad v_t=z.
\end{equation}

Define
\begin{equation}\label{eq:Qalpha}
 Q_\alpha(u,w)=\norm{\nabla u}_2^2+\norm{\nabla w}_2^2+2\alpha\ip{u}{w}.
\end{equation}

\begin{lemma}[Coercivity of the coupled form]\label{lem:Q-coercive}
Under \eqref{eq:parameter-assumptions},
\begin{equation}\label{eq:Q-coercive}
 Q_\alpha(u,w)\ge c_\alpha\bigl(\norm{\nabla u}_2^2+\norm{\nabla w}_2^2\bigr),
 \qquad c_\alpha=1-\frac{\abs{\alpha}}{\lambda_1}>0.
\end{equation}
\end{lemma}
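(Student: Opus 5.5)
The plan is to bound only the cross term $2\alpha\ip{u}{w}$, using nothing beyond Cauchy--Schwarz, Young's inequality and the variational characterization of $\lambda_1$. For $u\in H_0^1(\Omega)$ the Poincar\'e inequality in sharp form reads $\lambda_1\norm{u}_2^2\le\norm{\nabla u}_2^2$, and likewise for $w$.

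First apply Cauchy--Schwarz and then $2ab\le a^2+b^2$:
\[
 2\abs{\alpha}\,\abs{\ip{u}{w}}\le 2\abs{\alpha}\norm{u}_2\norm{w}_2\le \abs{\alpha}\bigl(\norm{u}_2^2+\norm{w}_2^2\bigr).
\]
Next use the Poincar\'e inequality to get
\[
 \abs{\alpha}\bigl(\norm{u}_2^2+\norm{w}_2^2\bigr)\le \frac{\abs{\alpha}}{\lambda_1}\bigl(\norm{\nabla u}_2^2+\norm{\nabla w}_2^2\bigr).
\]
Combining the two bounds gives
\[
 Q_\alpha(u,w)\ge \norm{\nabla u}_2^2+\norm{\nabla w}_2^2-2\abs{\alpha}\abs{\ip{u}{w}}\ge\Bigl(1-\frac{\abs{\alpha}}{\lambda_1}\Bigr)\bigl(\norm{\nabla u}_2^2+\norm{\nabla w}_2^2\bigr),
\]
which is \eqref{eq:Q-coercive}.

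Positivity of $c_\alpha$ is exactly the assumption $\abs{\alpha}<\lambda_1$ in \eqref{eq:parameter-assumptions}. There is no real obstacle in this argument. The only point to check is that the symmetric Young splitting is optimal, which it is: on the first eigenfunction pair $u=-\operatorname{sgn}(\alpha)w=\varphi_1$ the constant $c_\alpha$ is attained. This matches the modal stiffness matrix discussed in the introduction, whose smallest eigenvalue is $\lambda_k-\abs{\alpha}\ge\lambda_k(1-\abs{\alpha}/\lambda_1)$.
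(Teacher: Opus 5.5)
Your proof is correct and follows the paper's argument exactly. You bound the cross term by Cauchy--Schwarz and Young to get $\abs{\alpha}\bigl(\norm{u}_2^2+\norm{w}_2^2\bigr)$, then apply the Poincar\'e inequality with sharp constant $\lambda_1$; your extra sharpness remark (equality for $u=\varphi_1$, $w=-\operatorname{sgn}(\alpha)\varphi_1$ when $\alpha\ne0$) is also correct, though not needed.
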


\begin{proof}
Young's and Poincar\'e's inequalities give
\[
 2\abs{\alpha}\abs{\ip{u}{w}}
 \le\abs{\alpha}\bigl(\norm{u}_2^2+\norm{w}_2^2\bigr)
 \le\frac{\abs{\alpha}}{\lambda_1}
 \bigl(\norm{\nabla u}_2^2+\norm{\nabla w}_2^2\bigr).
\]
\end{proof}

For $Y=(u,p,w,q,z)\in\Hcal$, define
\begin{equation}\label{eq:phase-energy}
 E(Y)=\frac12\norm{p}_2^2+\frac12\norm{q}_2^2+\frac{\tau D}{2}\norm{\nabla z}_2^2
 +\frac12Q_\alpha(u,w)-\int_\Omega F(u)\,\dd x.
\end{equation}
The static functionals are
\begin{equation}\label{eq:J-I}
 J(u,w)=\frac12Q_\alpha(u,w)-\int_\Omega F(u)\,\dd x,
 \qquad
 I(u,w)=Q_\alpha(u,w)-\int_\Omega uf(u)\,\dd x.
\end{equation}

Set $\Phi(u)=\int_\Omega F(u)\,\dd x$. By Lemma~\ref{lem:nemytskii-C1} and the fundamental theorem of calculus along line segments,
\begin{equation}\label{eq:Phi-derivative}
 \Phi\in C^1\bigl(H_0^1(\Omega);\R\bigr),
 \qquad \Phi'(u)h=\ip{f(u)}{h}.
\end{equation}
The continuity of the $L^2$-pairing then gives
\[
 J,I\in C^1\bigl(H_0^1(\Omega)^2;\R\bigr).
\]
More precisely, for $(u,w),(h,k)\in H_0^1(\Omega)^2$,
\begin{align}
 \dual{J'(u,w)}{(h,k)}
 ={}&\ip{\nabla u}{\nabla h}+\ip{\nabla w}{\nabla k}
 +\alpha\bigl(\ip{h}{w}+\ip{u}{k}\bigr)-\ip{f(u)}{h},
 \label{eq:J-derivative}\\
 \dual{I'(u,w)}{(h,k)}
 ={}&2\ip{\nabla u}{\nabla h}+2\ip{\nabla w}{\nabla k}
 +2\alpha\bigl(\ip{h}{w}+\ip{u}{k}\bigr)\notag\\
 &-\int_\Omega\bigl(f(u)+uf'(u)\bigr)h\,\dd x.
 \label{eq:I-derivative}
\end{align}
In particular,
\begin{equation}\label{eq:J-radial-derivative}
 \dual{J'(u,w)}{(u,w)}=I(u,w).
\end{equation}

\begin{lemma}[Structural identity]\label{lem:structural-identity}
For every $(u,w)\in H_0^1(\Omega)\times H_0^1(\Omega)$,
\begin{equation}\label{eq:structural-identity}
 J(u,w)=\frac{\theta-2}{2\theta}Q_\alpha(u,w)+\frac1\theta I(u,w)
 +\int_\Omega H_\theta(u)\,\dd x.
\end{equation}
\end{lemma}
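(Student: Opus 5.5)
This is a direct algebraic verification: expand the right-hand side of \eqref{eq:structural-identity} using the definitions \eqref{eq:J-I} of $I$ and \eqref{eq:Htheta-def} of $H_\theta$, and check that it collapses to $J(u,w)$. No analytic input is needed beyond the integrability of the terms involved. For $u\in H_0^1(\Omega)$, the functions $uf(u)$ and $F(u)$ lie in $L^1(\Omega)$ by \eqref{eq:F-eps} of Lemma~\ref{lem:source-estimates} and the embedding $H_0^1(\Omega)\hookrightarrow L^{p+2}(\Omega)$, which holds since $p+2<2(p+1)<2^*$. Hence every integral below is finite and linearity of the integral may be used freely.

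First substitute $I(u,w)=Q_\alpha(u,w)-\int_\Omega uf(u)\,\dd x$, which gives
\[
 \frac1\theta I(u,w)=\frac1\theta Q_\alpha(u,w)-\frac1\theta\int_\Omega uf(u)\,\dd x.
\]
Next, integrating the pointwise definition of $H_\theta$ gives
\[
 \int_\Omega H_\theta(u)\,\dd x=\frac1\theta\int_\Omega uf(u)\,\dd x-\int_\Omega F(u)\,\dd x.
\]
Adding these two expressions, the $\frac1\theta\int_\Omega uf(u)\,\dd x$ terms cancel. What remains is $\frac1\theta Q_\alpha(u,w)-\int_\Omega F(u)\,\dd x$.

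Finally, add $\frac{\theta-2}{2\theta}Q_\alpha(u,w)$. The coefficients of $Q_\alpha$ combine to
\[
 \frac{\theta-2}{2\theta}+\frac{1}{\theta}=\frac{\theta-2+2}{2\theta}=\frac12.
\]
The right-hand side therefore equals $\frac12Q_\alpha(u,w)-\int_\Omega F(u)\,\dd x=J(u,w)$, which is \eqref{eq:structural-identity}.

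There is no real obstacle here. The only point needing care is the integrability justification above, which ensures that splitting the integrals is legitimate. Note also that (F2) is not used: the identity holds for any $\theta>2$, indeed for any $\theta\neq0$, and the sign condition on $H_\theta$ is only needed later when the identity is applied.
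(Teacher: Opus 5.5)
Your proposal is correct and is essentially the paper's argument: the paper substitutes $F(s)=\theta^{-1}sf(s)-H_\theta(s)$ into the definition of $J$ and collects terms, while you expand the right-hand side and collapse it to $J$. This is the same one-line algebraic identity run in the opposite direction. Your integrability remark via \eqref{eq:F-eps} and $H_0^1(\Omega)\hookrightarrow L^{p+2}(\Omega)$ correctly justifies splitting the integrals.
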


\begin{proof}
Use $F(s)=\theta^{-1}sf(s)-H_\theta(s)$ in \eqref{eq:J-I} and collect terms.
\end{proof}

\section{Local well-posedness, exact energy identity, and linear stability}\label{sec:local}
We first identify the linear augmented dynamics as a dissipative semigroup. On $\Hcal$, introduce
\begin{align}\label{eq:alpha-inner-product}
 \dual{Y}{\widetilde Y}_\alpha
 :={}&\ip{p}{\widetilde p}+\ip{q}{\widetilde q}
 +\tau D\ip{\nabla z}{\nabla\widetilde z}
 +\ip{\nabla u}{\nabla\widetilde u}+\ip{\nabla w}{\nabla\widetilde w}\notag\\
 &+\alpha\bigl(\ip{u}{\widetilde w}+\ip{w}{\widetilde u}\bigr),
\end{align}
where $Y=(u,p,w,q,z)$ and $\widetilde Y=(\widetilde u,\widetilde p,\widetilde w,\widetilde q,\widetilde z)$. Lemma~\ref{lem:Q-coercive} implies that \eqref{eq:alpha-inner-product} is an inner product whose norm is equivalent to the standard norm of $\Hcal$.

Define $\Acal:D(\Acal)\subset\Hcal\to\Hcal$ by
\begin{equation}\label{eq:linear-generator}
 \Acal
 \begin{pmatrix}u\\p\\w\\q\\z\end{pmatrix}
 =
 \begin{pmatrix}
 p\\
 \Delta u-\alpha w\\
 q\\
 \Delta(w+Dz)-\alpha u\\
 \tau^{-1}(q-z)
 \end{pmatrix},
\end{equation}
with
\begin{align}\label{eq:generator-domain}
 D(\Acal)=\bigl\{&(u,p,w,q,z)\in\Hcal:
 p,q\in H_0^1(\Omega),\ u\in H^2(\Omega)\cap H_0^1(\Omega),\notag\\
 &\hspace{37mm}w+Dz\in H^2(\Omega)\cap H_0^1(\Omega)\bigr\}.
\end{align}
Equivalently, one may require $\Delta u,\Delta(w+Dz)\in L^2(\Omega)$ in the distributional sense. The domain condition is imposed on the combination $w+Dz$, rather than separately on $w$ and $z$, because the fourth component of $\Acal Y$ contains only $\Delta(w+Dz)$. Thus no separate $H^2$-regularity of $w$ or $z$ is required at the level of the generator domain.

\begin{proposition}[Linear semigroup realization]\label{prop:linear-semigroup}
Assume \eqref{eq:parameter-assumptions}. The operator $\Acal$ is maximal dissipative on $(\Hcal,\dual{\cdot}{\cdot}_\alpha)$ and generates a contraction $C_0$-semigroup $\{e^{t\Acal}\}_{t\ge0}$. Moreover,
\begin{equation}\label{eq:generator-dissipative}
 \dual{\Acal Y}{Y}_\alpha=-D\norm{\nabla z}_2^2,
 \qquad Y\in D(\Acal).
\end{equation}
\end{proposition}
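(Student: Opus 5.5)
The plan is to apply the Lumer--Phillips theorem on the Hilbert space $(\Hcal,\dual{\cdot}{\cdot}_\alpha)$. By Lemma~\ref{lem:Q-coercive} this inner product is equivalent to the standard one, so it suffices to check three things: $D(\Acal)$ is dense, $\Acal$ is dissipative (through the identity \eqref{eq:generator-dissipative}), and $I-\Acal$ is onto $\Hcal$. Density is standard, because $D(\Acal)$ contains $(H^2\cap H_0^1)^5$, which is dense in $\Hcal$.

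\textbf{Dissipativity.} Take $Y=(u,p,w,q,z)\in D(\Acal)$ and expand $\dual{\Acal Y}{Y}_\alpha$ term by term (real spaces; the complex case is analogous, taking real parts).
\begin{itemize}
\item The $p$-component gives $\ip{\Delta u-\alpha w}{p}=-\ip{\nabla u}{\nabla p}-\alpha\ip{w}{p}$.
\item The $q$-component gives $\ip{\Delta(w+Dz)-\alpha u}{q}=-\ip{\nabla w}{\nabla q}-D\ip{\nabla z}{\nabla q}-\alpha\ip{u}{q}$. Integration by parts is legitimate since $w+Dz\in H^2\cap H_0^1$ and $q\in H_0^1$.
\item The $z$-component gives $\tau D\ip{\nabla \tau^{-1}(q-z)}{\nabla z}=D\ip{\nabla q}{\nabla z}-D\norm{\nabla z}_2^2$. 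This requires $\tau^{-1}(q-z)\in H_0^1$, which holds because $q,z\in H_0^1$.
\item The static part gives $\ip{\nabla p}{\nabla u}+\ip{\nabla q}{\nabla w}+\alpha(\ip{p}{w}+\ip{q}{u})$.
\end{itemize}
Summing, every cross term cancels: the $\ip{\nabla u}{\nabla p}$, $\ip{\nabla w}{\nabla q}$ and $\alpha$-terms cancel against the static part, and $\pm D\ip{\nabla z}{\nabla q}$ cancel. What remains is exactly $-D\norm{\nabla z}_2^2\le0$.

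\textbf{Range condition (main step).} Given $G=(g_1,\dots,g_5)\in\Hcal$, we solve $Y-\Acal Y=G$ as follows.
\begin{itemize}
\item The equations give $p=u-g_1$, $q=w-g_3$ and $z=(1+\tau)^{-1}\bigl(\tau g_5+q\bigr)=(1+\tau)^{-1}(w-g_3+\tau g_5)$.
\item Set $\kappa=1+D/(1+\tau)$. Substituting into the second and fourth equations yields the elliptic system
\[
u-\Delta u+\alpha w=g_1+g_2,\qquad
w-\kappa\Delta w+\alpha u=g_3+g_4-\tfrac{D}{1+\tau}\Delta(\tau g_5-g_3).
\]
\item The right-hand sides lie in $H^{-1}$, since $g_3,g_5\in H_0^1$.
\item The associated bilinear form on $H_0^1\times H_0^1$ is
\[
a((u,w),(\phi,\psi))=\ip{u}{\phi}+\ip{\nabla u}{\nabla\phi}+\ip{w}{\psi}+\kappa\ip{\nabla w}{\nabla\psi}+\alpha\ip{w}{\phi}+\alpha\ip{u}{\psi}.
\]
Since $\kappa\ge1$, Lemma~\ref{lem:Q-coercive} makes it coercive. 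Lax--Milgram then gives a unique $(u,w)\in (H_0^1)^2$, and this defines $p,q,z\in H_0^1$.
\end{itemize}

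\textbf{Domain membership.} We must still check that the solution lies in $D(\Acal)$.
\begin{itemize}
\item From the first equation, $\Delta u=u+\alpha w-g_1-g_2\in L^2$, so elliptic regularity on the $C^2$ domain gives $u\in H^2\cap H_0^1$.
\item For the second condition, rewrite the fourth equation directly as $\Delta(w+Dz)=q+\alpha u-g_4\in L^2$. Since $w+Dz\in H_0^1$, this gives $w+Dz\in H^2\cap H_0^1$.
\end{itemize}
This last point is the delicate one, since neither $w$ nor $z$ need lie in $H^2$ separately. It is exactly why the domain is defined through the combination $w+Dz$.

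With density, dissipativity and surjectivity of $I-\Acal$ in hand, Lumer--Phillips gives that $\Acal$ is maximal dissipative and generates a contraction $C_0$-semigroup on $(\Hcal,\dual{\cdot}{\cdot}_\alpha)$.
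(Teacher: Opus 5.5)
Your proof is correct and follows the paper's argument: the dissipation identity by integration by parts, Lax--Milgram for the reduced coercive system in $(u,w)$, elliptic regularity for $u$ and for the combination $w+Dz$, and Lumer--Phillips. The differences are only that you fix $\lambda=1$ and check density via $(H^2\cap H_0^1)^5\subset D(\Acal)$ instead of spectral truncation. One small sign slip: substituting $z=(1+\tau)^{-1}(w-g_3+\tau g_5)$ gives the right-hand side $g_3+g_4+\frac{D}{1+\tau}\Delta(\tau g_5-g_3)$, with a plus rather than a minus; this is the sign consistent with your later identity $\Delta(w+Dz)=q+\alpha u-g_4$, and it does not affect the Lax--Milgram step.
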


\begin{proof}
The operator is the linear augmented generator used in \cite{HaLogMGT}; we recall the short generation argument in the present notation. Density of $D(\Acal)$ follows from Dirichlet spectral truncation applied to $u$, $z$, the combination $w+Dz$, and the velocity components. Integration by parts gives \eqref{eq:generator-dissipative}, so $\Acal$ is dissipative.

Fix $\lambda>0$ and $G=(g_1,g_2,g_3,g_4,g_5)\in\Hcal$. Solving $(\lambda I-\Acal)Y=G$ for $p,q,z$ gives
\begin{equation}\label{eq:resolvent-pqz}
 p=\lambda u-g_1,
 \qquad q=\lambda w-g_3,
 \qquad z=c_\lambda w+h_\lambda,
\end{equation}
where
\begin{equation}\label{eq:clambda-hlambda}
 c_\lambda=\frac{\lambda}{1+\tau\lambda},
 \qquad h_\lambda=\frac{\tau g_5-g_3}{1+\tau\lambda}\in H_0^1(\Omega).
\end{equation}
The remaining equations are equivalent to
\begin{align}\label{eq:resolvent-variational}
 &\ip{\nabla u}{\nabla\varphi}+(1+Dc_\lambda)\ip{\nabla w}{\nabla\psi}
 +\lambda^2\ip{u}{\varphi}+\lambda^2\ip{w}{\psi}
 +\alpha\bigl(\ip{w}{\varphi}+\ip{u}{\psi}\bigr)\notag\\
 &\qquad=\ip{g_2+\lambda g_1}{\varphi}+\ip{g_4+\lambda g_3}{\psi}
 -D\ip{\nabla h_\lambda}{\nabla\psi}
\end{align}
for all $(\varphi,\psi)\in H_0^1(\Omega)^2$. Since $1+Dc_\lambda\ge1$, Lemma~\ref{lem:Q-coercive} makes the left-hand side coercive. Lax--Milgram therefore gives a unique $(u,w)$; the resolvent equations yield $\Delta u,\Delta(w+Dz)\in L^2(\Omega)$ and hence $Y\in D(\Acal)$. Thus $\operatorname{Ran}(\lambda I-\Acal)=\Hcal$, and the Lumer--Phillips theorem completes the proof.
\end{proof}

Let
\begin{equation}\label{eq:nonlinear-G}
 \mathcal G(u,p,w,q,z)=(0,f(u),0,0,0).
\end{equation}
Lemmas~\ref{lem:source-estimates} and \ref{lem:nemytskii-C1} show that $\mathcal G:\Hcal\to\Hcal$ is locally Lipschitz and continuously Fr\'echet differentiable.

\begin{definition}[Energy solution]\label{def:energy-solution}
Let $T>0$ and $Y_0\in\Hcal$. An energy solution of \eqref{eq:augmented-system} on $[0,T]$ is a function $Y\in C([0,T];\Hcal)$ satisfying
\begin{equation}\label{eq:variation-constants}
 Y(t)=e^{t\Acal}Y_0+\int_0^t e^{(t-s)\Acal}\mathcal G(Y(s))\,\dd s,
 \qquad 0\le t\le T.
\end{equation}
\end{definition}
Every energy solution satisfies \eqref{eq:augmented-system} in distributions and
\begin{align}\label{eq:time-regularity}
 u,w&\in C([0,T];H_0^1)\cap C^1([0,T];L^2)\cap C^2([0,T];H^{-1}),\notag\\
 z&\in C([0,T];H_0^1)\cap C^1([0,T];L^2).
\end{align}

\begin{theorem}[Local well-posedness and continuation]\label{thm:local-wp}
Assume \eqref{eq:parameter-assumptions} and \textup{(F1)}. For every $Y_0\in\Hcal$, there exists a unique maximal energy solution on $[0,T_{\max})$, where $0<T_{\max}\le\infty$. The solution depends continuously on its initial state on every common compact interval of existence. Moreover, for every $0\le s\le t<T_{\max}$,
\begin{equation}\label{eq:exact-energy}
 E(t)+D\int_s^t\norm{\nabla z(\xi)}_2^2\,\dd\xi=E(s),
 \qquad E(t)=E(Y(t)),
\end{equation}
and
\begin{equation}\label{eq:continuation-alternative}
 T_{\max}<\infty
 \quad\Longrightarrow\quad
 \limsup_{t\uparrow T_{\max}}\norm{Y(t)}_{\Hcal}=\infty.
\end{equation}
In particular, $E\in C^1([0,T_{\max}))$ and
\begin{equation}\label{eq:Eprime}
 E'(t)=-D\norm{\nabla z(t)}_2^2.
\end{equation}
\end{theorem}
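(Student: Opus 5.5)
The plan is the standard semilinear semigroup argument applied to the mild formulation \eqref{eq:variation-constants}, followed by a regularization argument for the energy identity.

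\emph{Step 1: local existence, uniqueness and continuous dependence.} By Proposition~\ref{prop:linear-semigroup}, $e^{t\Acal}$ is a contraction semigroup on $(\Hcal,\dual{\cdot}{\cdot}_\alpha)$. By Lemmas~\ref{lem:source-estimates} and \ref{lem:nemytskii-C1}, $\mathcal G$ is locally Lipschitz on $\Hcal$. Fix $R>2\norm{Y_0}_\Hcal$ and consider the Banach fixed-point map $\Gamma Y(t)=e^{t\Acal}Y_0+\int_0^te^{(t-s)\Acal}\mathcal G(Y(s))\dd s$ on the ball of radius $R$ in $C([0,T];\Hcal)$. For $T$ depending only on $R$ (through $C_R$ in \eqref{eq:f-local-Lipschitz}), $\Gamma$ is a contraction. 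This gives local existence, with an existence time bounded below uniformly on bounded sets of data.

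Uniqueness on any common interval follows from Gronwall's inequality applied to the difference of two mild solutions. Continuous dependence on compact subintervals follows from the same Gronwall estimate, with a constant depending on the sup of the two solutions, combined with a standard continuation/compactness-of-interval argument. Gluing solutions gives the maximal solution on $[0,T_{\max})$. The alternative \eqref{eq:continuation-alternative} follows because the local existence time depends only on the norm of the data: if $\norm{Y(t)}_\Hcal$ stayed bounded near $T_{\max}<\infty$, one could restart at $t$ close enough to $T_{\max}$ and extend the solution past $T_{\max}$.

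\emph{Step 2: energy identity for regular data.} Since $\mathcal G$ is $C^1$ from $\Hcal$ to $\Hcal$, the classical regularity theorem (Pazy, Thm.~6.1.5) applies. For $Y_0\in D(\Acal)$, the mild solution is a classical solution: $Y\in C^1([0,T_{\max});\Hcal)\cap C([0,T_{\max});D(\Acal))$, with the same $T_{\max}$. For such $Y$, I will differentiate $E(Y(t))$ directly, using the inner product $\dual{\cdot}{\cdot}_\alpha$:
\[
 \tfrac{\dd}{\dd t}\Bigl(\tfrac12\norm{Y}_\alpha^2-\Phi(u)\Bigr)=\dual{\Acal Y}{Y}_\alpha+\dual{\mathcal G(Y)}{Y}_\alpha-\ip{f(u)}{u_t}.
\]
Here $E(Y)=\tfrac12\norm{Y}_\alpha^2-\Phi(u)$ by \eqref{eq:phase-energy}, and $\Phi$ is $C^1$ by \eqref{eq:Phi-derivative}. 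The middle term equals $\ip{f(u)}{p}=\ip{f(u)}{u_t}$ and cancels the last term. By \eqref{eq:generator-dissipative}, we then get $E'=-D\norm{\nabla z}_2^2$, and integrating gives \eqref{eq:exact-energy} for regular data.

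\emph{Step 3: passage to finite-energy data.} This is the main, though still routine, obstacle. Take $Y_0^k\in D(\Acal)$ with $Y_0^k\to Y_0$ in $\Hcal$, using the density of $D(\Acal)$. Fix $[s,t]\subset[0,T_{\max})$. Lower semicontinuity of the existence time, which follows from Step 1, ensures that the approximating solutions exist on $[0,t]$ for $k$ large. Continuous dependence then gives $Y^k\to Y$ in $C([0,t];\Hcal)$. The functional $E$ is continuous on $\Hcal$, again by \eqref{eq:Phi-derivative}, and $\int_s^t\norm{\nabla z^k}_2^2\to\int_s^t\norm{\nabla z}_2^2$ because $z^k\to z$ in $C([0,t];H_0^1)$. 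Passing to the limit in the regular identity yields \eqref{eq:exact-energy} for finite-energy data.

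Finally, since $\xi\mapsto\norm{\nabla z(\xi)}_2^2$ is continuous, \eqref{eq:exact-energy} shows that $E\in C^1([0,T_{\max}))$ and gives \eqref{eq:Eprime}.
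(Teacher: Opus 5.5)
Your proposal is correct and takes essentially the same route as the paper. The paper cites Pazy, Chapter~6, for the maximal mild solution, continuous dependence and the continuation alternative, which your contraction-mapping and Gronwall argument spells out. It then differentiates $\tfrac12\norm{Y}_\alpha^2-\Phi(u)$ along classical solutions issued from $D(\Acal)$ and passes to the limit through continuous dependence on compact subintervals of $[0,T_{\max})$, exactly as you do. Your observation that $\dual{\mathcal G(Y)}{Y}_\alpha=\ip{f(u)}{u_t}$ cancels the derivative of $\Phi(u)$ is just the computation the paper leaves implicit.
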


\begin{proof}
Proposition~\ref{prop:linear-semigroup}, local Lipschitz continuity of $\mathcal G$, and the standard semilinear $C_0$-semigroup theorem yield a unique maximal mild solution, continuous dependence, and \eqref{eq:continuation-alternative}; see \cite[Chapter~6]{Pazy}.

Let $\Phi$ be the potential functional introduced after \eqref{eq:J-I}; its differentiability is recorded in \eqref{eq:Phi-derivative}. Choose $Y_{0m}\in D(\Acal)$ with $Y_{0m}\to Y_0$ in $\Hcal$, and fix $[0,T]\Subset[0,T_{\max})$. Continuous dependence for the semilinear problem gives a common existence interval $[0,T]$ for all sufficiently large $m$ and
\begin{equation}\label{eq:classical-approximation}
 Y_m\to Y\qquad\text{in }C([0,T];\Hcal).
\end{equation}
Because $\mathcal G$ is continuously Fr\'echet differentiable, the solutions issued from $D(\Acal)$ are classical on $[0,T]$. For such a solution, \eqref{eq:generator-dissipative}, \eqref{eq:Phi-derivative}, and $u_{m,t}=p_m$ give
\[
 \frac{\dd}{\dd t}\left(\frac12\norm{Y_m(t)}_\alpha^2-\Phi(u_m(t))\right)
 =-D\norm{\nabla z_m(t)}_2^2.
\]
The expression in parentheses is $E_m(t)$. Integrating from $s$ to $t$ gives the exact identity at the classical level. By \eqref{eq:classical-approximation}, the quadratic energies converge uniformly on $[0,T]$; moreover, $u_m\to u$ in $C([0,T];H_0^1)$ and the continuity of $\Phi$ imply
\[
 \Phi(u_m)\to\Phi(u)\qquad\text{in }C([0,T]).
\]
Likewise, $z_m\to z$ in $C([0,T];H_0^1)$, and therefore
\[
 \int_s^t\norm{\nabla z_m(\xi)}_2^2\,\dd\xi
 \longrightarrow
 \int_s^t\norm{\nabla z(\xi)}_2^2\,\dd\xi
\]
uniformly for $(s,t)$ in the closed triangle $0\le s\le t\le T$. Passing to the limit proves \eqref{eq:exact-energy}. Finally,
\[
 E(t)=E(0)-D\int_0^t\norm{\nabla z(\xi)}_2^2\,\dd\xi,
\]
and $z\in C([0,T];H_0^1)$; hence $E\in C^1([0,T])$ and \eqref{eq:Eprime} follows. Since $T<T_{\max}$ was arbitrary, the conclusion holds on the maximal interval.
\end{proof}

\begin{remark}[Recovery of the original MGT variable]\label{rem:recover-v}
Let $Y=(u,u_t,w,w_t,z)$ be the maximal energy solution and set $v=w-\tau z$. The third equation in \eqref{eq:augmented-system} gives $v_t=z$. In fact,
\begin{equation}\label{eq:v-regularity}
 v\in C^1([0,T];H_0^1(\Omega))\cap C^2([0,T];L^2(\Omega)),
 \qquad v_t=z,
 \qquad v_{tt}=z_t.
\end{equation}
Consequently, $(u,v)$ solves \eqref{eq:original-system} in distributions. For $0\le s\le t<T_{\max}$,
\begin{equation}\label{eq:v-gradient-identity}
 \norm{\nabla v(t)}_2^2-\norm{\nabla v(s)}_2^2
 =2\int_s^t\ip{\nabla v(\xi)}{\nabla z(\xi)}\,\dd\xi.
\end{equation}
\end{remark}

\subsection{Strong stability and the high-frequency obstruction}
Under \textup{(F1)}, $f'(0)=0$, so the linearization at the origin is
\begin{equation}\label{eq:linearized-original}
\begin{cases}
 u_{tt}-\Delta u+\alpha(v+\tau v_t)=0,\\
 \tau v_{ttt}+v_{tt}-\Delta v-b\Delta v_t+\alpha u=0.
\end{cases}
\end{equation}
Let $-\Delta e_k=\lambda_k e_k$ be the Dirichlet eigenpairs. Modal solutions $u=a_ke^{st}e_k$, $v=c_ke^{st}e_k$ exist precisely when
\begin{equation}\label{eq:Pk}
 P_k(s)=\bigl(s^2+\lambda_k\bigr)\bigl(\tau s^3+s^2+b\lambda_k s+\lambda_k\bigr)
 -\alpha^2(1+\tau s)=0.
\end{equation}
The polynomial \eqref{eq:Pk} is independent of the higher-order structure of the source. We recall the wave-branch calculation from \cite{HaLogMGT} and retain one further order in its remainder.

\begin{proposition}[Wave-branch spectral obstruction]\label{prop:spectral-obstruction}
Assume $b>\tau>0$ and $\alpha\ne0$. For each $\sigma\in\{+1,-1\}$ and all sufficiently large $k$, $P_k$ has a root $s_{k,\sigma}$ satisfying
\begin{equation}\label{eq:spectral-expansion}
 s_{k,\sigma}
 =\sigma i\sqrt{\lambda_k}
 -\sigma i\frac{\alpha^2\tau}{2(b-\tau)\lambda_k^{3/2}}
 -\frac{\alpha^2}{2(b-\tau)\lambda_k^2}
 +O(\lambda_k^{-7/2}).
\end{equation}
In particular,
\begin{equation}\label{eq:real-part-expansion}
 \operatorname{Re}s_{k,\sigma}
 =-\frac{\alpha^2}{2(b-\tau)\lambda_k^2}+O(\lambda_k^{-7/2})\longrightarrow0^-.
\end{equation}
These roots belong to the point spectrum of the complexified generator. Consequently, the linearized semigroup is not uniformly exponentially stable in the natural energy space.
\end{proposition}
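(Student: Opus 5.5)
The plan is a direct perturbative root computation on each Dirichlet mode, followed by a standard spectral argument. Fix $\sigma\in\{\pm1\}$, write $\mu_k=\sqrt{\lambda_k}$, and look for roots of the form $s=\sigma i\mu_k+\varepsilon$ with $\varepsilon$ small. Set $M_k(s)=\tau s^3+s^2+b\lambda_k s+\lambda_k$, so that $P_k(s)=(s^2+\lambda_k)M_k(s)-\alpha^2(1+\tau s)$. A direct evaluation gives
\[
 M_k(\sigma i\mu_k)=\sigma i(b-\tau)\mu_k^3=\sigma iD\mu_k^3,
\]
and this is nonzero because $D>0$. Also $s^2+\lambda_k=\varepsilon(\varepsilon+2\sigma i\mu_k)$. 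Near $\sigma i\mu_k$ the equation $P_k(s)=0$ is therefore equivalent to the fixed-point problem
\[
 \varepsilon=g_k(\varepsilon):=\frac{\alpha^2\bigl(1+\tau(\sigma i\mu_k+\varepsilon)\bigr)}{(2\sigma i\mu_k+\varepsilon)\,M_k(\sigma i\mu_k+\varepsilon)} .
\]
The leading value is
\[
 \varepsilon_k^0=g_k(0)=\frac{\alpha^2(1+\sigma i\tau\mu_k)}{2\sigma i\mu_k\cdot\sigma iD\mu_k^3}=-\frac{\alpha^2}{2D\lambda_k^2}-\sigma i\frac{\alpha^2\tau}{2D\lambda_k^{3/2}} .
\]
These are exactly the second and third terms of \eqref{eq:spectral-expansion}, and $\varepsilon_k^0=O(\mu_k^{-3})$.

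Next I would show that $g_k$ is a contraction of the closed disc $B_k=\{\abs{\varepsilon}\le C\mu_k^{-3}\}$ for large $k$. The key observation is that $M_k'(s)=3\tau s^2+2s+b\lambda_k=O(\mu_k^2)$ and $M_k''=O(\mu_k)$ on $B_k$. Hence
\[
 M_k(\sigma i\mu_k+\varepsilon)=\sigma iD\mu_k^3\bigl(1+O(\mu_k^{-1}\abs{\varepsilon})\bigr).
\]
The remaining factors also have relative errors $O(\abs{\varepsilon}/\mu_k)$:
\[
 \frac{1+\tau(\sigma i\mu_k+\varepsilon)}{1+\sigma i\tau\mu_k}=1+O(\abs{\varepsilon}/\mu_k),
 \qquad
 \frac{2\sigma i\mu_k+\varepsilon}{2\sigma i\mu_k}=1+O(\abs{\varepsilon}/\mu_k).
\]
Together these give $g_k(\varepsilon)=\varepsilon_k^0\bigl(1+O(\mu_k^{-4})\bigr)$ on $B_k$, so $g_k(B_k)\subset B_k$. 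Cauchy estimates on a slightly larger disc give $\abs{g_k'}=O(\mu_k^{-7})$, so $g_k$ contracts. Banach's fixed-point theorem then yields a unique root $s_{k,\sigma}$ of $P_k$ in this disc, with $\varepsilon-\varepsilon_k^0=O(\mu_k^{-3}\cdot\mu_k^{-4})=O(\lambda_k^{-7/2})$. This is \eqref{eq:spectral-expansion}, and taking real parts gives \eqref{eq:real-part-expansion}. Alternatively, Rouché's theorem comparing $P_k$ with its linearization would give the same conclusion. The main obstacle is this bookkeeping: one must confirm that every correction is of relative size $\mu_k^{-4}$ rather than $\mu_k^{-1}$. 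That holds because $\abs{\varepsilon}\lesssim\mu_k^{-3}$ enters each factor only through the ratio $\varepsilon/\mu_k$.

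To place $s=s_{k,\sigma}$ in the point spectrum, I would build an explicit eigenvector. Take $u=a e_k$ and $v=c e_k$ with $(a,c)\ne0$ solving the $2\times2$ modal system, whose determinant is $P_k(s)=0$. Choosing $c=1$ and $a=-M_k(s)/\alpha$, which is allowed since $\alpha\ne0$, the augmented state is
\[
 Y=\bigl(a e_k,\ s a e_k,\ (1+\tau s)e_k,\ s(1+\tau s)e_k,\ s e_k\bigr).
\]
Every component is a multiple of $e_k\in H^2(\Omega)\cap H_0^1(\Omega)$, so $Y\in D(\Acal)$. A direct substitution into \eqref{eq:linear-generator} shows that $\Acal Y=sY$ reduces to the two modal equations. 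Finally, suppose the semigroup were uniformly exponentially stable, $\norm{e^{t\Acal}}\le Me^{-\omega t}$ with $\omega>0$. Then the spectrum of the complexified generator would lie in $\{\operatorname{Re}s\le-\omega\}$, because the spectral bound never exceeds the growth bound. This contradicts $\operatorname{Re}s_{k,\sigma}\to0^-$.
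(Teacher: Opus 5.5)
Your proof is correct, but it localizes the root by a different route from the paper's. The paper works with the quintic $P_k$ directly. It takes one Newton step from $\sigma i\omega_k$, where $-P_k(\sigma i\omega_k)/P_k'(\sigma i\omega_k)$ produces the two correction terms. It then bounds $P_k$, $P_k'$, $P_k''$ at the corrected center, namely $P_k(\widehat s_{k,\sigma})=O(\omega_k^{-3})$ and $P_k'(\widehat s_{k,\sigma})=-2(b-\tau)\omega_k^4+O(1)$, and applies Rouch\'e on a circle of radius $R\omega_k^{-7}$. The eigenvector and the no-spectral-gap conclusion are cited from the logarithmic paper.

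You instead use the product structure $P_k=(s^2+\lambda_k)M_k-\alpha^2(1+\tau s)$ together with $M_k(\sigma i\mu_k)=\sigma iD\mu_k^3\neq0$. This nonvanishing is exactly where $b>\tau$ enters. Dividing out the MGT factor gives a scalar fixed-point problem whose zeroth iterate $g_k(0)$ already contains both correction terms. This buys three things: you avoid second-order Taylor bookkeeping for $P_k$, you get uniqueness of the wave-branch root in the larger disc $\abs{\varepsilon}\le C\mu_k^{-3}$, and the argument is self-contained. Your explicit eigenvector checks out: the second and fourth components of $\Acal Y=sY$ reduce to $(s^2+\lambda_k)a+\alpha(1+\tau s)=0$ and $M_k(s)+\alpha a=0$, whose compatibility is $P_k(s)=0$. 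The growth-bound argument is also fine; for an eigenvector one even has $e^{t\Acal}Y=e^{st}Y$ directly.

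There is one slip. The derivative bound is $\abs{g_k'}=O(\mu_k^{-4})$, not $O(\mu_k^{-7})$. The logarithmic derivative of $g_k$ is $O(\mu_k^{-1})$ and $\abs{g_k}=O(\mu_k^{-3})$. Equivalently, Cauchy's estimate divides the $O(\mu_k^{-7})$ oscillation of $g_k$ by a radius of order $\mu_k^{-3}$. The slip is harmless: $O(\mu_k^{-4})<1$ still gives a contraction. The $O(\lambda_k^{-7/2})$ error follows directly from $\varepsilon=g_k(\varepsilon)=\varepsilon_k^0\bigl(1+O(\mu_k^{-4})\bigr)$ at the fixed point.

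You should also fix $C>\alpha^2\tau/(2D)$ explicitly, since $\mu_k^3\abs{\varepsilon_k^0}\to\alpha^2\tau/(2D)$; this makes $g_k(B_k)\subset B_k$ for large $k$.
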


\begin{proof}
The modal determinant and the expansion with remainder $O(\lambda_k^{-5/2})$ are established in \cite{HaLogMGT}. To obtain the stated refinement, write $\omega_k=\sqrt{\lambda_k}$ and $s_{0,k}=\sigma i\omega_k$. The exact Newton correction satisfies
\[
 -\frac{P_k(s_{0,k})}{P_k'(s_{0,k})}
 =-\sigma i\frac{\alpha^2\tau}{2(b-\tau)\omega_k^3}
 -\frac{\alpha^2}{2(b-\tau)\omega_k^4}+O(\omega_k^{-7}).
\]
Moreover, at the corrected center $\widehat s_{k,\sigma} =s_{0,k}-\frac{P_k(s_{0,k})}{P_k'(s_{0,k})}$ one has
$P_k(\widehat s_{k,\sigma})=O(\omega_k^{-3})$,
$P_k'(\widehat s_{k,\sigma})=-2(b-\tau)\omega_k^4+O(1)$, and
$P_k''(s)=O(\omega_k^3)$ for $\abs{s-s_{0,k}}\le C\omega_k^{-3}$.
Choosing $R>0$ sufficiently large and then $k$ sufficiently large, Rouch\'e's theorem on the circle $\abs{s-\widehat s_{k,\sigma}}=R\omega_k^{-7}$ gives an actual zero at distance $O(\omega_k^{-7})$ from $\widehat s_{k,\sigma}$. The point-spectrum realization is the same as in \cite{HaLogMGT}, and \eqref{eq:real-part-expansion} excludes a uniform spectral gap.
\end{proof}

\begin{proposition}[Strong but nonuniform linear stability]\label{prop:linear-strong-stability}
Assume \eqref{eq:parameter-assumptions} and $\alpha\ne0$. Then the linear semigroup generated by $\Acal$ is strongly stable:
\begin{equation}\label{eq:linear-strong-stability}
 e^{t\Acal}Y_0\longrightarrow0
 \qquad\text{strongly in }\Hcal\quad\text{as }t\to\infty
\end{equation}
for every $Y_0\in\Hcal$. By Proposition~\ref{prop:spectral-obstruction}, this convergence is not uniformly exponential.
\end{proposition}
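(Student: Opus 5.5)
Since the Dirichlet eigenfunctions $\{e_k\}$ diagonalize $\Acal$, I would prove strong stability by modal decomposition rather than through an abstract Arendt--Batty argument. The closed subspaces
\[
\Hcal_k=\operatorname{span}\{(e_k,0,0,0,0),(0,e_k,0,0,0),(0,0,e_k,0,0),(0,0,0,e_k,0),(0,0,0,0,e_k)\}
\]
are five-dimensional and invariant under $\Acal$. They are mutually orthogonal for $\dual{\cdot}{\cdot}_\alpha$, because every term of \eqref{eq:alpha-inner-product} pairs coefficients of the same mode. Their closed sum is all of $\Hcal$.

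Since $e^{t\Acal}$ is a contraction semigroup (Proposition~\ref{prop:linear-semigroup}), a standard $\varepsilon/3$ density argument applies. Given $Y_0$ and $\varepsilon>0$, truncate $Y_0$ to finitely many modes $k\le N$ with error at most $\varepsilon$. The contraction property keeps the tail below $\varepsilon$ for all $t\ge0$. It therefore suffices to show $e^{t\Acal}Y\to0$ for $Y\in\Hcal_k$, for each fixed $k$.

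On $\Hcal_k$ the semigroup is $e^{tM_k}$ for a $5\times5$ matrix $M_k$. Its characteristic polynomial is a nonzero multiple of $P_k$ in \eqref{eq:Pk}: one checks that eliminating $p,q,z$ and substituting $w=(1+\tau s)c$ reproduces the MGT modal equation. So exponential decay on $\Hcal_k$ is equivalent to all roots of $P_k$ lying in $\{\operatorname{Re}s<0\}$.

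Dissipativity \eqref{eq:generator-dissipative} already gives $\operatorname{Re}s\le0$ for every eigenvalue. The remaining task is to exclude purely imaginary eigenvalues $s=i\mu$. I expect this to be the main, though modest, step.

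Suppose $M_kY=i\mu Y$ with $Y\ne0$. Taking the real part of $\dual{M_kY}{Y}_\alpha$ gives $D\lambda_k|z_k|^2=0$, so $z=0$. The fifth component of the eigen-equation, $\tau^{-1}(q-z)=i\mu z$, then forces $q=0$. Next, $q=i\mu w$ gives $\mu w=0$, and the fourth component gives $-\lambda_k(w+Dz)-\alpha u=i\mu q=0$, that is, $\lambda_k w+\alpha u=0$.

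I would split into two cases according to $\mu$.
\begin{itemize}
\item If $\mu\ne0$, then $w=0$, hence $\alpha u=0$, so $u=0$ because $\alpha\ne0$. Then $p=i\mu u=0$, and $Y=0$, a contradiction.
\item If $\mu=0$, then $p=0$ and the second component gives $-\lambda_k u-\alpha w=0$. Together with $\lambda_k w+\alpha u=0$ this is the stiffness system with matrix $\begin{pmatrix}\lambda_k&\alpha\\ \alpha&\lambda_k\end{pmatrix}$. This matrix is invertible since $|\alpha|<\lambda_1\le\lambda_k$, so $u=w=0$, again a contradiction.
\end{itemize}
Equivalently, one can verify directly that $P_k(i\mu)\ne0$. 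Indeed, $P_k(0)=\lambda_k^2-\alpha^2>0$, and for $\mu\ne0$ the imaginary part of $P_k(i\mu)$ produces the factor $(b-\tau)$ together with an $\alpha^2$ term that cannot vanish.

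Hence each finite-dimensional block is exponentially stable, and by the density argument $e^{t\Acal}Y_0\to0$ for every $Y_0\in\Hcal$. The nonuniformity statement follows directly from Proposition~\ref{prop:spectral-obstruction}. The eigenvalues $s_{k,\sigma}$ have $\operatorname{Re}s_{k,\sigma}\to0^-$, so the spectral bound of $\Acal$ is $0$. Uniform exponential stability would require this bound to be negative, so no uniform exponential rate is possible.
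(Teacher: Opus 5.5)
Your proof is correct, but it takes a different route from the paper.

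\textbf{The paper's route.} The paper never examines individual eigenvalues. It uses the commutation of $\mathbf P_N,\mathbf Q_N$ with $e^{t\Acal}$ and the tail energy identity \eqref{eq:linear-tail-identity} to show that every orbit is relatively compact in $\Hcal$. It then runs a LaSalle argument. On the $\omega$-limit set the energy is constant, so $z\equiv0$ and hence $w_t\equiv0$. Using $\alpha\ne0$, the limit trajectory is then stationary with $A_Du+\alpha w=A_Dw+\alpha u=0$. Finally $Q_\alpha(u,w)=0$ forces it to vanish.

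\textbf{Your route.} You use the $\dual{\cdot}{\cdot}_\alpha$-orthogonal decomposition into the invariant five-dimensional blocks $\Hcal_k$. Contractivity and density reduce the problem to a single block. You then check that $M_k$ has no eigenvalue on the imaginary axis. Your eigenvector computation ($z=0\Rightarrow q=0\Rightarrow\lambda_kw+\alpha u=0$, followed by the cases $\mu\ne0$ and $\mu=0$) is exactly the paper's unique-continuation step, carried out for a single mode with $e^{i\mu t}$ time dependence.

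\textbf{What each buys.} Your argument is more elementary: it needs no $\omega$-limit sets and no compactness. It also yields exponential decay on each modal block and ties directly to the roots of $P_k$ in Proposition~\ref{prop:spectral-obstruction}. The paper's argument avoids any spectral computation. It is deliberately built to mirror the nonlinear stable analysis of Section~\ref{sec:stable}, where the source couples modes and destroys the block decomposition, so tail energies and orbit compactness are the only tools available. In that framing the linear case is just the instance $\Rcal_N\equiv0$ of \textup{(HF)} (Remark~\ref{rem:scope-criteria}).

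\textbf{A correction to your side remark.} The imaginary part of $P_k(i\mu)$ alone can vanish for some $\mu\ne0$. That imaginary part is $\mu\bigl[(\lambda_k-\mu^2)(b\lambda_k-\tau\mu^2)-\tau\alpha^2\bigr]$, and the bracket is positive at $\mu^2=0$ but negative at $\mu^2=\lambda_k$. You therefore have to combine it with the real part $(\lambda_k-\mu^2)^2-\alpha^2$. Writing $\epsilon=\lambda_k-\mu^2$, vanishing of both parts gives $\epsilon^2=\alpha^2$ and $\epsilon(D\lambda_k+\tau\epsilon)=\tau\alpha^2$. Together these give $D\lambda_k\epsilon=0$, hence $\alpha=0$. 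Your main eigenvector argument does not depend on this remark, so the proof stands as written.
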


\begin{proof}
Write $T(t)=e^{t\Acal}$ and let $P_N$ be the $L^2$-orthogonal projection onto the first $N$ Dirichlet eigenfunctions; set $Q_N=I-P_N$, and denote their componentwise extensions to $\Hcal$ by $\mathbf P_N$ and $\mathbf Q_N$. Since $P_N(D(A_D))\subset D(A_D)$ and $P_NA_D=A_DP_N$ on $D(A_D)$, the definition of $D(\Acal)$ gives
\[
 \mathbf P_ND(\Acal)\subset D(\Acal),
 \qquad
 \mathbf P_N\Acal Y=\Acal\mathbf P_NY,
 \qquad Y\in D(\Acal).
\]
Hence $\mathbf P_N$ commutes with the resolvent of $\Acal$ and with $T(t)$; the same is true of $\mathbf Q_N$. For
\[
 T(t)Y_0=(u(t),u_t(t),w(t),w_t(t),z(t)),
\]
define the tail energy
\begin{align*}
 E_N^{\rm lin}(t)=\frac12\Big(&\norm{Q_Nu_t(t)}_2^2+\norm{Q_Nw_t(t)}_2^2
 +\tau D\norm{\nabla Q_Nz(t)}_2^2\\
 &+Q_\alpha(Q_Nu(t),Q_Nw(t))\Big).
\end{align*}
The linear energy identity, first for data in $D(\Acal)$ and then by density, gives
\begin{equation}\label{eq:linear-tail-identity}
 E_N^{\rm lin}(t)+D\int_0^t\norm{\nabla Q_Nz(s)}_2^2\,\dd s
 =E_N^{\rm lin}(0).
\end{equation}
Coercivity of $Q_\alpha$ therefore yields
\begin{equation}\label{eq:linear-uniform-tail}
 \sup_{t\ge0}\norm{\mathbf Q_NT(t)Y_0}_{\Hcal}^2
 \le C E_N^{\rm lin}(0)\longrightarrow0
 \qquad(N\to\infty).
\end{equation}
For fixed $N$, the set $\{\mathbf P_NT(t)Y_0:t\ge0\}$ is bounded in a finite-dimensional space. Hence \eqref{eq:linear-uniform-tail} implies that the full orbit $\{T(t)Y_0:t\ge0\}$ is relatively compact in $\Hcal$.

Let
\[
 \mathfrak E_{\rm lin}(Y)=\frac12\norm{Y}_\alpha^2,
 \qquad Y\in\Hcal.
\]
Then $t\mapsto\mathfrak E_{\rm lin}(T(t)Y_0)$ is nonincreasing and converges to some $\ell\ge0$. Let $\overline Y$ be an element of the $\omega$-limit set of the orbit, and choose $t_j\to\infty$ such that $T(t_j)Y_0\to\overline Y$ in $\Hcal$. For every $s\ge0$, continuity of the semigroup and of the quadratic energy gives
\[
 \mathfrak E_{\rm lin}(T(s)\overline Y)
 =\lim_{j\to\infty}\mathfrak E_{\rm lin}(T(t_j+s)Y_0)=\ell.
\]
The energy identity along the trajectory issued from $\overline Y$ therefore implies that its fifth component satisfies $z(s)\equiv0$. The relation $\tau z_t+z=w_t$ gives $w_t\equiv0$. The second linear equation then reduces to
\[
 A_Dw+\alpha u=0.
\]
Since $\alpha\ne0$, the function $u$ is also independent of time, and the first equation gives
\[
 A_Du+\alpha w=0.
\]
Testing these two identities by $w$ and $u$, respectively, and adding yields $Q_\alpha(u,w)=0$. Lemma~\ref{lem:Q-coercive} gives $u=w=0$, and consequently all five components vanish. Thus the $\omega$-limit set consists only of the origin. Relative compactness of the orbit now implies \eqref{eq:linear-strong-stability}.
\end{proof}

\begin{remark}[Absence of compactifying linear regularization]\label{rem:no-linear-compactification}
The same modal family shows that the linearized semigroup is not compact at any positive time. Normalize eigenvectors $Y_{k,\sigma}$ in mutually orthogonal Dirichlet modal subspaces. For every fixed $t>0$,
\[
 \norm{e^{t\Acal_\mathbb C}Y_{k,\sigma}}_{\Hcal}
 =e^{t\operatorname{Re}s_{k,\sigma}}\longrightarrow1.
\]
Thus the image of the bounded sequence $\{Y_{k,\sigma}\}$ has no convergent subsequence. The compactness criteria developed in Section~\ref{sec:stable} are therefore necessarily orbitwise; they do not arise from positive-time smoothing of the linear flow.
\end{remark}

\section{The coupled potential well}\label{sec:well}
Define the Nehari manifold and the well depth by
\begin{equation}\label{eq:Nehari-depth}
 \Ncal=\bigl\{(u,w)\in H_0^1(\Omega)^2\setminus\{(0,0)\}:I(u,w)=0\bigr\},
 \qquad d=\inf_{(u,w)\in\Ncal}J(u,w).
\end{equation}
The stable and unstable regions below $d$ are
\begin{align}
 \Wcal&=\bigl\{(u,w):J(u,w)<d,\ I(u,w)>0\bigr\}\cup\{(0,0)\},\label{eq:stable-well}\\
 \Ucal&=\bigl\{(u,w):J(u,w)<d,\ I(u,w)<0\bigr\}.\label{eq:unstable-well}
\end{align}

\begin{proposition}[Positivity of the well depth]\label{prop:positive-depth}
Assume \textup{(F1)}, \textup{(F2)}, and \textup{(F4)}. Then $\Ncal\ne\varnothing$ and $d>0$. More precisely, there exists $\kappa_0>0$ such that
\begin{equation}\label{eq:kappa0}
 Q_\alpha(u,w)\ge\kappa_0,
 \qquad (u,w)\in\Ncal.
\end{equation}
\end{proposition}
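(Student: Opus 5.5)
We prove the lower bound \eqref{eq:kappa0} first, then obtain $d>0$ from the structural identity, and finally show $\Ncal\ne\varnothing$ using (F4).

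\emph{Lower bound on $\Ncal$.} Let $(u,w)\in\Ncal$. Then $Q_\alpha(u,w)=\int_\Omega uf(u)\,\dd x$. By \eqref{eq:F-eps} with $\varepsilon=c_\alpha\lambda_1/2$, Poincar\'e's inequality, and the Sobolev embedding $H_0^1(\Omega)\hookrightarrow L^{p+2}(\Omega)$ (valid because $p+2<2(p+1)\le 2^*$), we get
\[
 Q_\alpha(u,w)\le \frac{c_\alpha}{2}\norm{\nabla u}_2^2+C\norm{\nabla u}_2^{p+2}.
\]
Lemma~\ref{lem:Q-coercive} gives $\norm{\nabla u}_2^2\le c_\alpha^{-1}Q_\alpha(u,w)$. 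Hence
\[
 \tfrac12 Q_\alpha\le C' Q_\alpha^{(p+2)/2}.
\]
On $\Ncal$ we have $Q_\alpha>0$, since $(u,w)\ne(0,0)$ and $Q_\alpha$ is coercive. Dividing by $Q_\alpha$ and using $p>0$ yields $Q_\alpha\ge \kappa_0:=(2C')^{-2/p}$. Then Lemma~\ref{lem:structural-identity} with $I=0$ and $H_\theta\ge0$ (F2) gives
\[
 J(u,w)\ge\frac{\theta-2}{2\theta}\kappa_0,
\]
so $d\ge\frac{\theta-2}{2\theta}\kappa_0>0$, provided $\Ncal\ne\varnothing$.

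\emph{Nonemptiness.} Take $\phi$ from (F4) and consider the ray $\lambda\mapsto(\lambda\phi,0)$, $\lambda>0$. We have
\[
 I(\lambda\phi,0)=\lambda^2\norm{\nabla\phi}_2^2-\int_\Omega\lambda\phi f(\lambda\phi)\,\dd x.
\]
For small $\lambda$, the same estimate as above gives $I>0$, since $\int\lambda\phi f(\lambda\phi)=o(\lambda^2)$ by \eqref{eq:F-eps}. For large $\lambda$, \eqref{eq:AR-like} and \eqref{eq:F-scaling} give
\[
 \int_\Omega\lambda\phi f(\lambda\phi)\,\dd x\ge\theta\int_\Omega F(\lambda\phi)\,\dd x\ge\theta\lambda^\theta\int_\Omega F(\phi)\,\dd x.
\]
Because $\theta>2$ and $\int_\Omega F(\phi)\,\dd x>0$, this forces $I(\lambda\phi,0)<0$ for large $\lambda$. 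The map $\lambda\mapsto I(\lambda\phi,0)$ is continuous by the $C^1$ regularity of $I$, so it vanishes at some $\lambda^*>0$. Thus $(\lambda^*\phi,0)\in\Ncal$; note $\phi\ne0$ because $F(0)=0$.

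\emph{Main obstacle.} The delicate point is the exponent bookkeeping in the first step. The bound \eqref{eq:F-eps} controls $\abs{uf(u)}$ by $\abs{u}^{p+2}$, and this requires $p+2\le2^*$. This follows from $2(p+1)<2^*$ for $n\ge3$ and holds trivially for $n\le2$. Once that embedding is secured, everything else is a direct application of Lemmas~\ref{lem:Q-coercive}, \ref{lem:potential-scaling} and \ref{lem:structural-identity}.
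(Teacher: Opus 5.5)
Your proof is correct and is essentially the paper's argument: the same ray $\lambda\mapsto(\lambda\phi,0)$ with \eqref{eq:F-eps} for small $\lambda$ and Lemma~\ref{lem:potential-scaling} for large $\lambda$, the same absorption with $\varepsilon$ chosen so the linear coefficient is $1/2$, and the same use of Lemma~\ref{lem:structural-identity} with $I=0$ and $H_\theta\ge0$. The only difference is that you establish the lower bound \eqref{eq:kappa0} before nonemptiness, whereas the paper proves nonemptiness first; this reordering is immaterial.
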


\begin{proof}
Let $\phi\in H_0^1(\Omega)$ satisfy \eqref{eq:focusing} and set $g(\lambda)=I(\lambda\phi,0)$. Lemma~\ref{lem:source-estimates} implies $g(\lambda)>0$ for sufficiently small $\lambda>0$. By Lemma~\ref{lem:potential-scaling}, for $\lambda\ge1$,
\[
 \int_\Omega \lambda\phi f(\lambda\phi)\,\dd x
 \ge\theta\int_\Omega F(\lambda\phi)\,\dd x
 \ge\theta\lambda^\theta\int_\Omega F(\phi)\,\dd x.
\]
Since $\theta>2$, $g(\lambda)<0$ for large $\lambda$. Continuity gives a root, so $\Ncal\ne\varnothing$.

Let $(u,w)\in\Ncal$. Then
\[
 Q_\alpha(u,w)=\int_\Omega uf(u)\,\dd x.
\]
By \eqref{eq:F-eps}, Sobolev embedding, and Lemma~\ref{lem:Q-coercive},
\[
 Q_\alpha(u,w)
 \le\frac{\varepsilon}{\lambda_1c_\alpha}Q_\alpha(u,w)
 +C_\varepsilon Q_\alpha(u,w)^{(p+2)/2}.
\]
Choose $\varepsilon$ so that the first coefficient is at most $1/2$. Since $p>0$ and $Q_\alpha(u,w)>0$, a uniform lower bound \eqref{eq:kappa0} follows. Finally, Lemma~\ref{lem:structural-identity}, $I=0$, and $H_\theta\ge0$ give
\[
 J(u,w)\ge\frac{\theta-2}{2\theta}Q_\alpha(u,w)
 \ge\frac{\theta-2}{2\theta}\kappa_0.
\]
Taking the infimum proves $d>0$.
\end{proof}

\begin{lemma}[Local positivity of the Nehari functional]\label{lem:local-Nehari-positive}
Assume \textup{(F1)}. There exists $\rho_0>0$ such that
\begin{equation}\label{eq:local-Nehari-positive}
 Q_\alpha(u,w)\le\rho_0
 \quad\Longrightarrow\quad
 I(u,w)\ge\frac14Q_\alpha(u,w).
\end{equation}
In particular, $I(u,w)>0$ for every nonzero pair sufficiently close to the origin in $H_0^1(\Omega)^2$.
\end{lemma}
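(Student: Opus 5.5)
The plan is to bound the nonlinear part of $I$ by a superquadratic power of $Q_\alpha$, exactly as in the proof of Proposition~\ref{prop:positive-depth}, and then take $\rho_0$ small enough that this power is dominated by $\tfrac12 Q_\alpha$. Start from the definition \eqref{eq:J-I}:
\[
 I(u,w)=Q_\alpha(u,w)-\int_\Omega uf(u)\,\dd x
 \ge Q_\alpha(u,w)-\int_\Omega\abs{uf(u)}\,\dd x .
\]
Apply \eqref{eq:F-eps} of Lemma~\ref{lem:source-estimates} with a parameter $\varepsilon>0$ to be fixed. This gives
\[
 \int_\Omega\abs{uf(u)}\,\dd x\le\varepsilon\norm{u}_2^2+C_\varepsilon\norm{u}_{p+2}^{p+2}.
\]

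Next, express both terms through $Q_\alpha$. By Poincar\'e's inequality and Lemma~\ref{lem:Q-coercive},
\[
 \norm{u}_2^2\le\lambda_1^{-1}\norm{\nabla u}_2^2\le(\lambda_1c_\alpha)^{-1}Q_\alpha(u,w).
\]
For the second term, note that $p<p^*$ gives $p+2<2(p+1)\le 2^*$ when $n\ge3$, and no restriction is needed when $n=1,2$. Hence $H_0^1(\Omega)\hookrightarrow L^{p+2}(\Omega)$, and
\[
 \norm{u}_{p+2}^{p+2}\le C_S\norm{\nabla u}_2^{p+2}\le C_Sc_\alpha^{-(p+2)/2}Q_\alpha(u,w)^{(p+2)/2}.
\]
Choose $\varepsilon=\lambda_1c_\alpha/4$, so that the quadratic contribution is at most $\tfrac14Q_\alpha$. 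This yields
\[
 I(u,w)\ge Q_\alpha(u,w)\Bigl(\tfrac34-K\,Q_\alpha(u,w)^{p/2}\Bigr),
\]
with $K$ depending only on $f$, $\Omega$, and $\alpha$.

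Since $p>0$, take $\rho_0=(2K)^{-2/p}$. Then $Q_\alpha\le\rho_0$ forces $KQ_\alpha^{p/2}\le\tfrac12$, and hence $I\ge\tfrac14Q_\alpha$, which is \eqref{eq:local-Nehari-positive}.

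For the final assertion, take a nonzero pair close to the origin. The upper bound $Q_\alpha\le(1+\abs\alpha/\lambda_1)\bigl(\norm{\nabla u}_2^2+\norm{\nabla w}_2^2\bigr)$ puts it in the region $Q_\alpha\le\rho_0$. Coercivity (Lemma~\ref{lem:Q-coercive}) gives $Q_\alpha>0$, so $I>0$.

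There is no real obstacle here. The only points that need care are checking the embedding exponent $p+2\le2^*$ and making sure the constant $K$ does not depend on $(u,w)$.
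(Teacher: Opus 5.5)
Your proposal is correct and is essentially the paper's proof. You bound $\int_\Omega uf(u)\,\dd x$ via \eqref{eq:F-eps}, Poincar\'e, Sobolev embedding and coercivity by $\frac{\varepsilon}{\lambda_1c_\alpha}Q_\alpha+C\,Q_\alpha^{(p+2)/2}$, take the linear coefficient at most $\frac14$, and then shrink $\rho_0$ until the superquadratic term is at most $\frac12Q_\alpha$. Your check of the exponent $p+2<2^*$ and of the final positivity assertion is also sound.
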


\begin{proof}
By \eqref{eq:F-eps}, Sobolev embedding, and \eqref{eq:Q-coercive},
\[
 \int_\Omega uf(u)\,\dd x
 \le\frac{\varepsilon}{\lambda_1c_\alpha}Q_\alpha(u,w)
 +C_\varepsilon Q_\alpha(u,w)^{(p+2)/2}.
\]
Choose $\varepsilon$ so that the first coefficient is at most $1/4$, and then choose $\rho_0$ so that $C_\varepsilon Q^{p/2}\le1/2$ for $0\le Q\le\rho_0$.
\end{proof}

\begin{lemma}[Nehari barrier in the unstable region]\label{lem:Nehari-barrier}
Assume \textup{(F1)}--\textup{(F4)}. If $I(u,w)<0$, then there exists $\lambda_{u,w}\in(0,1)$ such that
\begin{equation}\label{eq:scaling-to-Nehari}
 I(\lambda_{u,w}u,\lambda_{u,w}w)=0.
\end{equation}
Moreover,
\begin{equation}\label{eq:Nehari-barrier}
 (\theta-2)Q_\alpha(u,w)+2\theta\int_\Omega H_\theta(u)\,\dd x\ge2\theta d,
\end{equation}
and
\begin{equation}\label{eq:unstable-Q-lower}
 Q_\alpha(u,w)\ge\kappa_0.
\end{equation}
\end{lemma}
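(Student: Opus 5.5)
The plan is to study the scalar function $g(\lambda)=I(\lambda u,\lambda w)=\lambda^2Q_\alpha(u,w)-\int_\Omega \lambda u f(\lambda u)\,\dd x$ on $[0,1]$, and then pass to the Nehari point through the structural identity.

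\emph{Existence of $\lambda_{u,w}$.} Since $I(u,w)<0$, the pair $(u,w)$ is nonzero, so $Q_\alpha(u,w)>0$ by Lemma~\ref{lem:Q-coercive}. By Lemma~\ref{lem:local-Nehari-positive}, $g(\lambda)\ge\frac14\lambda^2Q_\alpha(u,w)>0$ for all $\lambda>0$ small enough that $\lambda^2Q_\alpha(u,w)\le\rho_0$. Moreover, $g$ is continuous on $[0,1]$ because $I\in C^1$, and $g(1)=I(u,w)<0$. The intermediate value theorem therefore yields $\lambda_{u,w}\in(0,1)$ with $g(\lambda_{u,w})=0$. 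Since $\lambda_{u,w}>0$ and $(u,w)\neq(0,0)$, the scaled pair $(\lambda_{u,w}u,\lambda_{u,w}w)$ belongs to $\Ncal$. This proves \eqref{eq:scaling-to-Nehari}.

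\emph{The barrier \eqref{eq:Nehari-barrier}.} Write $\lambda=\lambda_{u,w}$. Since the scaled pair lies on $\Ncal$, we have $J(\lambda u,\lambda w)\ge d$. Applying Lemma~\ref{lem:structural-identity} with $I=0$ gives
\[
 d\le \frac{\theta-2}{2\theta}\lambda^2Q_\alpha(u,w)+\int_\Omega H_\theta(\lambda u)\,\dd x.
\]
Two monotonicity facts now remove $\lambda$. First, $\lambda^2<1$ and $Q_\alpha(u,w)>0$, so $\lambda^2Q_\alpha(u,w)\le Q_\alpha(u,w)$. Second, (F3) gives $H_\theta(\lambda u(x))\le H_\theta(u(x))$ pointwise. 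Hence
\[
 d\le\frac{\theta-2}{2\theta}Q_\alpha(u,w)+\int_\Omega H_\theta(u)\,\dd x,
\]
and multiplying by $2\theta$ yields \eqref{eq:Nehari-barrier}.

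One integrability point needs checking: $\int_\Omega H_\theta(u)\,\dd x$ must be finite. This holds because \eqref{eq:F-eps} and the embedding $H_0^1\hookrightarrow L^{p+2}$ bound $|sf(s)|+|F(s)|$ by an integrable function of $u$.

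\emph{The lower bound \eqref{eq:unstable-Q-lower}.} Since the scaled pair is in $\Ncal$, Proposition~\ref{prop:positive-depth} gives $\lambda^2Q_\alpha(u,w)\ge\kappa_0$. Because $\lambda<1$, it follows that $Q_\alpha(u,w)>\lambda^2Q_\alpha(u,w)\ge\kappa_0$.

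The argument has no serious obstacle. The only subtle step is the use of radial monotonicity (F3) to pass from $H_\theta(\lambda u)$ to $H_\theta(u)$; this is exactly where (F3) enters, as anticipated in Remark~\ref{rem:use-assumptions}.
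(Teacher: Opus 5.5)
Your proposal is correct and follows essentially the same route as the paper: you scale toward the origin using local positivity of $I$, apply the intermediate value theorem, and combine the structural identity on $\Ncal$ with (F3) and $\lambda_{u,w}<1$ to get the barrier. You then obtain $Q_\alpha(u,w)\ge\kappa_0$ from $\lambda_{u,w}^2Q_\alpha(u,w)=Q_\alpha(\lambda_{u,w}u,\lambda_{u,w}w)\ge\kappa_0$, exactly as in the paper.
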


\begin{proof}
Because $I(u,w)<0$, the pair is nonzero. Lemma~\ref{lem:local-Nehari-positive} gives $I(\lambda u,\lambda w)>0$ for small $\lambda>0$, and continuity yields \eqref{eq:scaling-to-Nehari}. By the definition of $d$ and Lemma~\ref{lem:structural-identity},
\[
 d\le J(\lambda_{u,w}u,\lambda_{u,w}w)
 =\frac{\theta-2}{2\theta}\lambda_{u,w}^2Q_\alpha(u,w)
 +\int_\Omega H_\theta(\lambda_{u,w}u)\,\dd x.
\]
Since $0<\lambda_{u,w}<1$, assumption \textup{(F3)} implies \eqref{eq:Nehari-barrier}. Finally,
\[
 \lambda_{u,w}^2Q_\alpha(u,w)
 =Q_\alpha(\lambda_{u,w}u,\lambda_{u,w}w)\ge\kappa_0,
\]
which gives \eqref{eq:unstable-Q-lower}.
\end{proof}

\begin{lemma}[Transversality of the Nehari manifold]\label{lem:Nehari-transversality}
Assume \textup{(F1)}--\textup{(F3)}. For every $(u,w)\in\Ncal$,
\begin{equation}\label{eq:Nehari-transversality}
 \dual{I'(u,w)}{(u,w)}
 \le-(\theta-2)Q_\alpha(u,w)<0.
\end{equation}
Consequently, $\Ncal$ is a $C^1$ codimension-one manifold near each of its points, and every minimizer of $J$ on $\Ncal$ is a critical point of $J$.
\end{lemma}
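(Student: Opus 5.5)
The plan is to compute $\dual{I'(u,w)}{(u,w)}$ explicitly, rewrite it with the identity $I(u,w)=0$, and then use the differential form \eqref{eq:Htheta-differential} of (F3).

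Testing \eqref{eq:I-derivative} with $(h,k)=(u,w)$ gives
\[
 \dual{I'(u,w)}{(u,w)}=2Q_\alpha(u,w)-\int_\Omega\bigl(uf(u)+u^2f'(u)\bigr)\dd x .
\]
Remark~\ref{rem:sufficient-H} shows that (F3) under (F1) is equivalent to $s^2f'(s)\ge(\theta-1)sf(s)$. Hence
\[
 \int_\Omega\bigl(uf(u)+u^2f'(u)\bigr)\dd x\ge\theta\int_\Omega uf(u)\dd x .
\]
On $\Ncal$ we have $\int_\Omega uf(u)\dd x=Q_\alpha(u,w)$, so
\[
 \dual{I'(u,w)}{(u,w)}\le(2-\theta)Q_\alpha(u,w).
\]
Strict negativity comes from $Q_\alpha(u,w)\ge\kappa_0>0$ (Proposition~\ref{prop:positive-depth}), or directly from Lemma~\ref{lem:Q-coercive}, since $(u,w)\ne(0,0)$. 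One integrability check is needed: $u^2f'(u)$ must be in $L^1$. This holds by \eqref{eq:F1-growth} and the embedding $H_0^1\hookrightarrow L^{p+2}$.

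For the manifold statement, note that $I\in C^1(H_0^1(\Omega)^2;\R)$. The estimate just proved shows $I'(u,w)\ne0$ at every point of $\Ncal$. Also, $\Ncal$ is relatively open in $I^{-1}(0)$, because it excludes only the origin, and the origin is isolated in $I^{-1}(0)$ by Lemma~\ref{lem:local-Nehari-positive}. The implicit function theorem (regular value theorem in Banach spaces) then makes $\Ncal$ a $C^1$ codimension-one submanifold near each of its points, with tangent space $\ker I'(u,w)$.

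For minimizers, let $(u,w)\in\Ncal$ satisfy $J(u,w)=d$. The Lagrange multiplier rule for $C^1$ functionals on a level set with nonvanishing derivative gives $\mu\in\R$ with $J'(u,w)=\mu I'(u,w)$. Testing against $(u,w)$ and using \eqref{eq:J-radial-derivative},
\[
 0=I(u,w)=\mu\,\dual{I'(u,w)}{(u,w)} .
\]
Transversality forces $\mu=0$, so $J'(u,w)=0$.

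The main point to verify is that (F3) can be used pointwise in the differential form \eqref{eq:Htheta-differential}. Remark~\ref{rem:sufficient-H} supplies this, so no step is a genuine obstacle. The remaining care is only to confirm that the Lagrange multiplier theorem applies, which needs $I'(u,w)$ to be a nonzero, and hence surjective, functional onto $\R$.
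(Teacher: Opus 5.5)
Your proposal is correct and follows the paper's proof: it uses the differential form of \textup{(F3)} from Remark~\ref{rem:sufficient-H}, the Nehari identity $\int_\Omega uf(u)\,\dd x=Q_\alpha(u,w)$, the implicit-function theorem, and pairs $J'=\mu I'$ with $(u,w)$ to force $\mu=0$. One small point: take strict negativity from Lemma~\ref{lem:Q-coercive}, as you also suggest, rather than from Proposition~\ref{prop:positive-depth}, whose statement additionally assumes \textup{(F4)}, which is not among the lemma's hypotheses.
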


\begin{proof}
By Remark~\ref{rem:sufficient-H}, assumption \textup{(F3)} implies
\begin{equation}\label{eq:Htheta-pointwise-derivative}
 s^2f'(s)\ge(\theta-1)sf(s),
 \qquad s\in\R.
\end{equation}
Hence, on $\Ncal$,
\begin{align*}
 \dual{I'(u,w)}{(u,w)}
 &=2Q_\alpha(u,w)-\int_\Omega\bigl(uf(u)+u^2f'(u)\bigr)\,\dd x\\
 &\le2Q_\alpha(u,w)-\theta\int_\Omega uf(u)\,\dd x
 =-(\theta-2)Q_\alpha(u,w),
\end{align*}
which proves \eqref{eq:Nehari-transversality}. In particular, $I'(u,w)\ne0$ on $\Ncal$, so the implicit-function theorem gives the local manifold property. If $(u,w)$ minimizes $J$ on $\Ncal$, the Lagrange-multiplier rule gives $J'(u,w)=\mu I'(u,w)$. Pairing with $(u,w)$ and using $\dual{J'(u,w)}{(u,w)}=I(u,w)=0$ together with \eqref{eq:Nehari-transversality} yields $\mu=0$.
\end{proof}

\begin{remark}
The roles of $H_\theta$ are now transparent. In the stable region, \eqref{eq:structural-identity} and $H_\theta\ge0$ make $J$ coercive. In the unstable region, radial monotonicity gives the barrier \eqref{eq:Nehari-barrier}; at the critical level, it yields the transversality \eqref{eq:Nehari-transversality}. The same source remainder therefore controls the stable, unstable, and threshold dynamics.
\end{remark}

\section{Global and asymptotic dynamics in the stable set}\label{sec:stable}
We begin with global existence below the well depth.

\begin{theorem}[Global solutions below the well depth]\label{thm:stable-global}
Assume \eqref{eq:parameter-assumptions} and \textup{(F1)}, \textup{(F2)}, \textup{(F4)}. Let $Y_0=(u_0,u_1,w_0,w_1,z_0)\in\Hcal$ satisfy
\begin{equation}\label{eq:stable-initial}
 E(0)<d,
 \qquad (u_0,w_0)\in\Wcal.
\end{equation}
Then the maximal solution from Theorem~\ref{thm:local-wp} is global. Moreover,
\begin{equation}\label{eq:stable-invariance}
 (u(t),w(t))\in\Wcal,
 \qquad t\ge0,
\end{equation}
and there exists $C=C(Y_0)>0$ such that
\begin{equation}\label{eq:stable-uniform-bound}
 \norm{u_t(t)}_2^2+\norm{w_t(t)}_2^2+\norm{\nabla z(t)}_2^2
 +\norm{\nabla u(t)}_2^2+\norm{\nabla w(t)}_2^2\le C,
 \qquad t\ge0.
\end{equation}
The exact identity \eqref{eq:exact-energy} holds for all $0\le s\le t<\infty$.
\end{theorem}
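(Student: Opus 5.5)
The plan is the classical Sattinger invariance argument, carried out in the coupled variables $(u,w)$ and closed with the continuation alternative \eqref{eq:continuation-alternative}. The key structural fact is that the kinetic and MGT terms in \eqref{eq:phase-energy} are nonnegative, so $J(u(t),w(t))\le E(t)$. The energy identity \eqref{eq:exact-energy} then gives $E(t)\le E(0)<d$ for all $t\in[0,T_{\max})$. Hence $J(u(t),w(t))<d$ along the whole trajectory, and only the sign condition $I>0$ has to be propagated.

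\textbf{Step 1 (invariance).} Suppose \eqref{eq:stable-invariance} fails. Since $(u,w)\in C([0,T_{\max});H_0^1(\Omega)^2)$ and $I$ is continuous, there is a first time $t_*\in(0,T_{\max})$ with $(u(t_*),w(t_*))\notin\Wcal$. Because $J<d$ at $t_*$, we must have $I(u(t_*),w(t_*))=0$ with $(u(t_*),w(t_*))\neq(0,0)$.

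Here one detail needs care: $\Wcal$ contains the origin, so the trajectory could pass through $(0,0)$ and then emerge with $I<0$. This is excluded by Lemma~\ref{lem:local-Nehari-positive}. Near the origin, every nonzero pair has $I>0$, so a path leaving $\Wcal$ cannot do so in a neighbourhood of $(0,0)$.

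More precisely, define $t_*$ as the infimum of times at which $I(u(t),w(t))\le0$ and $(u(t),w(t))\neq(0,0)$. At $t_*$, either $I=0$ with a nonzero pair, or $t_*$ is approached by nonzero pairs converging to $(0,0)$ with $I\le0$. The second alternative contradicts Lemma~\ref{lem:local-Nehari-positive}, since $I\ge\frac14Q_\alpha>0$ for small nonzero pairs. So $(u(t_*),w(t_*))\in\Ncal$. But then $J(u(t_*),w(t_*))\ge d$ by the definition \eqref{eq:Nehari-depth}, contradicting $J<d$.

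\textbf{Step 2 (a priori bound).} For $t<T_{\max}$, apply Lemma~\ref{lem:structural-identity} together with $I\ge0$ and $H_\theta\ge0$ (assumption (F2)). This yields
\[
\frac{\theta-2}{2\theta}Q_\alpha(u,w)\le J(u,w)<d.
\]
Lemma~\ref{lem:Q-coercive} then bounds $\norm{\nabla u}_2^2+\norm{\nabla w}_2^2$.

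For the kinetic terms, write
\[
\frac12\norm{u_t}_2^2+\frac12\norm{w_t}_2^2+\frac{\tau D}{2}\norm{\nabla z}_2^2=E(t)-J(u,w)\le E(0)-J(u,w).
\]
Since $J\ge0$ on $\Wcal$, this gives a bound by $E(0)$. Combining the two bounds yields \eqref{eq:stable-uniform-bound} on $[0,T_{\max})$ with a constant depending only on $E(0)$, $d$, $\theta$, $c_\alpha$, $\tau$ and $D$.

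\textbf{Step 3 (globality).} The bound \eqref{eq:stable-uniform-bound} controls the full $\Hcal$-norm of $Y(t)$. Hence \eqref{eq:continuation-alternative} forces $T_{\max}=\infty$, and the energy identity \eqref{eq:exact-energy} then holds for all $0\le s\le t<\infty$. Assumption (F4) enters only through Proposition~\ref{prop:positive-depth}, which ensures $d>0$ and $\Ncal\neq\varnothing$, so that $\Wcal$ is meaningful.

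The only delicate point in this argument is the exit-time step, in particular excluding an exit of the trajectory through the origin. Lemma~\ref{lem:local-Nehari-positive} handles this.
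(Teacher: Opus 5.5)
Your proof is correct and follows the paper's argument essentially step for step: $J(u(t),w(t))\le E(t)\le E(0)<d$, a first-contact argument at an infimum of bad times with Lemma~\ref{lem:local-Nehari-positive} excluding an exit through the origin, coercivity from Lemma~\ref{lem:structural-identity} together with $H_\theta\ge0$, and finally the continuation alternative. The only cosmetic difference is the choice of bad set: you take the infimum over $\{t: I(u(t),w(t))\le0,\ (u(t),w(t))\ne(0,0)\}$, whereas the paper takes the infimum of $\{t: I(u(t),w(t))<0\}$ and then rules out $I=0$ at nonzero pairs separately. Since your bad set need not attain its infimum, your refined infimum formulation, rather than your opening ``first time'' phrasing, is the one that carries the argument.
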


\begin{proof}
Let $[0,T_{\max})$ be the maximal interval. The maps $t\mapsto I(u(t),w(t))$ and $t\mapsto J(u(t),w(t))$ are continuous. The energy identity gives
\begin{equation}\label{eq:J-below-d}
 J(u(t),w(t))\le E(t)\le E(0)<d,
 \qquad 0\le t<T_{\max}.
\end{equation}
We claim that $I(u(t),w(t))\ge0$ throughout the maximal interval. Local positivity, together with $(u_0,w_0)\in\Wcal$, first gives this inequality on a nontrivial interval, including the case $(u_0,w_0)=(0,0)$. If the open set
\[
 \Sigma=\{t\in(0,T_{\max}):I(u(t),w(t))<0\}
\]
were nonempty, let $t_*=\inf\Sigma$ and choose $t_j\in\Sigma$ with $t_j\downarrow t_*$. Continuity gives $I(u(t_*),w(t_*))=0$. If the pair is nonzero, it belongs to $\Ncal$, so $J(u(t_*),w(t_*))\ge d$, contradicting \eqref{eq:J-below-d}. If it is zero, continuity of $Q_\alpha$ and Lemma~\ref{lem:local-Nehari-positive} give $I\ge0$ near $t_*$, contradicting $I(u(t_j),w(t_j))<0$. Thus $I\ge0$. Equality can occur only at $(u,w)=(0,0)$, again by \eqref{eq:J-below-d}, and \eqref{eq:stable-invariance} follows.

By Lemmas~\ref{lem:structural-identity} and \ref{lem:Q-coercive},
\begin{equation}\label{eq:J-stable-coercive}
 J(u(t),w(t))
 \ge\frac{\theta-2}{2\theta}Q_\alpha(u(t),w(t))
 \ge\frac{\theta-2}{2\theta}c_\alpha
 \bigl(\norm{\nabla u(t)}_2^2+\norm{\nabla w(t)}_2^2\bigr).
\end{equation}
Since
\[
 E(t)=\frac12\norm{u_t(t)}_2^2+\frac12\norm{w_t(t)}_2^2
 +\frac{\tau D}{2}\norm{\nabla z(t)}_2^2+J(u(t),w(t))\le E(0),
\]
all terms are nonnegative and \eqref{eq:stable-uniform-bound} follows. The continuation alternative forces $T_{\max}=\infty$.
\end{proof}

\begin{remark}\label{rem:energy-coercive-stable}
Along every solution in Theorem~\ref{thm:stable-global},
\begin{equation}\label{eq:energy-phase-coercive}
 E(t)\ge c\norm{Y(t)}_{\Hcal}^2
\end{equation}
for some $c>0$ depending only on the structural parameters. In particular, $E$ is nonnegative and has a finite limit as $t\to\infty$.
\end{remark}

Define the invariant phase region
\begin{equation}\label{eq:X-phase-region}
 \Xcal=\bigl\{Y=(u,p,w,q,z)\in\Hcal:E(Y)<d,\ (u,w)\in\Wcal\bigr\}.
\end{equation}

\begin{proposition}[Semiflow and Lyapunov structure]\label{prop:semiflow}
Theorem~\ref{thm:stable-global} defines a continuous semiflow $S(t)$ on $\Xcal$. For every $Y_0\in\Xcal$,
\begin{equation}\label{eq:semiflow-energy}
 E(S(t)Y_0)+D\int_0^t\norm{\nabla z(s)}_2^2\,\dd s=E(Y_0).
\end{equation}
\end{proposition}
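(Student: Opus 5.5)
The plan is to verify the three defining properties of a continuous semiflow on $\Xcal$, namely invariance, the semigroup law and joint continuity, and then read off the energy identity. Throughout, $S(t)Y_0:=Y(t)$ denotes the global solution supplied by Theorem~\ref{thm:stable-global}, which exists and is unique by Theorem~\ref{thm:local-wp}.

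\emph{Step 1: invariance.} Let $Y_0\in\Xcal$. Theorem~\ref{thm:stable-global} gives $T_{\max}=\infty$ and $(u(t),w(t))\in\Wcal$ for all $t\ge0$. The energy identity \eqref{eq:exact-energy} gives $E(Y(t))\le E(Y_0)<d$. Hence $S(t)\Xcal\subset\Xcal$.

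\emph{Step 2: semigroup law.} Fix $s\ge0$. Both $t\mapsto Y(t+s)$ and the solution issued from $Y(s)$ are energy solutions with the same initial state. This follows from the semigroup property of $e^{t\Acal}$ applied to \eqref{eq:variation-constants}. Uniqueness in Theorem~\ref{thm:local-wp} then gives $S(t+s)=S(t)S(s)$, and $S(0)=I$ is immediate.

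\emph{Step 3: continuity.} I need continuity of $(t,Y_0)\mapsto S(t)Y_0$ from $[0,\infty)\times\Xcal$ into $\Hcal$. Take $t_n\to t$ and $Y_{0n}\to Y_0$ in $\Xcal$, and fix $T>\sup_n t_n$. Every solution starting in $\Xcal$ is global, so $[0,T]$ is a common existence interval. The continuous dependence statement of Theorem~\ref{thm:local-wp} then gives $S(\cdot)Y_{0n}\to S(\cdot)Y_0$ in $C([0,T];\Hcal)$. Combining this with the continuity of $t\mapsto S(t)Y_0$ yields joint continuity.

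This step is the only delicate point, because continuous dependence in the semilinear theory is usually stated only on intervals where the limit solution exists. Here the limit solution is global, so no obstruction arises. If one prefers an explicit argument, a Gronwall estimate on the mild formulation works. It uses the uniform bound \eqref{eq:stable-uniform-bound} together with the local Lipschitz bound \eqref{eq:f-local-Lipschitz} for $\mathcal G$. The radius in that bound can be taken uniform because $E(t)\le E(0)<d$ and \eqref{eq:energy-phase-coercive} bound all trajectories in $\Xcal$ by a constant depending only on $d$.

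\emph{Step 4: energy identity.} The identity \eqref{eq:semiflow-energy} is \eqref{eq:exact-energy} with $s=0$. That identity holds for all $t\ge0$ by Theorem~\ref{thm:stable-global}. By Remark~\ref{rem:energy-coercive-stable}, $E$ is therefore a nonincreasing Lyapunov function on $\Xcal$, and it is strictly decreasing unless $\nabla z\equiv0$ on an interval.
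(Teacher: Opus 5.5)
Your proposal is correct and follows the paper's own argument: invariance and globality come from Theorem~\ref{thm:stable-global}, the semigroup law from uniqueness, continuity from the continuous-dependence clause of Theorem~\ref{thm:local-wp} (applied on arbitrary compact intervals because all solutions from $\Xcal$ are global), and the Lyapunov identity is \eqref{eq:exact-energy} with $s=0$. Your additional details are valid elaborations of steps the paper states in one line; these are the Duhamel shift for the semigroup law and the uniform phase bound on $\Xcal$ from \eqref{eq:energy-phase-coercive}, which supports an explicit Gronwall estimate.
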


\begin{proof}
Existence and positive invariance follow from Theorem~\ref{thm:stable-global}; uniqueness gives the semigroup property, and Theorem~\ref{thm:local-wp} gives continuity with respect to the initial state. The Lyapunov identity is \eqref{eq:exact-energy}.
\end{proof}

\subsection{Weak asymptotic stability without compactness assumptions}
The following theorem is the unconditional asymptotic result that holds at the natural energy level.

\begin{theorem}[Weak asymptotic stability]\label{thm:weak-asymptotic}
Assume the hypotheses of Theorem~\ref{thm:stable-global} and
\begin{equation}\label{eq:nonzero-coupling}
 0<\abs{\alpha}<\lambda_1.
\end{equation}
Then
\begin{equation}\label{eq:weak-convergence-H}
 Y(t)\rightharpoonup0
 \qquad\text{weakly in }\Hcal\quad\text{as }t\to\infty.
\end{equation}
Moreover, for every $2\le r<2^*$,
\begin{equation}\label{eq:strong-lower-convergence}
 u(t)\to0,
 \qquad w(t)\to0,
 \qquad z(t)\to0
 \quad\text{strongly in }L^r(\Omega).
\end{equation}
In particular,
\begin{equation}\label{eq:potential-to-zero}
 \int_\Omega F(u(t))\,\dd x\to0,
 \qquad \ip{u(t)}{w(t)}_{L^2(\Omega)}\to0.
\end{equation}
\end{theorem}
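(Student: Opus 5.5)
We follow the strategy of a weak $\omega$-limit argument. By Theorem~\ref{thm:stable-global} and Remark~\ref{rem:energy-coercive-stable}, the orbit $\{Y(t):t\ge0\}$ is bounded in $\Hcal$. It therefore suffices to show that every weak sequential limit $\overline Y=\operatorname{w-lim}_j Y(t_j)$ with $t_j\to\infty$ is zero. The remaining conclusions then follow by compactness: the embedding $H_0^1\hookrightarrow L^r$ is compact for $r<2^*$, so weak convergence of the first, third and fifth components upgrades to strong $L^r$ convergence, giving \eqref{eq:strong-lower-convergence}. Then \eqref{eq:F-eps}, with $r=2$ and $r=p+2<2^*$, together with the $L^2$ convergence of $u$ and $w$ yields \eqref{eq:potential-to-zero}.

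The first key step uses time-shifted trajectories. For $s\in[0,1]$ set $Y_j(s)=Y(t_j+s)$. The energy identity \eqref{eq:exact-energy} gives $\int_{t_j}^{t_j+T}\norm{\nabla z}_2^2\to0$ for each fixed $T$, since $E$ has a finite limit. The bound on $\Hcal$ and the equations \eqref{eq:augmented-system} give uniform bounds on $\partial_s$ of $(u,w,z)$ in $L^2$ and of $(u_t,w_t)$ in $H^{-1}$. By Aubin--Lions we may pass to a subsequence with
\[
 (u_j,w_j,z_j)\to(\bar u,\bar w,\bar z)\ \text{in } C([0,T];L^2),\qquad (u_{j,t},w_{j,t})\to(\bar u_t,\bar w_t)\ \text{in } C([0,T];H^{-1}),
\]
and with weak-$*$ convergence in the energy spaces. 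Moreover $\bar z\equiv0$, since $\nabla z_j\to0$ in $L^2(0,T;L^2)$. The nonlinear term converges: $u_j\to\bar u$ strongly in $C([0,T];L^{r})$ for $r<2^*$, and subcritical growth gives $f(u_j)\to f(\bar u)$ in $L^1(0,T;L^{q})$ for some $q$, hence in distributions. The limit therefore solves \eqref{eq:augmented-system} in distributions, with $\bar z=0$.

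The second step identifies the limit exactly as in the linear proof of Proposition~\ref{prop:linear-strong-stability}. From $\tau\bar z_t+\bar z=\bar w_t$ we get $\bar w_t\equiv0$. The second equation then gives $A_D\bar w+\alpha\bar u=0$. Since $\alpha\ne0$, $\bar u$ is time-independent, so $\bar u_{tt}=0$, and the first equation gives $A_D\bar u+\alpha\bar w=f(\bar u)$. Hence $(\bar u,\bar w)$ is a stationary point, i.e.\ a critical point of $J$, and $I(\bar u,\bar w)=\dual{J'}{(\bar u,\bar w)}=0$. Unlike the linear case, we cannot conclude from $Q_\alpha=0$ directly, and this is the main obstacle: excluding a nontrivial stationary limit. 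Our plan is to use weak lower semicontinuity. Because $u(t_j)\to\bar u$ strongly in $L^{p+2}$,
\[
 J(\bar u,\bar w)\le\liminf_j J(u(t_j),w(t_j))\le E(0)<d.
\]
If $(\bar u,\bar w)\ne0$, then $(\bar u,\bar w)\in\Ncal$, which forces $J\ge d$, a contradiction. Hence $\bar u=\bar w=0$.

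Finally, we must identify the velocity components of $\overline Y$. We have $\bar z(0)=0$ and the weak limit of $z(t_j)$ in $H_0^1$ coincides with $\bar z(0)$. For $u_t(t_j)$ and $w_t(t_j)$, the weak $L^2$ limits equal $\bar u_t(0)=0$ and $\bar w_t(0)=0$, by the $C([0,T];H^{-1})$ convergence and uniqueness of limits. Thus $\overline Y=0$. Since every subsequence has a further subsequence converging weakly to $0$, \eqref{eq:weak-convergence-H} holds. One delicate point to check is that the stationary limit is identified for every $s$, rather than only almost everywhere; continuity in $C([0,T];L^2)$ and in $H^{-1}$ handles this.
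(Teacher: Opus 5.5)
Your proposal is correct and follows essentially the same route as the paper. Both arguments take time-translated trajectories, use the vanishing translated dissipation to force $\bar z\equiv0$ and then $\bar w_t\equiv0$, use $\alpha\ne0$ to make the limit stationary with $I(\bar u,\bar w)=0$, rule out a nontrivial Nehari limit via the lower-semicontinuity bound $J(\bar u,\bar w)<d$, and conclude by the subsequence argument plus compact embedding. The differences are cosmetic: you get time compactness from Aubin--Lions--Simon in $C([0,T];L^2)$ and $C([0,T];H^{-1})$ and bound $J$ by $E(0)$, whereas the paper uses a Banach-valued Arzel\`a--Ascoli argument in $C([-T,T];L^r)$ with $C_w$-compactness and bounds $J$ by $E_\infty$.
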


\begin{proof}
By Remark~\ref{rem:energy-coercive-stable}, the orbit is bounded in $\Hcal$. Let $t_j\to\infty$ be arbitrary. We show that a subsequence of $Y(t_j)$ converges weakly to zero.

Fix $T>0$ and, for $j$ sufficiently large, define the translated trajectories
\[
 Y_j(s)=Y(t_j+s),
 \qquad s\in[-T,T].
\]
The uniform phase bound and the equations give
\begin{align*}
 u_j,w_j,z_j&\quad\text{bounded in }L^\infty(-T,T;H_0^1(\Omega)),\\
 p_j:=u_{j,s},\ q_j:=w_{j,s}&\quad\text{bounded in }L^\infty(-T,T;L^2(\Omega)),\\
 p_{j,s}=\Delta u_j-\alpha w_j+f(u_j),\quad
 q_{j,s}=\Delta w_j+D\Delta z_j-\alpha u_j
 &\quad\text{bounded in }L^\infty(-T,T;H^{-1}(\Omega)),\\
 z_{j,s}=\tau^{-1}(q_j-z_j)&\quad\text{bounded in }L^\infty(-T,T;L^2(\Omega)).
\end{align*}
Here the bound for $p_{j,s}$ follows from \textup{(F1)} and the uniform $H_0^1$-bound. The compactness of the translated configuration variables follows from the compact embedding $H_0^1(\Omega)\Subset L^r(\Omega)$, $2\le r<2^*$, together with time equicontinuity. Indeed, the bounds on $p_j=u_{j,s}$, $q_j=w_{j,s}$, and $z_{j,s}=\tau^{-1}(q_j-z_j)$ show that $u_j$, $w_j$, and $z_j$ are uniformly Lipschitz from $[-T,T]$ into $L^2(\Omega)$. If $n\ge3$, choose $\vartheta\in(0,1]$ so that
\[
 \frac1r=\frac{\vartheta}{2}+\frac{1-\vartheta}{2^*}.
\]
Interpolation with the uniform $H_0^1$-bounds gives
\begin{align*}
 \norm{u_j(t)-u_j(s)}_r
 &\le C\norm{u_j(t)-u_j(s)}_2^{\vartheta}
       \norm{u_j(t)-u_j(s)}_{H_0^1}^{1-\vartheta}\\
 &\le C\abs{t-s}^{\vartheta},
\end{align*}
and the same estimate holds for $w_j$ and $z_j$. If $n=1,2$, the same conclusion follows by fixing a finite $R>r$ and interpolating between $L^2(\Omega)$ and $L^R(\Omega)$. Thus these families are equicontinuous in $L^r(\Omega)$, while their values at each fixed time are relatively compact in $L^r(\Omega)$. The Banach-valued Arzel\`a--Ascoli theorem therefore yields, after extraction,
\begin{equation}\label{eq:translated-strong}
 u_j\to\bar u,
 \quad w_j\to\bar w,
 \quad z_j\to\bar z
 \quad\text{in }C([-T,T];L^r(\Omega))
\end{equation}
for every $2\le r<2^*$. We also record the weak temporal compactness needed for the velocity components. For every $\phi\in H_0^1(\Omega)$,
\[
 \abs{\ip{p_j(t)-p_j(s)}{\phi}}
 \le C\abs{t-s}\norm{\phi}_{H_0^1},
 \qquad
 \abs{\ip{q_j(t)-q_j(s)}{\phi}}
 \le C\abs{t-s}\norm{\phi}_{H_0^1},
\]
because $p_{j,s}$ and $q_{j,s}$ are uniformly bounded in $H^{-1}$. Since $H_0^1(\Omega)$ is dense in $L^2(\Omega)$ and $p_j,q_j$ are uniformly bounded in $L^2$, approximation of an arbitrary $L^2$ test function by $H_0^1$ functions yields scalar equicontinuity for every element of $L^2(\Omega)$. The weak topology on a bounded ball of the separable Hilbert space $L^2(\Omega)$ is metrizable; hence the scalar Arzel\`a--Ascoli argument gives relative compactness in $C_w([-T,T];L^2(\Omega))$.

For the configuration variables, one argues in the same way in the duality $H_0^1$--$H^{-1}$. Indeed, $L^2(\Omega)$ is dense in $H^{-1}(\Omega)$, the families $u_j,w_j,z_j$ are uniformly bounded in $H_0^1$, and their time derivatives are uniformly bounded in $L^2$. Hence their scalar pairings with every element of $H^{-1}$ are equicontinuous. After extraction,
\begin{align}
 (u_j,w_j,z_j)&\to(\bar u,\bar w,\bar z)
 &&\text{in }C_w([-T,T];H_0^1(\Omega)^3),\label{eq:translated-Cw-config}\\
 (p_j,q_j)&\to(\bar p,\bar q)
 &&\text{in }C_w([-T,T];L^2(\Omega)^2).\label{eq:translated-Cw-velocity}
\end{align}
Passing to the limit in
\[
 u_j(t)-u_j(s)=\int_s^t p_j(\xi)\,\dd\xi,
 \qquad
 w_j(t)-w_j(s)=\int_s^t q_j(\xi)\,\dd\xi
\]
shows that $\bar p=\bar u_s$ and $\bar q=\bar w_s$ in distributions. Since $2(p+1)<2^*$ under \textup{(F1)}, the pointwise estimate \eqref{eq:f-difference-pointwise} and \eqref{eq:translated-strong} imply
\begin{equation}\label{eq:translated-f-strong}
 f(u_j)\to f(\bar u)
 \quad\text{strongly in }C([-T,T];L^2(\Omega)).
\end{equation}
Thus one may pass to the limit in the translated equations.

On the other hand, \eqref{eq:semiflow-energy} gives
\[
 \int_0^\infty\norm{\nabla z(t)}_2^2\,\dd t<\infty.
\]
Hence
\begin{equation}\label{eq:translated-dissipation-zero}
 \int_{-T}^T\norm{\nabla z_j(s)}_2^2\,\dd s
 =\int_{t_j-T}^{t_j+T}\norm{\nabla z(t)}_2^2\,\dd t\longrightarrow0.
\end{equation}
It follows that $\bar z\equiv0$. Passing to the limit in $\tau z_{j,s}+z_j=q_j$ gives $\bar q=\bar w_s=0$. Therefore $\bar w$ is independent of $s$. The second equation of \eqref{eq:augmented-system} reduces to
\begin{equation}\label{eq:limit-second-stationary}
 -\Delta\bar w+\alpha\bar u=0.
\end{equation}
Since $\alpha\ne0$, $\bar u$ is independent of $s$ as well. The first equation then yields the stationary system
\begin{equation}\label{eq:stationary-limit}
 \begin{cases}
 -\Delta\bar u+\alpha\bar w=f(\bar u),\\
 -\Delta\bar w+\alpha\bar u=0.
 \end{cases}
\end{equation}
Testing by $\bar u$ and $\bar w$ and adding gives
\begin{equation}\label{eq:stationary-I-zero}
 I(\bar u,\bar w)=0.
\end{equation}

Let $E_\infty=\lim_{t\to\infty}E(t)$. By \eqref{eq:translated-strong}, $u_j(0)\to\bar u$ in $L^{p+2}(\Omega)$ and $u_j(0),w_j(0)\to\bar u,\bar w$ in $L^2(\Omega)$. Weak lower semicontinuity of the coercive quadratic form therefore gives
\begin{equation}\label{eq:J-limit-bound}
 J(\bar u,\bar w)
 \le\liminf_{j\to\infty}J(u(t_j),w(t_j))
 \le E_\infty<d.
\end{equation}
If $(\bar u,\bar w)\ne(0,0)$, then \eqref{eq:stationary-I-zero} places the pair on $\Ncal$, and $J(\bar u,\bar w)\ge d$, contradicting \eqref{eq:J-limit-bound}. Thus $\bar u=\bar w=0$. Since $\bar p=\bar u_s$ and $\bar q=\bar w_s$, one has $\bar p=\bar q=0$, and also $\bar z=0$. Evaluating \eqref{eq:translated-Cw-config}--\eqref{eq:translated-Cw-velocity} at $s=0$ shows that, along the extracted subsequence,
\[
 Y(t_j)\rightharpoonup0\qquad\text{in }\Hcal.
\]
Hence every sequence $t_j\to\infty$ has a subsequence converging weakly to zero. Since the orbit is bounded, the standard sequential criterion in a Hilbert space gives \eqref{eq:weak-convergence-H}.

Finally, compact embedding upgrades weak convergence of the configuration components to \eqref{eq:strong-lower-convergence}. The potential convergence follows from \eqref{eq:F-eps} and strong convergence in $L^{p+2}(\Omega)$; the coupling term follows from strong $L^2$ convergence.
\end{proof}

\begin{corollary}[Precompactness upgrade to strong asymptotic stability]\label{cor:strong-precompact}
Under the hypotheses of Theorem~\ref{thm:weak-asymptotic}, if the orbit
\begin{equation}\label{eq:precompact-orbit}
 \{Y(t):t\ge0\}
\end{equation}
is relatively compact in $\Hcal$, then
\begin{equation}\label{eq:strong-H-convergence}
 Y(t)\to0
 \qquad\text{strongly in }\Hcal,
\end{equation}
and
\begin{equation}\label{eq:energy-to-zero}
 E(t)\to0.
\end{equation}
\end{corollary}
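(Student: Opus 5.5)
The plan is to combine the weak convergence from Theorem~\ref{thm:weak-asymptotic} with relative compactness. Weak and strong limits coincide along subsequences, so every strong cluster point must be zero.

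Take an arbitrary sequence $t_j\to\infty$. Relative compactness of the orbit \eqref{eq:precompact-orbit} gives a subsequence, still written $t_j$, and some $\overline Y\in\Hcal$ with $Y(t_j)\to\overline Y$ strongly in $\Hcal$. Strong convergence implies weak convergence. By \eqref{eq:weak-convergence-H}, $Y(t_j)\rightharpoonup0$, and uniqueness of weak limits forces $\overline Y=0$. Hence every sequence $t_j\to\infty$ has a subsequence along which $Y(t_j)\to0$ strongly, and the usual subsequence principle in the metric space $\Hcal$ yields \eqref{eq:strong-H-convergence}. An equivalent route is to note that the $\omega$-limit set is nonempty by compactness and is contained in the set of weak limits, which is $\{0\}$.

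For \eqref{eq:energy-to-zero}, I will use continuity of $E:\Hcal\to\R$. The quadratic part of \eqref{eq:phase-energy} is continuous on $\Hcal$, and $\Phi$ is continuous on $H_0^1(\Omega)$ by \eqref{eq:Phi-derivative}. Since $E(0_\Hcal)=0$, strong convergence gives $E(t)\to0$. A more elementary alternative is to bound $E(t)\le C\norm{Y(t)}_\Hcal^2+\abs{\Phi(u(t))}$ and apply \eqref{eq:F-eps} with Sobolev embedding.

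I expect no real obstacle. The only point needing care is that precompactness is assumed in the strong topology of $\Hcal$, so that subsequence limits exist in $\Hcal$ and can be identified with the weak limit supplied by Theorem~\ref{thm:weak-asymptotic}; this requires the nonzero coupling \eqref{eq:nonzero-coupling}.
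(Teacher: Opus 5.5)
Your proposal is correct and follows the paper's argument. The paper argues by contradiction: a sequence $t_j\to\infty$ with $\norm{Y(t_j)}_{\Hcal}\ge\varepsilon$ has a strongly convergent subsequence, and its limit must equal the weak limit $0$. This is equivalent to your subsequence principle, and the paper likewise obtains $E(t)\to0$ from the continuity of $E$ on $\Hcal$.
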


\begin{proof}
If \eqref{eq:strong-H-convergence} failed, there would exist $\varepsilon>0$ and $t_j\to\infty$ such that $\norm{Y(t_j)}_{\Hcal}\ge\varepsilon$. Relative compactness gives a strongly convergent subsequence. By Theorem~\ref{thm:weak-asymptotic}, its weak limit is zero, hence its strong limit is zero, a contradiction. Continuity of the energy on $\Hcal$ gives \eqref{eq:energy-to-zero}.
\end{proof}

\subsection{High-frequency flux and finite-variation criteria}\label{subsec:hf-criteria}
Let $(\lambda_k,e_k)$ be the eigenpairs of $A_D$:
\begin{equation}\label{eq:Dirichlet-eigenpairs}
 A_De_k=\lambda_ke_k,
 \qquad 0<\lambda_1\le\lambda_2\le\cdots,
 \qquad \lambda_k\to\infty.
\end{equation}
Let $P_N$ be the $L^2$-orthogonal projection onto $\operatorname{span}\{e_1,\dots,e_N\}$ and $Q_N=I-P_N$. These projections commute with $A_D$ and are contractions on $H_0^1(\Omega)$. Define
\begin{equation}\label{eq:sigma-star}
 \sigma_*=
 \begin{cases}
 1,&n=1,2,\\[1mm]
 1-\dfrac{(n-2)p}{2},&n\ge3.
 \end{cases}
\end{equation}
By \textup{(F1)}, $\sigma_*>0$.

For a stable solution, set
\begin{align}\label{eq:high-energy}
 \Ecal_N(t)=\frac12\Bigl(&\norm{Q_Nu_t}_2^2+\norm{Q_Nw_t}_2^2
 +\tau D\norm{\nabla Q_Nz}_2^2\notag\\
 &+Q_\alpha(Q_Nu,Q_Nw)\Bigr).
\end{align}
By Lemma~\ref{lem:Q-coercive}, there exist $c,C>0$, independent of $N$ and $t$, such that
\begin{equation}\label{eq:high-energy-equivalence}
 c\norm{\mathbf Q_NY(t)}_{\Hcal}^2
 \le\Ecal_N(t)
 \le C\norm{\mathbf Q_NY(t)}_{\Hcal}^2,
\end{equation}
where $\mathbf Q_N$ acts componentwise.

\begin{lemma}[Renormalized high-frequency balance]\label{lem:high-frequency-balance}
Let $Y(t)$ be the solution from Theorem~\ref{thm:stable-global}. Define
\begin{align}
 \Pcal_N(t)&=\int_\Omega\bigl[F(u(t))-F(P_Nu(t))\bigr]\,\dd x,
 \label{eq:potential-correction}\\
 \Rcal_N(t)&=\ip{f(u(t))-f(P_Nu(t))}{P_Nu_t(t)}.
 \label{eq:high-low-flux}
\end{align}
Then, in the integrated sense, for every $0\le S\le T$,
\begin{align}\label{eq:renormalized-high-balance}
 &\Ecal_N(T)-\Pcal_N(T)+D\int_S^T\norm{\nabla Q_Nz(t)}_2^2\,\dd t\notag\\
 &\qquad=\Ecal_N(S)-\Pcal_N(S)-\int_S^T\Rcal_N(t)\,\dd t.
\end{align}
Moreover, if $M=\sup_{t\ge0}\norm{u(t)}_{H_0^1}$, then
\begin{equation}\label{eq:potential-correction-uniform}
 \delta_N(M):=\sup_{t\ge0}\abs{\Pcal_N(t)}\longrightarrow0.
\end{equation}
For every $0<\sigma<\sigma_*$, there exists $C_{M,\sigma}>0$ such that
\begin{equation}\label{eq:source-tail-rate}
 \norm{f(u)-f(P_Nu)}_2
 \le C_{M,\sigma}\lambda_{N+1}^{-\sigma/2}
\end{equation}
whenever $\norm{u}_{H_0^1}\le M$. Consequently,
\begin{equation}\label{eq:flux-pointwise-rate}
 \abs{\Rcal_N(t)}
 \le C_{M,\sigma}\lambda_{N+1}^{-\sigma/2}\norm{P_Nu_t(t)}_2.
\end{equation}
\end{lemma}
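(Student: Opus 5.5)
The argument has three parts. We first derive the balance identity \eqref{eq:renormalized-high-balance} for classical solutions by projecting the equations onto high modes. We then remove the regularity assumption by density. Finally we prove the uniform tail estimates by fractional interpolation.

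\textbf{The identity at the classical level.} Take data in $D(\Acal)$, so that the solution is classical, as in the proof of Theorem~\ref{thm:local-wp}. Apply $Q_N$ to \eqref{eq:augmented-system}; since $Q_N$ commutes with $A_D$, the projected system keeps the same linear structure. Test the projected equations by $Q_Nu_t$, $Q_Nw_t$ and $D\,Q_N\Delta z$, exactly as in \eqref{eq:generator-dissipative}. Because $Q_N$ is an orthogonal projection commuting with $A_D$, the coupling term satisfies $\ip{Q_Nu}{Q_Nw}$-compatibility, and we obtain
\[
 \Ecal_N'(t)+D\norm{\nabla Q_Nz}_2^2=\ip{Q_Nf(u)}{Q_Nu_t}=\ip{f(u)}{u_t}-\ip{f(u)}{P_Nu_t}.
\]
Now $\ip{f(u)}{u_t}=\frac{\dd}{\dd t}\int_\Omega F(u)$, and $\frac{\dd}{\dd t}\int_\Omega F(P_Nu)=\ip{f(P_Nu)}{P_Nu_t}$, using \eqref{eq:Phi-derivative} and the fact that $P_Nu\in C^1([0,T];H_0^1)$. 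Hence
\[
 \ip{Q_Nf(u)}{Q_Nu_t}=\Pcal_N'(t)-\ip{f(u)-f(P_Nu)}{P_Nu_t}=\Pcal_N'-\Rcal_N.
\]
Integrating from $S$ to $T$ gives \eqref{eq:renormalized-high-balance}.

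\textbf{Passage to energy solutions.} Approximate by $D(\Acal)$ data and use \eqref{eq:classical-approximation}. Each term of \eqref{eq:renormalized-high-balance} converges: $\Ecal_N$ and the dissipation integral are continuous on $C([0,T];\Hcal)$; $\Pcal_N$ converges because $\Phi$ is continuous on $H_0^1$; and $\Rcal_N$ converges in $C([0,T])$ by \eqref{eq:f-local-Lipschitz} together with $u_{m,t}\to u_t$ in $C([0,T];L^2)$.

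\textbf{Tail estimates.} We begin with \eqref{eq:source-tail-rate}. By \eqref{eq:f-difference-pointwise} and H\"older's inequality,
\[
 \norm{f(u)-f(P_Nu)}_2\le C\bigl(1+\norm{u}_{r}^p+\norm{P_Nu}_{r}^p\bigr)\norm{Q_Nu}_{q},
\]
where we choose the exponents $r,q$ with $p/r+1/q=1/2$. For $n\ge3$, take $r=2^*$, which is allowed since $\norm{u}_{2^*}\le CM$. Then $1/q=1/2-p(n-2)/(2n)$, and this corresponds to the Sobolev embedding $H^{1-\sigma_*}\hookrightarrow L^q$, because $1/2-(1-\sigma_*)/n=1/q$. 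For $\sigma<\sigma_*$, take a slightly smaller $r<2^*$, so that $H^{1-\sigma}_{A_D}$ embeds into the corresponding $L^q$. The spectral estimate
\[
 \norm{Q_Nu}_{H^{1-\sigma}_{A_D}}\le\lambda_{N+1}^{-\sigma/2}\norm{\nabla u}_2
\]
then gives the rate. For $n=1,2$ any finite exponents work, and the same argument gives every $\sigma<1$. Using $\norm{P_Nu}_{H_0^1}\le M$ keeps the constant uniform in $N$. The bound \eqref{eq:flux-pointwise-rate} follows by Cauchy--Schwarz.

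Finally, for \eqref{eq:potential-correction-uniform}, write
\[
 F(u)-F(P_Nu)=\int_0^1 f\bigl(P_Nu+\sigma Q_Nu\bigr)Q_Nu\,\dd\sigma.
\]
This gives $\abs{\Pcal_N}\le C(1+M^{p+1})\norm{Q_Nu}_{q'}$ for a suitable subcritical $q'$, and the same spectral decay applies. Note that $\Pcal_N$ is merely $\int F(u)-\int F(P_Nu)$, so a single interpolation handles it.

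\textbf{Main obstacle.} The delicate point is the exponent bookkeeping in the interpolation, in particular ensuring that the constants are uniform in $N$. Uniformity comes from the fact that $P_N$ and $Q_N$ are contractions on every space $H^{s}_{A_D}$.
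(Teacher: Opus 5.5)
Your proposal is correct and follows essentially the same route as the paper. It uses the same projected energy identity, rewritten via $\Pcal_N'-\Rcal_N=\ip{f(u)}{Q_Nu_t}$; the same density passage through classical approximants; and the same H\"older split, with your exponents $r=pn/(1-\sigma)$ and $q$ coinciding with the paper's $pa_\sigma$ and $q_\sigma$. The only cosmetic difference is that you obtain $\norm{Q_Nu}_{q_\sigma}\lesssim\lambda_{N+1}^{-\sigma/2}M$ from the fractional embedding $D(A_D^{(1-\sigma)/2})\hookrightarrow L^{q_\sigma}$, whereas the paper uses the more elementary Lebesgue interpolation $\norm{Q_Nu}_{q_\sigma}\le C\norm{Q_Nu}_2^\sigma\norm{\nabla Q_Nu}_2^{1-\sigma}$ together with $\norm{Q_Nu}_2\le\lambda_{N+1}^{-1/2}\norm{\nabla u}_2$.
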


\begin{proof}
For classical solutions, applying $Q_N$ to the first two equations in \eqref{eq:augmented-system}, testing by $Q_Nu_t$ and $Q_Nw_t$, and using $w_t=z+\tau z_t$ gives
\begin{equation}\label{eq:raw-high-balance}
 \frac{\dd}{\dd t}\Ecal_N(t)+D\norm{\nabla Q_Nz(t)}_2^2
 =\ip{f(u(t))}{Q_Nu_t(t)}.
\end{equation}
On the other hand,
\begin{align*}
 \frac{\dd}{\dd t}\Pcal_N(t)
 &=\ip{f(u)}{u_t}-\ip{f(P_Nu)}{P_Nu_t}\\
 &=\ip{f(u)}{Q_Nu_t}+\ip{f(u)-f(P_Nu)}{P_Nu_t}.
\end{align*}
Subtracting gives the differential form of \eqref{eq:renormalized-high-balance}. We now pass from classical to energy solutions. Fix $N\in\N$ and $0\le S\le T<T_{\max}$, and let $Y_m$ be the classical approximants from \eqref{eq:classical-approximation}. Since $P_N$ and $Q_N$ are bounded on every component of $\Hcal$,
\[
 \Ecal_{N,m}\to\Ecal_N,
 \qquad
 \Pcal_{N,m}\to\Pcal_N
 \quad\text{in }C([0,T]).
\]
Here the second convergence also uses $u_m\to u$ in $C([0,T];H_0^1)$ and the continuity of $\Phi$. Furthermore,
\[
 Q_Nz_m\to Q_Nz\quad\text{in }C([0,T];H_0^1),
\]
so the dissipative integrals converge. Local Lipschitz continuity of the Nemytskii map $f:H_0^1\to L^2$, together with the finite-rank boundedness of $P_N$, gives
\[
 f(u_m)-f(P_Nu_m)\to f(u)-f(P_Nu)
 \quad\text{in }C([0,T];L^2),
\]
while $P_Nu_{m,t}\to P_Nu_t$ in $C([0,T];L^2)$. Consequently,
\[
 \Rcal_{N,m}\to\Rcal_N\quad\text{in }C([0,T]),
\]
and the flux integrals converge. Passing to the limit in the integrated classical identity proves \eqref{eq:renormalized-high-balance} for every energy solution.

To prove \eqref{eq:potential-correction-uniform}, the mean-value theorem and \eqref{eq:F1-growth} imply
\begin{equation}\label{eq:F-difference}
 \abs{F(a)-F(b)}
 \le C\bigl(1+\abs{a}^{p+1}+\abs{b}^{p+1}\bigr)\abs{a-b}.
\end{equation}
Since $p+2<2^*$, interpolation with the spectral estimate
\[
 \norm{Q_Nu}_2\le\lambda_{N+1}^{-1/2}\norm{\nabla u}_2
\]
gives, in every dimension,
\[
 \sup_{\norm{u}_{H_0^1}\le M}\norm{Q_Nu}_{p+2}\longrightarrow0;
\]
when $n\le2$, one first fixes $R>p+2$ and uses $H_0^1(\Omega)\hookrightarrow L^R(\Omega)$.
H\"older's inequality in \eqref{eq:F-difference} yields \eqref{eq:potential-correction-uniform}.

We prove \eqref{eq:source-tail-rate}. Suppose first that $n\ge3$. For $0<\sigma<\sigma_*$, define $q_\sigma$ by
\begin{equation}\label{eq:q-sigma-highdim}
 \frac1{q_\sigma}=\frac\sigma2+\frac{1-\sigma}{2^*}.
\end{equation}
Then
\[
 \frac12-\frac1{q_\sigma}=\frac{1-\sigma}{n}.
\]
Set $a_\sigma=n/(1-\sigma)$. The inequality $\sigma<\sigma_*$ is exactly $pa_\sigma<2^*$. Hence $H_0^1(\Omega)\hookrightarrow L^{pa_\sigma}(\Omega)$, and H\"older's inequality gives
\begin{align*}
 \norm{f(u)-f(P_Nu)}_2
 &\le C_M\bigl(\norm{Q_Nu}_2+\norm{Q_Nu}_{q_\sigma}\bigr).
\end{align*}
Interpolation between $L^2$ and $L^{2^*}$, together with the spectral estimate
\[
 \norm{Q_Nu}_2\le\lambda_{N+1}^{-1/2}\norm{\nabla u}_2,
\]
yields
\[
 \norm{Q_Nu}_{q_\sigma}
 \le C\norm{Q_Nu}_2^\sigma\norm{\nabla Q_Nu}_2^{1-\sigma}
 \le C_M\lambda_{N+1}^{-\sigma/2}.
\]

If $n=1,2$, choose $q_\sigma=2/\sigma$ and $a_\sigma=2/(1-\sigma)$. Then $1/2=1/a_\sigma+1/q_\sigma$, and $H_0^1(\Omega)$ embeds into $L^{pa_\sigma}(\Omega)$ for every finite $pa_\sigma$. The Gagliardo--Nirenberg inequality, or interpolation between $L^2$ and $L^\infty$ when $n=1$, gives
\[
 \norm{Q_Nu}_{q_\sigma}\le C_M\lambda_{N+1}^{-\sigma/2}.
\]
Thus \eqref{eq:source-tail-rate} holds in every dimension. Equation \eqref{eq:flux-pointwise-rate} is immediate from \eqref{eq:high-low-flux}.
\end{proof}

The flux condition suggested by \eqref{eq:renormalized-high-balance} is
\begin{equation*}
 \tag{HF}
 \lim_{N\to\infty}\sup_{T\ge0}
 \abs{\int_0^T\Rcal_N(t)\,\dd t}=0.
\end{equation*}
A more directly verifiable sufficient condition is, for some $0<\sigma<\sigma_*$,
\begin{equation*}
 \tag*{$(\mathrm{SV})_\sigma$}
 \lambda_{N+1}^{-\sigma/2}
 \int_0^\infty\norm{P_Nu_t(t)}_2\,\dd t\longrightarrow0.
\end{equation*}

\begin{theorem}[High-frequency flux characterization and spectral finite variation]\label{thm:HF-SV}
Assume the hypotheses of Theorem~\ref{thm:weak-asymptotic}. Then the following assertions are equivalent:
\begin{enumerate}[label=\textup{(\roman*)},leftmargin=2.2em]
\item condition \textup{(HF)} holds;
\item the orbit $\{Y(t):t\ge0\}$ is relatively compact in $\Hcal$;
\item $Y(t)\to0$ strongly in $\Hcal$ as $t\to\infty$.
\end{enumerate}
Moreover, if $\mathrm{(SV)}_\sigma$ holds for some $0<\sigma<\sigma_*$, then these equivalent assertions are satisfied.
\end{theorem}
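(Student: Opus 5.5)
The proof will run the cycle (ii)$\Leftrightarrow$(iii), then (iii)$\Rightarrow$(i), then (i)$\Rightarrow$(ii), and finally $\mathrm{(SV)}_\sigma\Rightarrow$(HF). The equivalence (ii)$\Rightarrow$(iii) is Corollary~\ref{cor:strong-precompact}. Conversely, if $Y(t)\to0$ strongly, then the orbit is the continuous image of $[0,\infty]$ (with $Y(\infty)=0$), so it is compact; this gives (iii)$\Rightarrow$(ii). It therefore suffices to show (iii)$\Rightarrow$(i) and (i)$\Rightarrow$(ii), and then that $\mathrm{(SV)}_\sigma$ implies (HF).

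For (iii)$\Rightarrow$(i), we use the renormalized balance \eqref{eq:renormalized-high-balance} with $S=0$ to solve for the flux integral:
\[
 \int_0^T\Rcal_N\,\dd t=\Ecal_N(0)-\Pcal_N(0)-\Ecal_N(T)+\Pcal_N(T)-D\int_0^T\norm{\nabla Q_Nz}_2^2\,\dd t.
\]
Several terms are uniformly small in $T$ as $N\to\infty$.
\begin{enumerate}[label=\textup{(\alph*)},leftmargin=2.2em]
\item The terms $\Ecal_N(0)\to0$ and $\sup_T\abs{\Pcal_N(T)}\le\delta_N(M)\to0$, by \eqref{eq:potential-correction-uniform}.
\item The term $\sup_T\Ecal_N(T)\le C\sup_t\norm{\mathbf Q_NY(t)}_{\Hcal}^2$. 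This tends to zero because the orbit is relatively compact, and $\mathbf Q_N\to0$ uniformly on compact sets, since $\norm{\mathbf Q_N}\le1$ and $\mathbf Q_N\to0$ strongly.
\item The dissipative term is bounded by $D\int_0^\infty\norm{\nabla Q_Nz}_2^2\,\dd t$. This tends to zero by dominated convergence, since $\int_0^\infty\norm{\nabla z}_2^2<\infty$ by \eqref{eq:semiflow-energy}.
\end{enumerate}
Hence (HF) holds.

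For (i)$\Rightarrow$(ii), which is the main step, we read the same balance as an estimate for the tail energy:
\[
 \Ecal_N(T)\le\Ecal_N(0)+2\delta_N(M)+\sup_{T'}\abs{\int_0^{T'}\Rcal_N\,\dd t}.
\]
Here we drop the nonnegative dissipation integral. Under (HF) the right-hand side tends to zero, uniformly in $T$. By \eqref{eq:high-energy-equivalence}, $\sup_{t\ge0}\norm{\mathbf Q_NY(t)}_{\Hcal}\to0$. For each fixed $N$, $\mathbf P_N Y(t)$ ranges over a bounded subset of a finite-dimensional space, by Theorem~\ref{thm:stable-global}. A set that lies uniformly close to bounded finite-dimensional sets is totally bounded, exactly as in the proof of Proposition~\ref{prop:linear-strong-stability}. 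This gives (ii).

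The point requiring care is that the balance must be used with $S=0$ and the uniformity in $T$ of (HF) must be exploited; the sign of the dissipation is favorable in one direction and controlled by finiteness of total dissipation in the other, so no circularity arises.

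Finally, $\mathrm{(SV)}_\sigma\Rightarrow$(HF) follows directly from \eqref{eq:flux-pointwise-rate}:
\[
 \sup_T\abs{\int_0^T\Rcal_N\,\dd t}\le C_{M,\sigma}\lambda_{N+1}^{-\sigma/2}\int_0^\infty\norm{P_Nu_t}_2\,\dd t\to0,
\]
where $M$ is the uniform $H_0^1$ bound from \eqref{eq:stable-uniform-bound}.
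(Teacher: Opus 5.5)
Your proposal is correct and follows essentially the same argument as the paper. It uses the renormalized balance with $S=0$, dropping the dissipation, to get (HF)$\Rightarrow$ relative compactness. For the converse it uses uniform convergence of $\mathbf Q_N$ on the compact orbit closure, with dominated convergence for the tail dissipation. Corollary~\ref{cor:strong-precompact} and compactness of a convergent trajectory's range give (ii)$\Leftrightarrow$(iii), and \eqref{eq:flux-pointwise-rate} handles $\mathrm{(SV)}_\sigma$. Your converse is nominally routed through (iii), but step (b) actually invokes relative compactness of the orbit, exactly as the paper's (ii)$\Rightarrow$(i) does.
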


\begin{proof}
Assume first that \textup{(HF)} holds. Integrating \eqref{eq:renormalized-high-balance} from $0$ to $T$ and discarding the nonnegative dissipation term gives
\begin{equation}\label{eq:high-tail-bound}
 \Ecal_N(T)
 \le\Ecal_N(0)+2\delta_N(M)
 +\abs{\int_0^T\Rcal_N(t)\,\dd t}.
\end{equation}
Because the initial state belongs to $\Hcal$, $\Ecal_N(0)\to0$. Lemma~\ref{lem:high-frequency-balance} gives $\delta_N(M)\to0$, and hence
\begin{equation}\label{eq:uniform-tail-vanishing}
 \lim_{N\to\infty}\sup_{t\ge0}\Ecal_N(t)=0.
\end{equation}
By \eqref{eq:high-energy-equivalence}, the componentwise high-frequency tails of the orbit vanish uniformly. The low-frequency part lies in a bounded subset of a finite-dimensional space. Hence the orbit is totally bounded, and therefore relatively compact, in $\Hcal$. Corollary~\ref{cor:strong-precompact} then gives strong convergence. Thus \textup{(i)} implies \textup{(ii)}, and \textup{(ii)} implies \textup{(iii)}.

Conversely, assume that the orbit is relatively compact. Since the componentwise operators $\mathbf Q_N$ are uniformly bounded on $\Hcal$ and converge strongly to zero, their convergence is uniform on the compact orbit closure. By \eqref{eq:high-energy-equivalence},
\begin{equation}\label{eq:precompact-uniform-tail}
 \sup_{t\ge0}\Ecal_N(t)\longrightarrow0.
\end{equation}
Rearranging \eqref{eq:renormalized-high-balance} with $S=0$ yields
\begin{align*}
 \int_0^T\Rcal_N(t)\,\dd t
 ={}&\Ecal_N(0)-\Pcal_N(0)-\Ecal_N(T)+\Pcal_N(T)\\
 &-D\int_0^T\norm{\nabla Q_Nz(t)}_2^2\,\dd t.
\end{align*}
Therefore,
\begin{align}\label{eq:precompact-implies-HF-bound}
 \sup_{T\ge0}\abs{\int_0^T\Rcal_N(t)\,\dd t}
 \le{}&\Ecal_N(0)+\sup_{t\ge0}\Ecal_N(t)+2\delta_N(M)\notag\\
 &+D\int_0^\infty\norm{\nabla Q_Nz(t)}_2^2\,\dd t.
\end{align}
The first three terms tend to zero by \eqref{eq:precompact-uniform-tail} and \eqref{eq:potential-correction-uniform}. Moreover,
\[
 \int_0^\infty\norm{\nabla z(t)}_2^2\,\dd t<\infty
\]
by \eqref{eq:semiflow-energy}, while $\nabla Q_Nz(t)\to0$ for almost every $t$ and
$\norm{\nabla Q_Nz(t)}_2\le\norm{\nabla z(t)}_2$. Dominated convergence shows that the last term in \eqref{eq:precompact-implies-HF-bound} also tends to zero. Thus \textup{(ii)} implies \textup{(i)}.

Finally, \textup{(iii)} implies \textup{(ii)} because a continuous trajectory converging in $\Hcal$ has relatively compact range: its restriction to every compact time interval is compact, while its tail is contained in an arbitrarily small ball about the limit. This proves the equivalence.

If $\mathrm{(SV)}_\sigma$ holds, \eqref{eq:flux-pointwise-rate} gives
\[
 \sup_{T\ge0}\abs{\int_0^T\Rcal_N(t)\,\dd t}
 \le C_{M,\sigma}\lambda_{N+1}^{-\sigma/2}
 \int_0^\infty\norm{P_Nu_t(t)}_2\,\dd t\longrightarrow0.
\]
Hence \textup{(HF)} holds.
\end{proof}

\begin{lemma}[Potential chain rule along energy trajectories]\label{lem:energy-potential-chain}
Let $T>0$ and suppose
\[
 u\in C([0,T];H_0^1(\Omega))\cap C^1([0,T];L^2(\Omega)).
\]
Then the map $t\mapsto\Phi(u(t))$, where $\Phi(u)=\int_\Omega F(u)\,\dd x$, belongs to $C^1([0,T])$ and
\begin{equation}\label{eq:energy-potential-chain}
 \frac{\dd}{\dd t}\Phi(u(t))=\ip{f(u(t))}{u_t(t)}.
\end{equation}
\end{lemma}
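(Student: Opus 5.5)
The proof rests on Lemma~\ref{lem:nemytskii-C1} through \eqref{eq:Phi-derivative}: $\Phi\in C^1(H_0^1(\Omega);\R)$ with $\Phi'(v)h=\ip{f(v)}{h}$. The difficulty is that $u$ is differentiable in time only with values in $L^2(\Omega)$, not in $H_0^1(\Omega)$. The naive chain rule therefore does not apply directly, because $\Phi$ is not differentiable on $L^2$.

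\textbf{Step 1: difference quotient.} For $t,t+h\in[0,T]$, write
\[
 \Phi(u(t+h))-\Phi(u(t))=\int_0^1\ip{f\bigl(u(t)+\sigma(u(t+h)-u(t))\bigr)}{u(t+h)-u(t)}\,\dd\sigma.
\]
This identity is the fundamental theorem of calculus along the segment in $H_0^1$, valid by \eqref{eq:Phi-derivative}. The key observation is that the pairing is the $L^2$ inner product. Only $L^2$-control of the increment is needed in the second slot, while the first slot is an $L^2$-valued function of an $H_0^1$ argument. Dividing by $h$, we get $h^{-1}(u(t+h)-u(t))\to u_t(t)$ in $L^2$. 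Since $u\in C([0,T];H_0^1)$, the intermediate points $u(t)+\sigma(u(t+h)-u(t))$ converge to $u(t)$ in $H_0^1$ uniformly in $\sigma$. Continuity of the Nemytskii map $H_0^1\to L^2$ (Lemma~\ref{lem:source-estimates}, estimate \eqref{eq:f-local-Lipschitz}) then gives $f(\cdot)\to f(u(t))$ in $L^2$ uniformly in $\sigma$. The integrand therefore converges to $\ip{f(u(t))}{u_t(t)}$, which proves differentiability with derivative given by \eqref{eq:energy-potential-chain}, including one-sided derivatives at the endpoints.

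\textbf{Step 2: continuity of the derivative.} The map $t\mapsto f(u(t))$ is continuous into $L^2$, again by \eqref{eq:f-local-Lipschitz} with $R=2\sup_t\norm{\nabla u(t)}_2$. The map $t\mapsto u_t(t)$ is continuous into $L^2$ by hypothesis. Their inner product is therefore continuous on $[0,T]$, so $\Phi\circ u\in C^1([0,T])$.

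\textbf{Main point of care.} The only delicate step is the uniform-in-$\sigma$ convergence of $f$ evaluated at the intermediate points. The local Lipschitz bound handles it, since all intermediate points lie in a fixed $H_0^1$-ball of radius $\sup_t\norm{\nabla u(t)}_2$, which is finite by compactness of $[0,T]$. An alternative route would be mollification in time, or approximation via $D(\Acal)$ data as in Theorem~\ref{thm:local-wp}, but the direct difference-quotient argument avoids any regularization.
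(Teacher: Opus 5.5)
Your proposal is correct and follows essentially the same argument as the paper. Both proofs write the difference quotient as an integral over the segment of $\ip{f(\cdot)}{\cdot}$, use $L^2$-convergence of $h^{-1}(u(t+h)-u(t))$ together with uniform-in-$\rho$ convergence of $f$ at the intermediate points (via the local Lipschitz bound \eqref{eq:f-local-Lipschitz}), and deduce continuity of the derivative from the continuity of $f(u)$ and $u_t$ in $L^2(\Omega)$.
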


\begin{proof}
For $h\ne0$ with $t+h\in[0,T]$, the scalar mean-value formula gives
\begin{align*}
 \frac{\Phi(u(t+h))-\Phi(u(t))}{h}
 =\int_0^1\ip{f\bigl(u(t)+\rho[u(t+h)-u(t)]\bigr)}
 {\dfrac{u(t+h)-u(t)}{h}}\,\dd\rho.
\end{align*}
The difference quotient converges to $u_t(t)$ in $L^2(\Omega)$. Since $u(t+h)\to u(t)$ in $H_0^1(\Omega)$ and the Nemytskii map $f:H_0^1(\Omega)\to L^2(\Omega)$ is locally Lipschitz by Lemma~\ref{lem:source-estimates}, the first factor converges to $f(u(t))$ in $L^2(\Omega)$, uniformly for $\rho\in[0,1]$. This proves \eqref{eq:energy-potential-chain}; continuity of the derivative follows from the continuity of $f(u)$ and $u_t$ in $L^2(\Omega)$.
\end{proof}

For a stable energy solution, set $g(t)=f(u(t))$. We shall use the nonlinear-force variation condition
\begin{equation*}
 \tag{SFV}
 g\in AC_{\mathrm{loc}}([0,\infty);L^2(\Omega)),
 \qquad
 \Vcal(u):=\int_0^\infty\norm{g_t(t)}_2\,\dd t<\infty.
\end{equation*}
For a regular trajectory, the chain rule in Appendix~\ref{app:regular-estimates} identifies $g_t=f'(u)u_t$ in $L^2(\Omega)$.

\begin{corollary}[Finite variation of the nonlinear force]\label{cor:source-variation}
Assume the hypotheses of Theorem~\ref{thm:weak-asymptotic}. If \textup{(SFV)} holds, then \textup{(HF)} holds. Consequently,
\begin{equation}\label{eq:SFV-strong-convergence}
 Y(t)\to0\qquad\text{strongly in }\Hcal,
 \qquad E(t)\to0.
\end{equation}
\end{corollary}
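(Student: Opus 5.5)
The plan is to verify (HF) directly by an integration by parts in time that moves the time derivative from $u_t$ onto the nonlinear force $g=f(u)$. Theorem~\ref{thm:HF-SV} and Corollary~\ref{cor:strong-precompact} then give \eqref{eq:SFV-strong-convergence}. Write $M=\sup_{t\ge0}\norm{u(t)}_{H_0^1}$, which is finite by Theorem~\ref{thm:stable-global}.

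\textbf{Step 1: splitting the flux.} Decompose
\[
\Rcal_N=\ip{g}{P_Nu_t}-\ip{f(P_Nu)}{P_Nu_t}
=\ip{g}{u_t}-\ip{g}{Q_Nu_t}-\ip{f(P_Nu)}{P_Nu_t}.
\]
By Lemma~\ref{lem:energy-potential-chain}, applied to $u$ and to $P_Nu$ (both lie in $C([0,T];H_0^1)\cap C^1([0,T];L^2)$), the first and third terms are $\frac{\dd}{\dd t}\Phi(u)$ and $\frac{\dd}{\dd t}\Phi(P_Nu)$. Their combined contribution to $\int_0^T\Rcal_N\,\dd t$ is $\Pcal_N(T)-\Pcal_N(0)$, which is bounded by $2\delta_N(M)\to0$ uniformly in $T$ by \eqref{eq:potential-correction-uniform}.

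\textbf{Step 2: the middle term.} Since $Q_N$ is self-adjoint and idempotent, $\ip{g}{Q_Nu_t}=\ip{Q_Ng}{Q_Nu_t}$. Under (SFV), $Q_Ng\in AC_{\rm loc}([0,\infty);L^2)$ and $Q_Nu\in C^1([0,\infty);L^2)$. The product rule for the $L^2$ pairing of an absolutely continuous function with a $C^1$ function then gives
\[
\int_0^T\ip{Q_Ng}{Q_Nu_t}\,\dd t=\Bigl[\ip{Q_Ng}{Q_Nu}\Bigr]_0^T-\int_0^T\ip{Q_Ng_t}{Q_Nu}\,\dd t .
\]
Using $\norm{Q_Nu}_2\le\lambda_{N+1}^{-1/2}M$ and the uniform bound $\sup_t\norm{g(t)}_2\le C_M$ from Lemma~\ref{lem:source-estimates}, the boundary term is at most $2C_MM\lambda_{N+1}^{-1/2}$. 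The integral term is at most $M\lambda_{N+1}^{-1/2}\Vcal(u)$.

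\textbf{Step 3: conclusion.} Combining the two steps,
\[
\sup_{T\ge0}\abs{\int_0^T\Rcal_N\,\dd t}\le2\delta_N(M)+M\lambda_{N+1}^{-1/2}\bigl(2C_M+\Vcal(u)\bigr)\longrightarrow0,
\]
which is (HF). Theorem~\ref{thm:HF-SV} then gives strong convergence $Y(t)\to0$ in $\Hcal$. Continuity of $E$ on $\Hcal$, as in Corollary~\ref{cor:strong-precompact}, gives $E(t)\to0$.

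\textbf{Main obstacle.} The only delicate point is justifying the product rule in Step~2 at the regularity available, since $g$ is only $AC_{\rm loc}$ with values in $L^2$. I would argue as follows: $g_t$ exists a.e. and is Bochner integrable on compact intervals, $g(t)=g(0)+\int_0^tg_t$, and $Q_N$ is bounded on $L^2$. Expanding the difference quotient of $\ip{Q_Ng}{Q_Nu}$ then gives the derivative $\ip{Q_Ng_t}{Q_Nu}+\ip{Q_Ng}{Q_Nu_t}$ a.e. The function $t\mapsto\ip{Q_Ng}{Q_Nu}$ is absolutely continuous, since it is a pairing of an absolutely continuous and a Lipschitz $L^2$-valued function. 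Hence the fundamental theorem of calculus applies and yields the identity used in Step~2.
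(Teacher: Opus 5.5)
Your proposal is correct and is essentially the paper's proof. Both arguments reach the identity $\int_0^T\Rcal_N\,\dd t=\bigl[\Pcal_N-\ip{g}{Q_Nu}\bigr]_0^T+\int_0^T\ip{g_t}{Q_Nu}\,\dd t$ and the same bound $2\delta_N(M)+M\lambda_{N+1}^{-1/2}\bigl(2C_M+\Vcal(u)\bigr)$; the only difference is that the paper packages the time integration by parts through the auxiliary functionals $\mathfrak B_N=\ip{g}{P_Nu}-\Phi(P_Nu)$ and $\mathfrak B=\ip{g}{u}-\Phi(u)$, whereas you split $\Rcal_N$ directly, and your justification of the $AC_{\rm loc}$--$C^1$ product rule is slightly more explicit than the paper's.
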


\begin{proof}
Let $\Phi$ be as in Lemma~\ref{lem:energy-potential-chain}, and define
\[
 \mathfrak B_N(t)=\ip{g(t)}{P_Nu(t)}-\Phi(P_Nu(t)),
 \qquad
 \mathfrak B(t)=\ip{g(t)}{u(t)}-\Phi(u(t)).
\]
The Hilbert-space product rule, Lemma~\ref{lem:energy-potential-chain}, and the finite-dimensional chain rule for $P_Nu$ give, for almost every $t$,
\begin{align*}
 \mathfrak B_N'(t)
 &=\ip{g_t(t)}{P_Nu(t)}+\Rcal_N(t),\\
 \mathfrak B'(t)&=\ip{g_t(t)}{u(t)}.
\end{align*}
Hence
\begin{equation}\label{eq:SFV-flux-decomposition}
 \Rcal_N(t)=\frac{\dd}{\dd t}\bigl(\mathfrak B_N(t)-\mathfrak B(t)\bigr)
 +\ip{g_t(t)}{Q_Nu(t)}.
\end{equation}
Moreover,
\begin{equation}\label{eq:SFV-boundary-correction}
 \mathfrak B_N(t)-\mathfrak B(t)
 =-\ip{g(t)}{Q_Nu(t)}+\Pcal_N(t).
\end{equation}
Let $M=\sup_{t\ge0}\norm{u(t)}_{H_0^1}$. By Lemma~\ref{lem:source-estimates} and the stable bound,
\[
 C_M:=\sup_{t\ge0}\norm{g(t)}_2<\infty.
\]
The spectral estimate gives
\begin{equation}\label{eq:SFV-QN-L2}
 \sup_{t\ge0}\norm{Q_Nu(t)}_2
 \le M\lambda_{N+1}^{-1/2}.
\end{equation}
Integrating \eqref{eq:SFV-flux-decomposition}, using \eqref{eq:SFV-boundary-correction}, and taking the supremum in $T$ yield
\begin{align*}
 \sup_{T\ge0}\abs{\int_0^T\Rcal_N(t)\,\dd t}
 \le{}&2\delta_N(M)+2C_MM\lambda_{N+1}^{-1/2}\\
 &+M\lambda_{N+1}^{-1/2}\Vcal(u).
\end{align*}
The right-hand side tends to zero by \eqref{eq:potential-correction-uniform}. Thus \textup{(HF)} holds, and Theorem~\ref{thm:HF-SV} proves \eqref{eq:SFV-strong-convergence}.
\end{proof}

For $s\ge0$, define the fractional Dirichlet scale by
\[
 D(A_D^{s/2})
 =\left\{g=\sum_{k\ge1}g_ke_k\in L^2(\Omega):
 \sum_{k\ge1}\lambda_k^s\abs{g_k}^2<\infty\right\},
\]
endowed with its natural graph norm. Using $L^2(\Omega)$ as pivot space, set
\[
 H_{A_D}^{-s}(\Omega)=\bigl(D(A_D^{s/2})\bigr)'.
\]
Equivalently, $H_{A_D}^{-s}(\Omega)$ is the completion of $L^2(\Omega)$ with respect to the norm
\begin{equation}\label{eq:negative-dirichlet-scale}
 \norm{g}_{-s,A_D}^2
 =\sum_{k\ge1}\lambda_k^{-s}\abs{g_k}^2,
 \qquad g=\sum_{k\ge1}g_ke_k.
\end{equation}

\begin{corollary}[Finite variation in a negative spatial scale]\label{cor:negative-finite-variation}
Assume the hypotheses of Theorem~\ref{thm:weak-asymptotic}. If, for some
\begin{equation}\label{eq:s-range}
 0\le s<\sigma_*,
\end{equation}
one has
\begin{equation}\label{eq:negative-FV}
 u_t\in L^1\bigl(0,\infty;H_{A_D}^{-s}(\Omega)\bigr),
\end{equation}
then
\begin{equation}\label{eq:negative-FV-conclusion}
 Y(t)\to0
 \qquad\text{strongly in }\Hcal.
\end{equation}
In particular, $u_t\in L^1(0,\infty;L^2(\Omega))$ is sufficient.
\end{corollary}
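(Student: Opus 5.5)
The plan is to verify the spectral finite-variation condition $\mathrm{(SV)}_\sigma$ for a suitable $\sigma\in(s,\sigma_*)$. Theorem~\ref{thm:HF-SV} then gives the equivalent assertions, and in particular strong convergence \eqref{eq:negative-FV-conclusion}. Since $s<\sigma_*$, we may fix $\sigma$ with $s<\sigma<\sigma_*$. When $s=0$, any $\sigma\in(0,\sigma_*)$ will do.

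The key estimate is a comparison of $\norm{P_Nu_t}_2$ with the negative norm. Writing $u_t=\sum_k a_k(t)e_k$, we have
\[
 \norm{P_Nu_t(t)}_2^2=\sum_{k\le N}\abs{a_k}^2
 \le\lambda_N^{s}\sum_{k\le N}\lambda_k^{-s}\abs{a_k}^2
 \le\lambda_{N+1}^{s}\norm{u_t(t)}_{-s,A_D}^2 ,
\]
because $\lambda_k\le\lambda_N\le\lambda_{N+1}$ for $k\le N$. This gives
\[
 \lambda_{N+1}^{-\sigma/2}\int_0^\infty\norm{P_Nu_t(t)}_2\,\dd t
 \le\lambda_{N+1}^{-(\sigma-s)/2}\int_0^\infty\norm{u_t(t)}_{-s,A_D}\,\dd t .
\]
By \eqref{eq:negative-FV} the integral on the right is finite, and since $\sigma>s$ and $\lambda_{N+1}\to\infty$, the right-hand side tends to zero. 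This is $\mathrm{(SV)}_\sigma$.

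One point needs care: the measurability and meaning of $t\mapsto\norm{u_t(t)}_{-s,A_D}$. Because $u_t\in C([0,\infty);L^2)$ and $L^2\hookrightarrow H^{-s}_{A_D}$ continuously, this map is continuous, so the hypothesis is meaningful. The special case $s=0$ is immediate, since $H^{0}_{A_D}=L^2(\Omega)$ isometrically, so $u_t\in L^1(0,\infty;L^2)$ is sufficient. The only genuinely nontrivial ingredient, the passage from (HF) to strong convergence, is already contained in Theorem~\ref{thm:HF-SV}. I therefore expect no real obstacle beyond choosing the strict gap $\sigma>s$ that produces the vanishing factor $\lambda_{N+1}^{-(\sigma-s)/2}$.
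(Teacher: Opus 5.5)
Your proposal is correct and follows the paper's proof essentially verbatim: choose $\sigma\in(s,\sigma_*)$, bound $\norm{P_Nu_t}_2$ by $\lambda_N^{s/2}\norm{u_t}_{-s,A_D}\le\lambda_{N+1}^{s/2}\norm{u_t}_{-s,A_D}$, deduce $\mathrm{(SV)}_\sigma$ from the vanishing factor $\lambda_{N+1}^{-(\sigma-s)/2}$, and invoke Theorem~\ref{thm:HF-SV}. Your explicit remarks on the step $\lambda_N\le\lambda_{N+1}$ and on the measurability of $t\mapsto\norm{u_t(t)}_{-s,A_D}$ merely make explicit what the paper leaves implicit.
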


\begin{proof}
Choose $\sigma$ with $s<\sigma<\sigma_*$. By \eqref{eq:negative-dirichlet-scale},
\begin{equation}\label{eq:PN-negative-bound}
 \norm{P_Ng}_2\le\lambda_N^{s/2}\norm{g}_{-s,A_D}.
\end{equation}
Hence
\begin{align*}
 \lambda_{N+1}^{-\sigma/2}\int_0^\infty\norm{P_Nu_t(t)}_2\,\dd t
 &\le\lambda_{N+1}^{-(\sigma-s)/2}
 \norm{u_t}_{L^1(0,\infty;H_{A_D}^{-s})}\longrightarrow0.
\end{align*}
Thus $\mathrm{(SV)}_\sigma$ holds, and Theorem~\ref{thm:HF-SV} applies.
\end{proof}

For $\delta>0$, define
\begin{align}\label{eq:fractional-phase-space}
 \Hcal_\delta={}&D(A_D^{(1+\delta)/2})\times D(A_D^{\delta/2})
 \times D(A_D^{(1+\delta)/2})\times D(A_D^{\delta/2})
 \times D(A_D^{(1+\delta)/2}).
\end{align}
Since $A_D$ has compact resolvent,
\begin{equation}\label{eq:fractional-compact-embedding}
 \Hcal_\delta\Subset\Hcal
 \qquad\text{for every }\delta>0.
\end{equation}

\begin{corollary}[Eventual fractional regularity]\label{cor:fractional-regularity}
Assume the hypotheses of Theorem~\ref{thm:weak-asymptotic}. If there exist $T_0\ge0$ and $\delta>0$ such that
\begin{equation}\label{eq:eventual-fractional-bound}
 \sup_{t\ge T_0}\norm{Y(t)}_{\Hcal_\delta}<\infty,
\end{equation}
then $Y(t)\to0$ strongly in $\Hcal$.
\end{corollary}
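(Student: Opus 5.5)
The plan is to show that the eventual bound \eqref{eq:eventual-fractional-bound} forces relative compactness of the whole orbit in $\Hcal$. Corollary~\ref{cor:strong-precompact}, or equivalently the implication (ii)$\Rightarrow$(iii) of Theorem~\ref{thm:HF-SV}, then gives strong convergence to zero. No flux computation is needed; the compactness comes directly from the embedding \eqref{eq:fractional-compact-embedding}.

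I would split the orbit as
\[
 \{Y(t):t\ge0\}=\{Y(t):0\le t\le T_0\}\cup\{Y(t):t\ge T_0\}.
\]
The first piece is the continuous image of the compact interval $[0,T_0]$ under $Y\in C([0,\infty);\Hcal)$, so it is compact in $\Hcal$. The second piece is bounded in $\Hcal_\delta$ by \eqref{eq:eventual-fractional-bound}. Since $\Hcal_\delta\Subset\Hcal$, bounded subsets of $\Hcal_\delta$ are relatively compact in $\Hcal$. The union of two relatively compact sets is relatively compact, so the orbit is relatively compact.

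To avoid relying on the abstract compact-embedding claim, I would verify the tail piece directly in the spirit of the proof of Theorem~\ref{thm:HF-SV}. For each component one has spectral tail bounds such as
\[
 \norm{\nabla Q_Nu}_2\le\lambda_{N+1}^{-\delta/2}\norm{A_D^{(1+\delta)/2}u}_2,
 \qquad
 \norm{Q_Nu_t}_2\le\lambda_{N+1}^{-\delta/2}\norm{A_D^{\delta/2}u_t}_2.
\]
Hence $\sup_{t\ge T_0}\norm{\mathbf Q_NY(t)}_{\Hcal}\to0$, while the low modes $\mathbf P_NY(t)$ range over a bounded subset of a finite-dimensional space. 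This gives total boundedness of the tail piece.

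The only mildly delicate point is the norm on $H_0^1$. The identity $\norm{\nabla g}_2=\norm{A_D^{1/2}g}_2$ for $g\in H_0^1(\Omega)$ is what makes the spectral tail estimate valid in the $\Hcal$ norm. Beyond this, the argument is routine, and Corollary~\ref{cor:strong-precompact} finishes the proof.
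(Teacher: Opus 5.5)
Your proposal is correct and matches the paper's proof. The paper also treats the initial segment as compact by continuity, makes the tail relatively compact via $\Hcal_\delta\Subset\Hcal$, and then applies Corollary~\ref{cor:strong-precompact}; your spectral tail estimate is a valid explicit unpacking of the compact embedding \eqref{eq:fractional-compact-embedding}, which the paper simply cites.
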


\begin{proof}
By \eqref{eq:fractional-compact-embedding}, the tail orbit $\{Y(t):t\ge T_0\}$ is relatively compact in $\Hcal$. The initial segment $\{Y(t):0\le t\le T_0\}$ is compact by continuity. Apply Corollary~\ref{cor:strong-precompact}.
\end{proof}

\begin{remark}[Logical scope of the criteria]\label{rem:scope-criteria}
Theorem~\ref{thm:weak-asymptotic} is unconditional within the stable set. Theorem~\ref{thm:HF-SV} identifies \textup{(HF)} with orbit precompactness and strong convergence. For the linearized flow the nonlinear flux $\Rcal_N$ vanishes identically, so \textup{(HF)} is automatic and Proposition~\ref{prop:linear-strong-stability} gives strong stability. The conditions $\mathrm{(SV)}_\sigma$, \eqref{eq:negative-FV}, and \textup{(SFV)} are sufficient trajectory hypotheses for the nonlinear flux property, whereas \eqref{eq:eventual-fractional-bound} gives compactness directly. The first-order nonlinear energy identity supplies none of these additional time-integrability or regularity bounds; the orbitwise formulation is consistent with Proposition~\ref{prop:spectral-obstruction} and Remark~\ref{rem:no-linear-compactification}.
\end{remark}

\subsection{Regular trajectories and a source-variation upgrade}\label{subsec:regular-upgrade}
Let $V=H_0^1(\Omega)$, $H=L^2(\Omega)$, and $A_D=-\Delta$ with
$D(A_D)=H^2(\Omega)\cap H_0^1(\Omega)$. Define the regular phase space
\begin{equation}\label{eq:regular-phase-space}
 \Hcal_1=D(A_D)\times V\times D(A_D)\times V\times D(A_D),
\end{equation}
which agrees with \eqref{eq:fractional-phase-space} for $\delta=1$. For
$Y=(u,u_t,w,w_t,z)\in\Hcal_1$, set
\begin{align}\label{eq:regular-energy}
 \Escr_1(Y)={}&\frac12\norm{A_D^{1/2}u_t}_2^2
 +\frac12\norm{A_D^{1/2}w_t}_2^2
 +\frac{\tau D}{2}\norm{A_Dz}_2^2\notag\\
 &+\frac12\norm{A_Du}_2^2+\frac12\norm{A_Dw}_2^2
 +\alpha\ip{A_D^{1/2}u}{A_D^{1/2}w}.
\end{align}
The condition $\abs{\alpha}<\lambda_1$ gives constants $c_1,c_2>0$ such that
\begin{equation}\label{eq:regular-energy-equivalence}
 c_1\norm{Y}_{\Hcal_1}^2\le\Escr_1(Y)\le c_2\norm{Y}_{\Hcal_1}^2.
\end{equation}
Indeed, $\norm{A_D^{1/2}\phi}_2^2\le\lambda_1^{-1}\norm{A_D\phi}_2^2$ for $\phi\in D(A_D)$.

\begin{theorem}[Global propagation of regularity]\label{thm:regular-propagation}
Assume the hypotheses of Theorem~\ref{thm:stable-global}, and suppose in addition that $Y_0\in\Hcal_1$. Then the global energy solution is the unique global regular solution and
\begin{equation}\label{eq:regularity-class}
 Y\in C([0,\infty);\Hcal_1),
 \qquad
 u_{tt},w_{tt}\in C([0,\infty);L^2(\Omega)),
 \qquad
 z_t\in C([0,\infty);V).
\end{equation}
For every $t\ge0$,
\begin{align}\label{eq:regular-high-identity}
 \Escr_1(t)+D\int_0^t\norm{A_Dz(s)}_2^2\,\dd s
 =\Escr_1(0)+\int_0^t
 \ip{A_D^{1/2}f(u(s))}{A_D^{1/2}u_t(s)}\,\dd s.
\end{align}
If $M=\sup_{t\ge0}\norm{u(t)}_{V}$, then there exists $C_M>0$ such that
\begin{equation}\label{eq:regular-finite-time-bound}
 \Escr_1(t)\le\Escr_1(0)e^{C_Mt},
 \qquad t\ge0,
\end{equation}
and, for every $T>0$,
\begin{equation}\label{eq:regular-finite-time-dissipation}
 D\int_0^T\norm{A_Dz(t)}_2^2\,\dd t
 \le \Escr_1(0)e^{C_MT}.
\end{equation}
\end{theorem}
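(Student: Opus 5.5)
The plan is to combine a local regular theory with an a priori high-order estimate, and then to obtain global existence from the global energy bound of Theorem~\ref{thm:stable-global}. The first step is local regular well-posedness. We view $\Hcal_1$ as (essentially) the domain of $\Acal$ intersected with the extra $H^2$ regularity of $w$ and $z$ separately. We then show that $\mathcal G$ maps bounded sets of $\Hcal_1$ into $\Hcal_1$ Lipschitz-continuously; concretely, $u\mapsto f(u)$ should be locally Lipschitz from $D(A_D)$ into $V$. For this I would use $\nabla f(u)=f'(u)\nabla u$, the growth bound \eqref{eq:F1-growth}, and $D(A_D)\hookrightarrow W^{1,2^*}$ or $L^\infty$ (as appropriate to the dimension). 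The composition estimate in the subcritical range is deferred to Appendix~\ref{app:regular-estimates}.

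An alternative route to local existence is a Galerkin scheme in the Dirichlet eigenbasis $\{e_k\}$, which commutes with $A_D$. Either way, one gets a maximal regular solution that coincides with the energy solution by the uniqueness statement of Theorem~\ref{thm:local-wp}. It satisfies the continuation alternative in $\Hcal_1$.

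The second step is the high-order identity \eqref{eq:regular-high-identity}. At the Galerkin level, or for data in $D(\Acal)$ with one more order of smoothness, apply $A_D^{1/2}$ to the first two equations of \eqref{eq:augmented-system} and test with $A_D^{1/2}u_t$ and $A_D^{1/2}w_t$. Use $w_t=z+\tau z_t$ exactly as in the derivation of \eqref{eq:generator-dissipative} to produce $\frac{\tau D}{2}\frac{\dd}{\dd t}\norm{A_Dz}_2^2+D\norm{A_Dz}_2^2$. The coupling terms then combine into $\frac{\dd}{\dd t}\alpha\ip{A_D^{1/2}u}{A_D^{1/2}w}$, and the only remaining source term is $\ip{A_D^{1/2}f(u)}{A_D^{1/2}u_t}=\ip{f'(u)\nabla u}{\nabla u_t}$. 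The identity is passed to the limit using continuous dependence in $\Hcal_1$.

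The third step is the Gronwall bound, which I expect to be the main obstacle. We need
\[
\norm{f'(u)\nabla u}_2\le C_M\norm{A_Du}_2,
\]
with a constant depending only on $M=\sup_t\norm{u}_V$ (finite by \eqref{eq:stable-uniform-bound}). This estimate must be linear in the high norm rather than superlinear. I would apply H\"older's inequality to $\norm{f'(u)\nabla u}_2\le C(\norm{\nabla u}_2+\norm{\abs{u}^p}_{a}\norm{\nabla u}_{b})$ with $\frac1a+\frac1b=\frac12$. Here $\nabla u\in L^b$ is controlled by interpolating between $\norm{\nabla u}_2\le M$ and $\norm{A_Du}_2$. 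We also have $\norm{u}_{pa}^p\le CM^p$, provided that $pa<2^*$, and this is exactly where $2(p+1)<2^*$ enters. Since the exponents are subcritical, the interpolation power on $\norm{A_Du}_2$ should be at most $1$. If it is exactly $1$, this gives $\frac{\dd}{\dd t}\Escr_1\le C_M\Escr_1$, using \eqref{eq:regular-energy-equivalence} and Young's inequality. Gronwall's inequality then yields \eqref{eq:regular-finite-time-bound}. Finally, \eqref{eq:regular-finite-time-dissipation} follows by integrating the identity and bounding the source integral by $C_M\int_0^T\Escr_1\le\Escr_1(0)e^{C_MT}$, after adjusting constants.

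The last step is global continuation and the remaining regularity. The bound \eqref{eq:regular-finite-time-bound} excludes blow-up of the $\Hcal_1$ norm in finite time, so the regular solution is global. The statements $u_{tt},w_{tt}\in C(L^2)$ and $z_t\in C(V)$ are then read off from the equations, for instance $\tau z_t=w_t-z\in C(V)$.
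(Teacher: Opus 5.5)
Your Step~3 is correct and is the paper's key estimate. The linear bound $\norm{f'(u)\nabla u}_2\le C_M\norm{A_Du}_2$ is Lemma~\ref{lem:regular-composition-general}; the paper uses $H^2\hookrightarrow W^{1,b}$ directly instead of interpolating. Your Gronwall derivation of \eqref{eq:regular-finite-time-bound}--\eqref{eq:regular-finite-time-dissipation} also matches.

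The gap is Step~1, on which your Steps~2 and~4 depend. Under \textup{(F1)} the source is only $C^1$, so $f:D(A_D)\to V$ is continuous but in general not locally Lipschitz. In $\nabla(f(u)-f(v))=(f'(u)-f'(v))\nabla u+f'(v)\nabla(u-v)$, the first term is only as good as the modulus of continuity of $f'$. For instance, take $f(s)=\abs{s}^qs$ with $0<q<\min\{1/2,p^*\}$; when $n\ge6$ this is every admissible power. Choose $u\in D(A_D)$ with a regular nodal point, and set $v=u+\varepsilon\phi$ with $\phi\equiv1$ on a ball around that point. The coarea formula gives $\norm{\nabla(f(v)-f(u))}_2\ge c\,\varepsilon^{q+1/2}$, while $\norm{v-u}_{H^2}=O(\varepsilon)$. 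Hence the semilinear semigroup theorem in $\Hcal_1$ does not apply. You therefore lose the local regular solution, the $\Hcal_1$-continuous dependence you use to pass the high-order identity to the limit, and the $\Hcal_1$ continuation alternative you use for globality.

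The Galerkin scheme you mention in passing is the route the paper takes, but as sketched it lacks two ingredients.

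First, the Gronwall constant for the approximations must depend on an $m$-uniform bound for $\sup_t\norm{u_m(t)}_V$. The bound \eqref{eq:stable-uniform-bound} for the limit solution does not provide this, and the Galerkin energy is not coercive by itself. The paper notes that $E(\mathbf P_mY_0)<d$ and $(P_mu_0,P_mw_0)\in\Wcal$ for large $m$. It then reruns the first-contact argument on the Galerkin ODE, which yields global Galerkin trajectories with a uniform bound $M$.

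Second, compactness then gives only $Y\in L^\infty(0,T;\Hcal_1)$ and convergence in lower norms. It does not give $Y\in C([0,T];\Hcal_1)$ or the exact identity \eqref{eq:regular-high-identity}. The paper closes this by observing that $f(u)\in L^\infty(0,T;V)$ and treating $G=f(u)$ as a given forcing in the linear problem of Proposition~\ref{prop:forced-linear-regular}. For that problem the Galerkin sequence is Cauchy in $C([0,T];\Hcal_1)$ and the exact identity passes to the limit. Linear uniqueness then identifies this regular solution with the energy solution.

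Your semigroup route could also be repaired by running the Duhamel contraction in the $C([0,T];\Hcal)$ metric on an $\Hcal_1$-bounded set. That needs only your linear composition bound and the Lipschitz property of $f:H_0^1\to L^2$. The identity would then again have to come from the forced linear problem.
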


The proof, including the dimension-dependent regular Nemytskii estimate and the high-order Galerkin passage, is given in Appendix~\ref{app:regular-estimates}.

\begin{remark}[Propagation is not positive-time smoothing]\label{rem:regular-no-smoothing}
Estimate \eqref{eq:regular-finite-time-bound} is uniform on every compact time interval but may grow exponentially with the endpoint. It neither gives an eventual $\Hcal_1$-bound for arbitrary energy data nor a uniform bound of the regular orbit. This is consistent with Remark~\ref{rem:no-linear-compactification}: the linear flow has no positive-time compactifying effect.
\end{remark}

\begin{theorem}[Uniform regular bound under finite nonlinear-force variation]\label{thm:regular-SFV}
Assume the hypotheses of Theorem~\ref{thm:regular-propagation} and condition \textup{(SFV)}. Then
\begin{equation}\label{eq:regular-SFV-conclusion}
 \sup_{t\ge0}\norm{Y(t)}_{\Hcal_1}<\infty,
 \qquad
 \int_0^\infty\norm{A_Dz(t)}_2^2\,\dd t<\infty.
\end{equation}
More precisely, there exists $C_M>0$ such that
\begin{equation}\label{eq:regular-SFV-explicit}
 \sup_{t\ge0}\sqrt{1+\Escr_1(t)}
 \le C_M\left(\sqrt{1+\Escr_1(0)}+\Vcal(u)\right).
\end{equation}
If also $0<\abs{\alpha}<\lambda_1$, then the strong convergence conclusion \eqref{eq:SFV-strong-convergence} holds; alternatively, it follows from \eqref{eq:regular-SFV-conclusion}, the compact embedding $\Hcal_1\Subset\Hcal$, and Theorem~\ref{thm:HF-SV}.
\end{theorem}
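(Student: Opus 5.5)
Starting from the regular identity \eqref{eq:regular-high-identity} of Theorem~\ref{thm:regular-propagation}, I will treat the source term as a time derivative plus a controllable remainder. For a regular trajectory $g=f(u)\in C([0,\infty);V)$, and by the chain rule of Appendix~\ref{app:regular-estimates}, $g_t=f'(u)u_t$. Since $u_t\in V$, I write
\[
 \ip{A_D^{1/2}f(u)}{A_D^{1/2}u_t}=\ip{g}{A_Du_t}
 =\frac{\dd}{\dd t}\ip{g}{A_Du}-\ip{g_t}{A_Du},
\]
using $A_Du\in C^1([0,\infty);V')$ together with a density or Galerkin justification. The integrated version should then read
\[
 \Escr_1(t)-\ip{g(t)}{A_Du(t)}+D\int_0^t\norm{A_Dz}_2^2\,\dd s
 =\Escr_1(0)-\ip{g(0)}{A_Du_0}-\int_0^t\ip{g_t}{A_Du}\,\dd s .
\]
This is the corrected regular energy mentioned in the introduction, obtained by subtracting $(f(u),A_Du)$. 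I will justify the identity first for Galerkin or classical approximants. I then pass to the limit using the regularity class \eqref{eq:regularity-class} and the identification $g_t=f'(u)u_t\in L^1_{\rm loc}(L^2)$ supplied by the appendix.

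Next I bound the correction and the remainder using only the energy-level bound. By Lemma~\ref{lem:source-estimates} and Theorem~\ref{thm:stable-global}, $\norm{g(t)}_2\le C_M$ uniformly in $t$. Together with \eqref{eq:regular-energy-equivalence}, this gives $\abs{\ip{g}{A_Du}}\le C_M\norm{A_Du}_2\le \tfrac14\Escr_1+C_M'$. The remainder satisfies $\abs{\ip{g_t}{A_Du}}\le C\norm{g_t}_2\sqrt{\Escr_1}$. Setting $\Psi(t)=\sup_{[0,t]}\Escr_1$ and absorbing the correction, I obtain
\[
 \Escr_1(t)+D\int_0^t\norm{A_Dz}_2^2\le C_M\bigl(1+\Escr_1(0)\bigr)+C\sqrt{1+\Psi(t)}\int_0^t\norm{g_t}_2\,\dd s .
\]
Taking the supremum gives $1+\Psi\le C_M(1+\Escr_1(0))+C\sqrt{1+\Psi}\,\Vcal(u)$. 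This is a quadratic inequality in $X=\sqrt{1+\Psi(t)}$, and solving it yields $X\le C_M(\sqrt{1+\Escr_1(0)}+\Vcal(u))$ with a bound uniform in $t$, which is \eqref{eq:regular-SFV-explicit}. Since $\Psi(t)<\infty$ for each finite $t$ by \eqref{eq:regular-finite-time-bound}, no circularity arises. Feeding this bound back into the inequality bounds $D\int_0^\infty\norm{A_Dz}_2^2$, and \eqref{eq:regular-energy-equivalence} converts the result into \eqref{eq:regular-SFV-conclusion}.

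The strong convergence statement for $0<\abs{\alpha}<\lambda_1$ follows at once from Corollary~\ref{cor:source-variation}. Alternatively, the uniform $\Hcal_1$-bound combined with \eqref{eq:fractional-compact-embedding} makes the orbit precompact, and Corollary~\ref{cor:fractional-regularity} or Theorem~\ref{thm:HF-SV} applies. The main obstacle I expect is rigorously justifying the integration by parts in time at regular rather than classical regularity. The pairing $\ip{g}{A_Du_t}$ needs $g\in V$, and $\frac{\dd}{\dd t}\ip{g}{A_Du}$ needs $g$ absolutely continuous into $L^2$ while $A_Du$ is only $C([0,\infty);L^2)$ with derivative $A_Du_t\in C([0,\infty);V')$. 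I would first try a mollification in time, or a spectral truncation $P_N$ applied to $u$, passing to the limit with the appendix's bound $f(u)\in C(V)$.
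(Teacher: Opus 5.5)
Your proposal is correct and follows essentially the paper's route. You use the same corrected energy $\Escr_1-\ip{f(u)}{A_Du}$, justified by the time-mollified mixed product rule of Lemma~\ref{lem:regular-mixed-product}, the same $\frac14\Escr_1+K_M$ absorption of the correction, and the same bound $\abs{\ip{g_t}{A_Du}}\le C\norm{g_t}_2\sqrt{\Escr_1}$. The only difference is cosmetic: you close the estimate with a supremum and a quadratic inequality in $\sqrt{1+\sup_{[0,t]}\Escr_1}$, whereas the paper integrates the differential inequality for $\sqrt{\Lcal+\varepsilon}$; both yield \eqref{eq:regular-SFV-explicit} and the dissipation bound.
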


\begin{proof}
Write $g(t)=f(u(t))$. By Theorem~\ref{thm:regular-propagation} and Lemma~\ref{lem:regular-composition-general},
\[
 g\in C([0,\infty);V)\cap C^1([0,\infty);H),
 \qquad g_t=f'(u)u_t.
\]
The stable energy bound and Lemma~\ref{lem:source-estimates} give
\begin{equation}\label{eq:regular-source-L2-bound}
 \sup_{t\ge0}\norm{g(t)}_2\le C_M.
\end{equation}
Consequently, by Young's inequality and \eqref{eq:regular-energy-equivalence}, one may choose $K_M>0$ such that
\begin{equation}\label{eq:regular-correction-bound}
 \abs{\ip{g(t)}{A_Du(t)}}
 \le\frac14\Escr_1(t)+K_M,
 \qquad t\ge0.
\end{equation}
Define the corrected regular energy
\begin{equation}\label{eq:corrected-regular-energy}
 \Lcal(t)=\Escr_1(t)-\ip{g(t)}{A_Du(t)}+K_M+1.
\end{equation}
Then there exist $c_M,C_M>0$ such that
\begin{equation}\label{eq:corrected-regular-equivalence}
 c_M(1+\Escr_1(t))\le\Lcal(t)\le C_M(1+\Escr_1(t)).
\end{equation}
The mixed product rule of Lemma~\ref{lem:regular-mixed-product} gives
\[
 \frac{\dd}{\dd t}\ip{g}{A_Du}
 =\ip{g_t}{A_Du}+\ip{A_D^{1/2}g}{A_D^{1/2}u_t}.
\]
Subtracting this identity from the differential form of \eqref{eq:regular-high-identity} yields
\begin{equation}\label{eq:corrected-regular-identity}
 \Lcal'(t)+D\norm{A_Dz(t)}_2^2
 =-\ip{g_t(t)}{A_Du(t)}.
\end{equation}
By \eqref{eq:corrected-regular-equivalence} and \eqref{eq:regular-energy-equivalence},
\begin{equation}\label{eq:corrected-sqrt-differential}
 \Lcal'(t)+D\norm{A_Dz(t)}_2^2
 \le C_M\norm{g_t(t)}_2\sqrt{\Lcal(t)}.
\end{equation}
For $\varepsilon>0$, divide by $2\sqrt{\Lcal+\varepsilon}$ and discard the nonnegative dissipation. After integration and letting $\varepsilon\downarrow0$,
\begin{equation}\label{eq:corrected-sqrt-bound}
 \sqrt{\Lcal(t)}
 \le\sqrt{\Lcal(0)}+C_M\int_0^t\norm{g_t(s)}_2\,\dd s.
\end{equation}
Condition \textup{(SFV)} and \eqref{eq:corrected-regular-equivalence} prove \eqref{eq:regular-SFV-explicit} and the first assertion in \eqref{eq:regular-SFV-conclusion}. Integrating \eqref{eq:corrected-regular-identity}, using $\Lcal(t)\ge0$, and then applying \eqref{eq:corrected-sqrt-bound}, we obtain
\begin{align*}
 D\int_0^T\norm{A_Dz(t)}_2^2\,\dd t
 &\le\Lcal(0)+\int_0^T\norm{g_t(t)}_2\norm{A_Du(t)}_2\,\dd t\\
 &\le\Lcal(0)+C_M\Vcal(u)
 \left(\sqrt{\Lcal(0)}+C_M\Vcal(u)\right).
\end{align*}
Letting $T\to\infty$ proves the second assertion in \eqref{eq:regular-SFV-conclusion}.
\end{proof}

\section{Finite-time blow-up below the well depth}\label{sec:below-blowup}
We first show that the unstable side of the potential well is invariant.

\begin{proposition}[Invariance of the unstable set]\label{prop:unstable-invariance}
Assume \eqref{eq:parameter-assumptions} and \textup{(F1)}--\textup{(F4)}. Let $Y$ be the maximal solution from Theorem~\ref{thm:local-wp}. If
\begin{equation}\label{eq:unstable-initial}
 E(0)<d,
 \qquad I(u_0,w_0)<0,
\end{equation}
then
\begin{equation}\label{eq:unstable-sign-invariance}
 I(u(t),w(t))<0,
 \qquad 0\le t<T_{\max},
\end{equation}
and
\begin{equation}\label{eq:unstable-Q-kappa}
 Q_\alpha(u(t),w(t))\ge\kappa_0,
 \qquad 0\le t<T_{\max}.
\end{equation}
\end{proposition}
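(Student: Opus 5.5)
We argue by continuity of the Nehari functional along the trajectory, mirroring the stable-side argument in Theorem~\ref{thm:stable-global}. By Theorem~\ref{thm:local-wp}, the map $t\mapsto(u(t),w(t))$ is continuous into $H_0^1(\Omega)^2$. Since $I$ and $J$ are continuous on $H_0^1(\Omega)^2$, both $t\mapsto I(u(t),w(t))$ and $t\mapsto J(u(t),w(t))$ are continuous on $[0,T_{\max})$. The exact energy identity \eqref{eq:exact-energy}, together with nonnegativity of the kinetic and $z$-terms in \eqref{eq:phase-energy}, gives
\[
 J(u(t),w(t))\le E(t)\le E(0)<d,
 \qquad 0\le t<T_{\max}.
\]

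Suppose \eqref{eq:unstable-sign-invariance} fails. Then the set $\{t\in[0,T_{\max}):I(u(t),w(t))\ge0\}$ is nonempty and closed in $[0,T_{\max})$. Let $t_*$ be its infimum. Since $I(u_0,w_0)<0$, continuity gives $t_*>0$, $I(u(t_*),w(t_*))=0$, and $I(u(t),w(t))<0$ on $[0,t_*)$.

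There are two cases. First, suppose $(u(t_*),w(t_*))\ne(0,0)$. Then the pair lies in $\Ncal$, so $J(u(t_*),w(t_*))\ge d$, which contradicts the energy bound above.

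Second, suppose $(u(t_*),w(t_*))=(0,0)$. This is the only delicate point, and it is excluded by the lower bound \eqref{eq:unstable-Q-lower} of Lemma~\ref{lem:Nehari-barrier}. For every $t<t_*$ we have $I(u(t),w(t))<0$, so $Q_\alpha(u(t),w(t))\ge\kappa_0$. Letting $t\uparrow t_*$ and using continuity of $Q_\alpha$ gives $Q_\alpha(u(t_*),w(t_*))\ge\kappa_0>0$. This is incompatible with the pair being zero. One could instead invoke Lemma~\ref{lem:local-Nehari-positive}, since near the origin $I\ge\frac14Q_\alpha\ge0$, but the $\kappa_0$ bound is the more direct route.

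Hence \eqref{eq:unstable-sign-invariance} holds. Applying Lemma~\ref{lem:Nehari-barrier} at each time $t$ then yields \eqref{eq:unstable-Q-kappa}. The only hypotheses used are (F1)--(F4), through $d>0$ (Proposition~\ref{prop:positive-depth}) and the Nehari scaling that gives the barrier.
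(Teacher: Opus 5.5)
Your proof is correct and is the paper's first-contact argument. The only difference is the degenerate case $(u(t_*),w(t_*))=(0,0)$: you exclude it by passing the bound $Q_\alpha\ge\kappa_0$ from Lemma~\ref{lem:Nehari-barrier} to the limit $t\uparrow t_*$, while the paper uses Lemma~\ref{lem:local-Nehari-positive} to get $I\ge0$ just before $t_*$; both routes are valid and equally short.
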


\begin{proof}
The maps $t\mapsto I(u(t),w(t))$ and $t\mapsto J(u(t),w(t))$ are continuous. Suppose there is a first time $t_*>0$ at which $I(u(t_*),w(t_*))=0$. If the pair is nonzero, it belongs to $\Ncal$, while
\[
 J(u(t_*),w(t_*))\le E(t_*)\le E(0)<d,
\]
contradicting the definition of $d$. If the pair is zero, continuity and Lemma~\ref{lem:local-Nehari-positive} imply nonnegativity of $I(u(t),w(t))$ for all $t<t_*$ sufficiently close to $t_*$. This contradicts negativity before first contact. Hence \eqref{eq:unstable-sign-invariance} holds. The lower bound \eqref{eq:unstable-Q-kappa} follows from Lemma~\ref{lem:Nehari-barrier}.
\end{proof}

The concavity functional must reflect the one-sided MGT dissipation. In particular, no history term involving $\nabla u$ is introduced.

\begin{theorem}[Below-depth blow-up]\label{thm:below-depth-blowup}
Assume \eqref{eq:parameter-assumptions} and \textup{(F1)}--\textup{(F4)}. If \eqref{eq:unstable-initial} holds, then
\begin{equation}\label{eq:below-blowup-time}
 T_{\max}<\infty.
\end{equation}
Moreover,
\begin{equation}\label{eq:below-gradient-blowup}
 \limsup_{t\uparrow T_{\max}}
 \bigl(\norm{\nabla u(t)}_2^2+\norm{\nabla w(t)}_2^2\bigr)=\infty.
\end{equation}
No sign condition on $(u_1,w_1)$ is required.
\end{theorem}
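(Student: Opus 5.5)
The plan is a Levine concavity argument. The functional will be built only from the $L^2$ masses of $(u,w)$ and an MGT-history correction; no $\nabla u$-history appears, since the wave equation has no $-\Delta u_t$. Fix $t_0,\beta\ge0$, to be chosen, and recall $v=w-\tau z$, $v_t=z$ from Remark~\ref{rem:recover-v}. Set
\[
 M(t)=\norm{u(t)}_2^2+\norm{w(t)}_2^2
 +D\int_0^t\Bigl(\norm{\nabla v(s)}_2^2+2\tau\int_0^s\norm{\nabla z(\xi)}_2^2\,\dd\xi\Bigr)\dd s
 +\beta(t+t_0)^2 .
\]
The correction is chosen so that its second derivative equals $2D\ip{\nabla v}{\nabla z}+2\tau D\norm{\nabla z}_2^2=2D\ip{\nabla w}{\nabla z}$, by \eqref{eq:v-gradient-identity}. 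This exactly cancels the term $-2D\ip{\nabla w}{\nabla z}$ produced by testing the $w$-equation of \eqref{eq:augmented-system} with $w$. For energy solutions these computations will be done on the classical approximants of \eqref{eq:classical-approximation} and passed to the limit, as in Theorem~\ref{thm:local-wp}. One then gets
\[
 M''(t)=2\norm{u_t}_2^2+2\norm{w_t}_2^2-2I(u,w)+2\beta .
\]

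Next, $-2I$ will be bounded from below. Lemma~\ref{lem:structural-identity} gives $-I=\theta\bigl(\tfrac{\theta-2}{2\theta}Q_\alpha+\int H_\theta(u)-J\bigr)$. I substitute $J=E(t)-\tfrac12\norm{u_t}_2^2-\tfrac12\norm{w_t}_2^2-\tfrac{\tau D}{2}\norm{\nabla z}_2^2$ and $E(t)=E(0)-D\int_0^t\norm{\nabla z}_2^2$ from \eqref{eq:exact-energy}. Proposition~\ref{prop:unstable-invariance} keeps $I<0$ along the flow, so the Nehari barrier \eqref{eq:Nehari-barrier} of Lemma~\ref{lem:Nehari-barrier} applies at every time. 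This yields
\[
 M''\ge(\theta+2)\bigl(\norm{u_t}_2^2+\norm{w_t}_2^2\bigr)
 +\theta\tau D\norm{\nabla z}_2^2
 +2\theta D\int_0^t\norm{\nabla z}_2^2
 +2\theta\bigl(d-E(0)\bigr)+2\beta .
\]
The constant $2\theta(d-E(0))>0$ plays the role of a positive forcing, so $M'$ eventually becomes positive and large. This is why no sign condition on $(u_1,w_1)$ is needed: for large $t_0$ the term $2\beta(t+t_0)$ dominates $M'(0)$, and $\beta$ is taken smaller than $\theta(d-E(0))$ so that the positive gap is retained.

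The main obstacle is the Cauchy--Schwarz step $M M''\ge\gamma(M')^2$ with some $\gamma>1$. The history part of $M'$ is $D\norm{\nabla v(0)}_2^2+2D\int_0^t\ip{\nabla w}{\nabla z}$. The constant $D\norm{\nabla v(0)}_2^2$ will be absorbed by enlarging $M$ with a term $c\,(T_0-t)$, as in Levine's method. The integral will be estimated by $\bigl(\int_0^t\norm{\nabla w}_2^2\bigr)^{1/2}\bigl(\int_0^t\norm{\nabla z}_2^2\bigr)^{1/2}$. Here $M$ controls $\int\norm{\nabla v}_2^2$ rather than $\int\norm{\nabla w}_2^2$, so I would write $\nabla w=\nabla v+\tau\nabla z$ and absorb the extra $\tau\int\norm{\nabla z}_2^2$ pieces using the double-history term and the slack $\theta>2$. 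The aim is to land in the form $(M')^2\le 4\bigl(\norm{u}_2^2+\norm{w}_2^2+\text{history}\bigr)\bigl(\norm{u_t}_2^2+\norm{w_t}_2^2+D\int\norm{\nabla z}_2^2+\beta\bigr)$, with $\gamma=(\theta+2)/4>1$. This step is where the constants must be checked carefully.

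Granting this, $M^{-(\gamma-1)}$ is positive and concave, with negative derivative at $t=0$ for $t_0$ large. Hence it vanishes at a finite time, so $M$, and therefore $\norm{u}_2^2+\norm{w}_2^2$ or the history integrals, blows up in finite time, which forces $T_{\max}<\infty$. For \eqref{eq:below-gradient-blowup}, argue by contradiction. If $\norm{\nabla u}_2+\norm{\nabla w}_2$ stayed bounded near $T_{\max}$, then $J$ would be bounded there. Since $E(t)\le E(0)$, the kinetic terms and $\norm{\nabla z}_2$ would also be bounded, so $\norm{Y(t)}_\Hcal$ would stay bounded, contradicting \eqref{eq:continuation-alternative}.
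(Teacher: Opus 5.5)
\textbf{There is a genuine gap, exactly at the step you flagged.} The double-history term you added to $M$ makes the Cauchy--Schwarz step fail. Your formula $M''=2K-2I(u,w)+2\beta$, with $K=\norm{u_t}_2^2+\norm{w_t}_2^2$, is correct. The trouble is that the same term also enters $M'$. Write $G(t)=\int_0^t\norm{\nabla z(s)}_2^2\,\dd s$ and $Z(t)=2\tau D\int_0^tG(s)\,\dd s$. Then
\[
 M'(t)=2\ip{u}{u_t}+2\ip{w}{w_t}+D\norm{\nabla v_0}_2^2
 +2D\int_0^t\ip{\nabla v}{\nabla z}\,\dd s+2\tau D\,G(t)+2\beta(t+t_0).
\]
The term $Z'=2\tau DG(t)$ has no admissible Cauchy--Schwarz partner.
\begin{enumerate}
\item Pairing it with the $DG$ in the kinetic factor would need a term of size $\tau^2DG(t)$ in $M$. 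But $M$ contains only $Z$, the integral of $G$.
\item Pairing it with $Z$ itself produces $(Z')^2/Z=2\tau D\,G(t)^2/\int_0^tG$. The only slack in your lower bound for $M''$ beyond $(\theta+2)(K+DG+\beta)$ is $\theta\tau D\norm{\nabla z(t)}_2^2+(\theta-2)DG(t)+2\theta(d-E(0))-\theta\beta$.
\end{enumerate}
This slack does not dominate $\frac{\theta+2}{4}(Z')^2/Z$. Near $t=0$ the latter tends to $(\theta+2)\tau D\norm{\nabla z_0}_2^2$, which exceeds $\theta\tau D\norm{\nabla z_0}_2^2$ plus the constant gap when $\norm{\nabla z_0}_2$ is large. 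After a short burst of dissipation at $t_1$ followed by small $\norm{\nabla z}_2$, the ratio $G^2/\int_0^tG\approx G(t)/(t-t_1)$ is large compared with $G(t)$ and $\norm{\nabla z(t)}_2^2$. Nothing bounds $G$ a priori, since $DG(t)=E(0)-E(t)$ and the energy is not bounded below on the unstable side. So $MM''\ge\gamma(M')^2$ cannot be closed with this $M$, and the slack $\theta>2$ does not rescue it.

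\textbf{The fix, which is the paper's route, is not to cancel $-2D\ip{\nabla w}{\nabla z}$ completely.} Take
$\Psi=\norm{u}_2^2+\norm{w}_2^2+D\int_0^t\norm{\nabla v}_2^2\,\dd s+D(T-t)\norm{\nabla v_0}_2^2+\beta(t+t_0)^2$.
\begin{enumerate}
\item Then $\Psi''$ contains $-2\tau D\norm{\nabla z}_2^2$. Rewriting $\int_\Omega uf(u)\,\dd x$ through the energy and $E(t)=E(0)-DG(t)$ produces $+\theta\tau D\norm{\nabla z}_2^2$, leaving $(\theta-2)\tau D\norm{\nabla z}_2^2\ge0$.
\item The history part of $\Psi'$ is now only $2D\int_0^t\ip{\nabla v}{\nabla z}\,\dd s$. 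This pairs with the term $D\int_0^t\norm{\nabla v}_2^2$ of $\Psi$ and with $DG$.
\item This gives $(\Psi')^2\le4\Psi X_2$ with $X_2=K+DG+\beta$, and $\Psi''-(\theta+2)X_2\ge2\theta(d-E(0))-\theta\beta$.
\end{enumerate}
Note that this requires $\beta<2(d-E(0))$, not $\beta<\theta(d-E(0))$ as you wrote. The rest of your plan agrees with the paper: invariance of $I<0$, the Nehari barrier, the terminal term with $t_0$ chosen first and then $T$, and the continuation argument for gradient blow-up.
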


\begin{proof}
Assume, to the contrary, that $T_{\max}=\infty$. Choose
\begin{equation}\label{eq:beta-choice}
 0<\beta<2\bigl(d-E(0)\bigr).
\end{equation}
Fix a terminal time $T>0$ and a parameter $t_0>0$, to be selected later. With $v=w-\tau z$, define on $[0,T]$
\begin{align}\label{eq:below-Psi}
 \Psi(t)={}&\norm{u(t)}_2^2+\norm{w(t)}_2^2
 +D\int_0^t\norm{\nabla v(s)}_2^2\,\dd s\notag\\
 &+D(T-t)\norm{\nabla v_0}_2^2+\beta(t+t_0)^2.
\end{align}
Note that $\Psi$ is strictly positive on $[0,T]$. By Remark~\ref{rem:recover-v},
\begin{equation}\label{eq:below-Psi-prime}
 \Psi'(t)=2\ip{u}{u_t}+2\ip{w}{w_t}
 +2D\int_0^t\ip{\nabla v(s)}{\nabla z(s)}\,\dd s+2\beta(t+t_0).
\end{equation}
We justify the second derivative at the energy level by approximation with data in $D(\Acal)$. The corresponding classical solutions converge in $C([0,T];\Hcal)$ by \eqref{eq:classical-approximation}; their functionals and first derivatives converge uniformly, while the right-hand sides of the classical second-derivative identities converge in $L^1(0,T)$. For the nonlinear term this follows from the continuity of $f:H_0^1(\Omega)\to L^2(\Omega)$ and of the $L^2$-pairing. Consequently,
\[
 \Psi\in W^{2,1}(0,T),
\]
and, for almost every $t\in(0,T)$,
\begin{equation}\label{eq:below-Psi-second-raw}
 \Psi''(t)=2\norm{u_t}_2^2+2\norm{w_t}_2^2-2Q_\alpha(u,w)
 +2\int_\Omega uf(u)\,\dd x-2\tau D\norm{\nabla z}_2^2+2\beta.
\end{equation}
The history cancellation is
\[
 -2D\ip{\nabla w}{\nabla z}+2D\ip{\nabla v}{\nabla z}
 =-2\tau D\norm{\nabla z}_2^2.
\]
Set $K(t)=\norm{u_t(t)}_2^2+\norm{w_t(t)}_2^2$. Since $uf(u)=\theta F(u)+\theta H_\theta(u)$ and
\[
 \int_\Omega F(u)\,\dd x
 =\frac12K(t)+\frac{\tau D}{2}\norm{\nabla z(t)}_2^2
 +\frac12Q_\alpha(u,w)-E(t),
\]
formula \eqref{eq:below-Psi-second-raw} becomes
\begin{align}\label{eq:below-Psi-second}
 \Psi''(t)={}&(\theta+2)K(t)+(\theta-2)\tau D\norm{\nabla z(t)}_2^2
 +(\theta-2)Q_\alpha(u,w)\notag\\
 &+2\theta\int_\Omega H_\theta(u)\,\dd x-2\theta E(t)+2\beta.
\end{align}
Using the exact energy identity,
\begin{align}\label{eq:below-Psi-second-energy}
 \Psi''(t)={}&(\theta+2)K(t)+(\theta-2)\tau D\norm{\nabla z(t)}_2^2
 +2\theta D\int_0^t\norm{\nabla z(s)}_2^2\,\dd s\notag\\
 &+(\theta-2)Q_\alpha(u,w)+2\theta\int_\Omega H_\theta(u)\,\dd x
 -2\theta E(0)+2\beta.
\end{align}
Define
\begin{align}
 X_1(t)&=\norm{u(t)}_2^2+\norm{w(t)}_2^2
 +D\int_0^t\norm{\nabla v(s)}_2^2\,\dd s+\beta(t+t_0)^2,
 \label{eq:below-X1}\\
 X_2(t)&=K(t)+D\int_0^t\norm{\nabla z(s)}_2^2\,\dd s+\beta.
 \label{eq:below-X2}
\end{align}
Cauchy--Schwarz applied to \eqref{eq:below-Psi-prime} gives
\begin{equation}\label{eq:below-CS}
 \bigl(\Psi'(t)\bigr)^2\le4X_1(t)X_2(t)\le4\Psi(t)X_2(t).
\end{equation}
Let
\begin{equation}\label{eq:eta-concavity}
 \eta=\frac{\theta-2}{4}>0,
 \qquad 4(1+\eta)=\theta+2.
\end{equation}
Subtracting $(\theta+2)X_2(t)$ from \eqref{eq:below-Psi-second-energy} and discarding nonnegative $z$-terms yields
\begin{align}\label{eq:below-core-lower}
 \Psi''(t)-4(1+\eta)X_2(t)
 \ge{}&(\theta-2)Q_\alpha(u,w)+2\theta\int_\Omega H_\theta(u)\,\dd x\notag\\
 &-2\theta E(0)-\theta\beta.
\end{align}
By Proposition~\ref{prop:unstable-invariance}, $I(u(t),w(t))<0$. Lemma~\ref{lem:Nehari-barrier} and \eqref{eq:beta-choice} therefore give
\begin{equation}\label{eq:below-positive-core}
 \Psi''(t)-4(1+\eta)X_2(t)
 \ge2\theta\bigl(d-E(0)\bigr)-\theta\beta>0.
\end{equation}
Combining \eqref{eq:below-CS} and \eqref{eq:below-positive-core},
\begin{equation}\label{eq:below-concavity-ineq}
 \Psi(t)\Psi''(t)-(1+\eta)\bigl(\Psi'(t)\bigr)^2\ge0
\end{equation}
for almost every $t\in(0,T)$. Since $\Psi$ is strictly positive and belongs to $W^{2,1}(0,T)$, the Sobolev chain rule gives $S=\Psi^{-\eta}\in W^{2,1}(0,T)$ and $S''\le0$ almost everywhere. Hence $S$ is concave and
\begin{equation}\label{eq:below-secant}
 S(T)\le S(0)+S'(0)T
 =\Psi(0)^{-\eta-1}\bigl(\Psi(0)-\eta\Psi'(0)T\bigr).
\end{equation}
Set
\[
 A=D\norm{\nabla v_0}_2^2,
 \qquad B_0=\norm{u_0}_2^2+\norm{w_0}_2^2+\beta t_0^2.
\]
Then
\[
 \Psi(0)=B_0+AT,
 \qquad
 \Psi'(0)=2\ip{u_0}{u_1}+2\ip{w_0}{w_1}+2\beta t_0,
\]
and
\begin{equation}\label{eq:below-terminal-choice}
 \Psi(0)-\eta\Psi'(0)T
 =B_0+T\bigl(A-\eta\Psi'(0)\bigr).
\end{equation}
First choose $t_0$ so large that $A-\eta\Psi'(0)<0$, and then choose $T$ large enough to make the right-hand side of \eqref{eq:below-terminal-choice} negative. Formula \eqref{eq:below-secant} would imply $S(T)<0$, a contradiction. Hence $T_{\max}<\infty$.

To prove \eqref{eq:below-gradient-blowup}, suppose the two spatial gradients remain bounded on $[0,T_{\max})$. The subcritical estimate \eqref{eq:F-eps} and Sobolev embedding bound $\int_\Omega F(u(t))\,\dd x$ from above. Since $E(t)\le E(0)$, the energy representation and coercivity of $Q_\alpha$ then bound every component of $Y(t)$ in $\Hcal$, contradicting \eqref{eq:continuation-alternative}.
\end{proof}

The critical level can be classified without a new concavity estimate. The argument combines strict energy loss away from the zero-dissipation set with the Nehari transversality in Lemma~\ref{lem:Nehari-transversality}; compare the critical-depth framework in \cite{XuCritical}.

\begin{theorem}[Dynamics at the critical well depth]\label{thm:critical-depth}
Assume \eqref{eq:parameter-assumptions}, \textup{(F1)}--\textup{(F4)}, and $\alpha\ne0$. Let $Y_0\in\Hcal$ satisfy
\begin{equation}\label{eq:critical-energy}
 E(0)=d.
\end{equation}
Then the following trichotomy holds.
\begin{enumerate}[label=\textup{(\roman*)},leftmargin=2.2em]
\item If $I(u_0,w_0)<0$, then $T_{\max}<\infty$ and \eqref{eq:below-gradient-blowup} holds.
\item If $I(u_0,w_0)>0$, or if $(u_0,w_0)=(0,0)$, then there exists $t_0>0$ such that
\begin{equation}\label{eq:critical-entry-stable}
 E(t_0)<d,
 \qquad (u(t_0),w(t_0))\in\Wcal.
\end{equation}
Consequently, the solution is global and satisfies the weak convergence conclusions of Theorem~\ref{thm:weak-asymptotic}.
\item If $I(u_0,w_0)=0$ and $(u_0,w_0)\ne(0,0)$, then
\begin{equation}\label{eq:critical-ground-state-data}
 J(u_0,w_0)=d,
 \qquad u_1=w_1=z_0=0,
 \qquad J'(u_0,w_0)=0.
\end{equation}
Thus $(u_0,w_0)$ is a Nehari ground state and the corresponding solution is stationary.
\end{enumerate}
\end{theorem}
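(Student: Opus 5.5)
The plan is to isolate a \emph{rigidity alternative} at the level $d$, and then sort the three cases by the sign of $I$ just after time zero. Since $E$ is nonincreasing by \eqref{eq:exact-energy}, only two things can happen: either $E(t)<d$ for every $t\in(0,T_{\max})$, or $E\equiv d$ on some interval $[0,\delta]$ with $\delta>0$.

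In the second alternative, \eqref{eq:exact-energy} forces $\nabla z\equiv0$, hence $z\equiv0$ on $[0,\delta]$. I then repeat the argument from the proof of Theorem~\ref{thm:weak-asymptotic}, now applied to the solution itself rather than to a limit.
\begin{itemize}
\item The relation $\tau z_t+z=w_t$, read in distributions, gives $w_t\equiv0$.
\item The second equation of \eqref{eq:augmented-system} then reduces to $A_Dw+\alpha u=0$ with $w$ independent of time.
\item Since $\alpha\ne0$, $u$ is also time independent, so $u_t\equiv0$.
\item The first equation then gives $A_Du+\alpha w=f(u)$, that is, $J'(u_0,w_0)=0$.
\end{itemize}
Thus $Y(t)\equiv Y_0=(u_0,0,w_0,0,0)$ is an equilibrium of \eqref{eq:augmented-system}. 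By uniqueness in Theorem~\ref{thm:local-wp}, it is the global stationary solution. Moreover $I(u_0,w_0)=\dual{J'(u_0,w_0)}{(u_0,w_0)}=0$ by \eqref{eq:J-radial-derivative}. The only delicate point is justifying these manipulations at the energy level. I expect this to be the main (though mild) obstacle, and I would handle it exactly as in the weak-asymptotic proof: pass to distributional identities using \eqref{eq:time-regularity}.

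Case (i). Suppose $I(u_0,w_0)<0$. The rigidity branch is excluded, because it forces $I(u_0,w_0)=0$. Hence $E(t)<d$ for all $t>0$. By continuity of $t\mapsto I(u(t),w(t))$, there is a small $t_1>0$ with $I(u(t_1),w(t_1))<0$ and $E(t_1)<d$. Applying Theorem~\ref{thm:below-depth-blowup} to the solution restarted at $t_1$ (autonomy and uniqueness) gives $T_{\max}<\infty$ and \eqref{eq:below-gradient-blowup}.

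Case (ii). The rigidity branch is excluded here as well. If $I(u_0,w_0)>0$, rigidity would contradict $I(u_0,w_0)=0$. If $(u_0,w_0)=(0,0)$, a stationary state would have $E=J(0,0)=0<d$. Hence $E(t)<d$ for $t>0$. If $I(u_0,w_0)>0$, continuity gives $I(u(t_0),w(t_0))>0$ for small $t_0$. If $(u_0,w_0)=(0,0)$, then $Q_\alpha(u(t_0),w(t_0))\le\rho_0$ for small $t_0$, so Lemma~\ref{lem:local-Nehari-positive} gives $(u(t_0),w(t_0))\in\Wcal$ (either zero or $I>0$). This proves \eqref{eq:critical-entry-stable}. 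Theorems~\ref{thm:stable-global} and~\ref{thm:weak-asymptotic}, applied from $t_0$, then give global existence and weak convergence.

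Case (iii). Suppose $I(u_0,w_0)=0$ with $(u_0,w_0)\ne(0,0)$, so $(u_0,w_0)\in\Ncal$ and $J(u_0,w_0)\ge d$. The energy decomposes as
\[
 d=E(0)=\tfrac12\norm{u_1}_2^2+\tfrac12\norm{w_1}_2^2+\tfrac{\tau D}{2}\norm{\nabla z_0}_2^2+J(u_0,w_0).
\]
Every kinetic term is nonnegative and $J(u_0,w_0)\ge d$, so $J(u_0,w_0)=d$ and $u_1=w_1=z_0=0$. Hence $(u_0,w_0)$ minimizes $J$ on $\Ncal$, and Lemma~\ref{lem:Nehari-transversality} yields $J'(u_0,w_0)=0$. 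Then $Y_0\in D(\Acal)$ with $\Acal Y_0+\mathcal G(Y_0)=0$, so the constant trajectory is an energy solution. By uniqueness it is the solution, which gives \eqref{eq:critical-ground-state-data} and stationarity.
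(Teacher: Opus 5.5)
Your proposal is correct and follows essentially the same route as the paper. Both arguments use the same four ingredients:
\begin{itemize}
\item the zero-dissipation rigidity, namely that $z\equiv0$ on an interval forces a stationary state with $I=0$ when $\alpha\ne0$, and hence a strict energy drop for every nonstationary solution;
\item a restart at a small positive time, which lets you invoke Theorem~\ref{thm:below-depth-blowup} or Theorems~\ref{thm:stable-global}--\ref{thm:weak-asymptotic};
\item the equality-in-energy argument for the Nehari case;
\item Lemma~\ref{lem:Nehari-transversality} to get $J'(u_0,w_0)=0$.
\end{itemize}
The only cosmetic difference is the subcase $(u_0,w_0)=(0,0)$. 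There you use directly that $(0,0)\in\Wcal$, instead of choosing a time at which the pair is nonzero as the paper does; this is slightly simpler and equally valid.
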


\begin{proof}
We first record the structure of trajectories on which the dissipation vanishes. If $z\equiv0$ on an interval $[0,T]$ with $T>0$, then the relation $\tau z_t+z=w_t$ gives $w_t\equiv0$. The second equation of \eqref{eq:augmented-system} reduces to
\[
 A_Dw+\alpha u=0.
\]
Since $\alpha\ne0$, $u$ is independent of time; the first equation then shows that the state is stationary on $[0,T]$, and uniqueness extends it as a stationary solution on its maximal interval. In particular, every nonstationary solution satisfies
\begin{equation}\label{eq:critical-strict-energy-drop}
 E(t)=d-D\int_0^t\norm{\nabla z(s)}_2^2\,\dd s<d,
 \qquad t>0.
\end{equation}
Every stationary solution satisfies $I(u,w)=0$, as follows by testing the two stationary equations by $u$ and $w$ and adding.

Suppose first that $I(u_0,w_0)<0$. The solution cannot be stationary. By continuity, $I(u(t),w(t))<0$ for $0\le t\le t_0$ with some $t_0>0$, while \eqref{eq:critical-strict-energy-drop} gives $E(t_0)<d$. Restarting the flow at $t_0$ and applying Theorem~\ref{thm:below-depth-blowup} proves alternative \textup{(i)}.

If $I(u_0,w_0)>0$, the same argument gives a time $t_0>0$ for which $I(u(t_0),w(t_0))>0$ and $E(t_0)<d$. Since $J(u(t_0),w(t_0))\le E(t_0)<d$, the pair belongs to $\Wcal$. Theorems~\ref{thm:stable-global} and \ref{thm:weak-asymptotic}, applied after time translation, yield alternative \textup{(ii)}.

Assume next that $(u_0,w_0)=(0,0)$. Since $d>0$, the initial state is not stationary. Choose $\delta>0$ so small that
\[
 Q_\alpha(u(t),w(t))\le\rho_0,
 \qquad 0\le t\le\delta.
\]
The pair $(u(t),w(t))$ cannot vanish identically on $[0,\delta]$, for otherwise the equations would force the full state to vanish and hence $E(0)=0$. Choose $t_0\in(0,\delta]$ with $(u(t_0),w(t_0))\ne(0,0)$. Lemma~\ref{lem:local-Nehari-positive} and \eqref{eq:critical-strict-energy-drop} give
\[
 I(u(t_0),w(t_0))>0,
 \qquad E(t_0)<d,
\]
so \eqref{eq:critical-entry-stable} follows and the preceding stable argument applies.

Finally, suppose that $I(u_0,w_0)=0$ and $(u_0,w_0)\ne(0,0)$. Then $(u_0,w_0)\in\Ncal$, and therefore $J(u_0,w_0)\ge d$. Since
\[
 d=E(0)=\frac12\norm{u_1}_2^2+\frac12\norm{w_1}_2^2
 +\frac{\tau D}{2}\norm{\nabla z_0}_2^2+J(u_0,w_0),
\]
all nonnegative terms must attain equality. Hence $J(u_0,w_0)=d$ and $u_1=w_1=z_0=0$. Lemma~\ref{lem:Nehari-transversality} gives $J'(u_0,w_0)=0$, that is,
\begin{equation}\label{eq:critical-ground-state-system}
 \begin{cases}
 A_Du_0+\alpha w_0=f(u_0),\\
 A_Dw_0+\alpha u_0=0.
 \end{cases}
\end{equation}
Since $f(u_0)\in L^2(\Omega)$, elliptic regularity in \eqref{eq:critical-ground-state-system} gives $u_0,w_0\in D(A_D)$. Set $Y_*=(u_0,0,w_0,0,0)$. Then $Y_*\in D(\Acal)$ and $\Acal Y_*+\mathcal G(Y_*)=0$. Thus $Y_*$ is a stationary solution, and uniqueness gives $Y(t)\equiv Y_*$, proving alternative \textup{(iii)}.
\end{proof}

\section{Positive-energy and prescribed-level blow-up}\label{sec:positive-blowup}
The below-depth argument uses the Nehari barrier. We now derive a separate mechanism that does not involve $d$. The outward-projection argument is in the line of the positive-energy concavity methods in \cite{LevineTodorova,GazzolaSquassina}; the present adaptation is based on a shifted MGT-history functional.

\begin{lemma}[Positive-energy concavity criterion]\label{lem:positive-concavity}
Let $a>1$, $B\ge0$, $0<T_*\le\infty$, and let $\psi\in W_{\mathrm{loc}}^{2,1}([0,T_*))$ satisfy $\psi(0)>0$. Define the maximal positivity time
\begin{equation}\label{eq:positive-interval}
 T_+=\sup\bigl\{T\in(0,T_*]:\psi(t)>0\ \text{for every }0\le t<T\bigr\}.
\end{equation}
Assume that
\begin{equation}\label{eq:positive-concavity-assumption}
 \psi''(t)\ge a\frac{(\psi'(t))^2}{\psi(t)}-B
\end{equation}
for almost every $t\in(0,T_+)$, and that
\begin{equation}\label{eq:positive-concavity-initial}
 \psi'(0)>0,
 \qquad
 (\psi'(0))^2>\frac{2B}{2a-1}\psi(0).
\end{equation}
Then $T_+=T_*$, the continuation time $T_*$ is finite, and
\begin{equation}\label{eq:positive-time-bound}
 T_*\le\frac{\psi(0)^{1-a}}{(a-1)R_0^{1/2}},
 \qquad
 R_0=\psi(0)^{-2a}
 \left((\psi'(0))^2-\frac{2B}{2a-1}\psi(0)\right)>0.
\end{equation}
In particular, no positive finite continuation can exist on $[0,\infty)$.
\end{lemma}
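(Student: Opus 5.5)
The plan is the classical first-integral argument for Levine-type inequalities, carried out in the absolutely continuous setting. Since $\psi\in W^{2,1}_{\mathrm{loc}}([0,T_*))$, both $\psi$ and $\psi'$ are absolutely continuous on compact subintervals. Products and compositions with smooth functions of $\psi$ on the set where $\psi>0$ are therefore again absolutely continuous, and the product and chain rules hold almost everywhere. Set $c_B=2B/(2a-1)\ge0$ and introduce
\[
 R(t)=\psi(t)^{-2a}\bigl((\psi'(t))^2-c_B\psi(t)\bigr),
 \qquad 0\le t<T_+ .
\]
By \eqref{eq:positive-concavity-initial}, $R(0)=R_0>0$.

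\textbf{Step 1 (monotonicity of $R$).} On any interval $[0,t_1)\subset[0,T_+)$ on which $\psi'>0$, I differentiate. Almost everywhere,
\[
 \frac{\dd}{\dd t}\bigl(\psi^{-2a}(\psi')^2\bigr)
 =2\psi'\psi^{-2a}\Bigl(\psi''-a\frac{(\psi')^2}{\psi}\Bigr).
\]
Because $\psi'>0$ there, \eqref{eq:positive-concavity-assumption} bounds the right-hand side below by $-2B\psi'\psi^{-2a}$. This lower bound is exactly the derivative of $c_B\psi^{1-2a}$, since $\frac{\dd}{\dd t}\psi^{1-2a}=(1-2a)\psi^{-2a}\psi'$ and $c_B(1-2a)=-2B$. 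Hence $R'\ge0$ almost everywhere, so $R(t)\ge R_0$ on $[0,t_1)$ by absolute continuity.

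\textbf{Step 2 (continuity argument).} I show that $\psi'>0$ on all of $[0,T_+)$. Suppose not, and let $t_1<T_+$ be the first zero of $\psi'$, which exists since $\psi'$ is continuous and $\psi'(0)>0$. Step 1 gives
\[
 (\psi')^2\ge R_0\psi^{2a}+c_B\psi\ge R_0\psi^{2a}\qquad\text{on }[0,t_1).
\]
Letting $t\uparrow t_1$ yields $0=\psi'(t_1)^2\ge R_0\psi(t_1)^{2a}>0$, because $\psi(t_1)>0$ for $t_1<T_+$. This is a contradiction.

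Consequently $\psi$ is increasing on $[0,T_+)$, so $\psi\ge\psi(0)>0$ there. If $T_+<T_*$, continuity would give $\psi(T_+)\ge\psi(0)>0$, and hence positivity on a larger interval, contradicting the definition of $T_+$. Therefore $T_+=T_*$.

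\textbf{Step 3 (quantitative bound).} From Step 1 and $\psi'>0$ I obtain the first-order differential inequality
\[
 \psi'\ge R_0^{1/2}\psi^{a}\qquad\text{on }[0,T_*).
\]
Then $\frac{\dd}{\dd t}\psi^{1-a}=(1-a)\psi^{-a}\psi'\le-(a-1)R_0^{1/2}$, so
\[
 0<\psi(t)^{1-a}\le\psi(0)^{1-a}-(a-1)R_0^{1/2}t,\qquad 0\le t<T_*.
\]
Since the left-hand side is positive for every $t<T_*$, this forces $t<\psi(0)^{1-a}/\bigl((a-1)R_0^{1/2}\bigr)$ for all such $t$. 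This gives \eqref{eq:positive-time-bound}; in particular $T_*<\infty$, so no positive continuation exists on $[0,\infty)$.

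\textbf{Main obstacle.} The only delicate point is the regularity bookkeeping. One needs that $\psi^{-2a}(\psi')^2$ and $\psi^{1-2a}$ are absolutely continuous on compact subsets of $[0,T_+)$, so that the almost-everywhere inequality $R'\ge0$ integrates to $R(t)\ge R_0$. One also needs $\psi'$ to be continuous, so that a first zero $t_1$ exists in Step 2. Both follow from $\psi'\in W^{1,1}_{\mathrm{loc}}$ together with the positive lower bound on $\psi$ on compact subintervals of $[0,T_+)$, which keeps all powers of $\psi$ smooth along the trajectory.
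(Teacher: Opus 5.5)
Your proposal is correct and follows the paper's proof essentially step for step. Both use the first integral $R=(\psi')^2\psi^{-2a}-\frac{2B}{2a-1}\psi^{1-2a}$, show $R$ is nondecreasing while $\psi'>0$, and prove by continuity that $\psi'$ cannot vanish before $T_+$ (your first-zero argument is the paper's $\tau_+=T_+$ step); both then integrate $\frac{\dd}{\dd t}\psi^{1-a}\le-(a-1)R_0^{1/2}$ to obtain the bound.
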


\begin{proof}
Since $\psi\in W_{\mathrm{loc}}^{2,1}$, one has $\psi\in C^1$, and therefore $T_+>0$. Set
\[
 C=\frac{2B}{2a-1},
 \qquad
 R(t)=(\psi'(t))^2\psi(t)^{-2a}-C\psi(t)^{1-2a},
 \qquad 0\le t<T_+.
\]
Let
\[
 \tau_+=\sup\bigl\{T\in(0,T_+]:\psi'(t)>0\ \text{for every }0\le t<T\bigr\}.
\]
Condition \eqref{eq:positive-concavity-initial} and continuity of $\psi'$ imply $\tau_+>0$. On $(0,\tau_+)$, the Sobolev chain rule gives, for almost every $t$,
\[
 R'(t)=2\psi'(t)\psi(t)^{-2a}
 \left(\psi''(t)-a\frac{(\psi'(t))^2}{\psi(t)}+B\right)\ge0.
\]
Hence $R(t)\ge R(0)=R_0>0$ and, since $\psi'(t)>0$ on this interval,
\begin{equation}\label{eq:positive-derivative-lower}
 \psi'(t)\ge R_0^{1/2}\psi(t)^a,
 \qquad 0\le t<\tau_+.
\end{equation}
If $\tau_+<T_+$, continuity and \eqref{eq:positive-derivative-lower} would give
\[
 \psi'(\tau_+)\ge R_0^{1/2}\psi(\tau_+)^a>0,
\]
which extends the strict positivity of $\psi'$ beyond $\tau_+$ and contradicts its definition. Thus $\tau_+=T_+$. Consequently, $\psi$ is increasing and $\psi(t)\ge\psi(0)>0$ for every $t<T_+$. If $T_+<T_*$, continuity at $T_+$ would extend positivity beyond $T_+$, again contradicting the definition of $T_+$. Hence $T_+=T_*$.

It follows from \eqref{eq:positive-derivative-lower} that
\[
 \frac{\dd}{\dd t}\psi(t)^{1-a}
 =(1-a)\psi(t)^{-a}\psi'(t)
 \le-(a-1)R_0^{1/2}
\]
for almost every $t<T_*$. Therefore
\[
 0<\psi(t)^{1-a}
 \le\psi(0)^{1-a}-(a-1)R_0^{1/2}t,
 \qquad 0\le t<T_*.
\]
The right-hand side must remain positive for every $t<T_*$, which proves \eqref{eq:positive-time-bound}.
\end{proof}

Recall that $v_0=w_0-\tau z_0$.

\begin{theorem}[Positive-energy blow-up for arbitrary initial MGT displacement]\label{thm:positive-blowup}
Assume \eqref{eq:parameter-assumptions}, \textup{(F1)}, and \textup{(F2)}. Let $Y_0\in\Hcal$, set $v_0=w_0-\tau z_0$, and define
\begin{equation}\label{eq:M0P0}
 M_0=\norm{u_0}_2^2+\norm{w_0}_2^2,
 \qquad
 P_0=\ip{u_0}{u_1}+\ip{w_0}{w_1},
\end{equation}
\begin{equation}\label{eq:Xi0}
 \Xi_0=2E(0)+\frac{D}{\theta\tau(\theta-2)}\norm{\nabla v_0}_2^2.
\end{equation}
If
\begin{equation}\label{eq:positive-blowup-condition}
 E(0)\ge0,
 \qquad P_0>0,
 \qquad P_0^2>\Xi_0M_0,
\end{equation}
then the maximal solution blows up in finite time. More precisely,
\begin{equation}\label{eq:positive-blowup-bound}
 T_{\max}\le
 \frac{2M_0}{(\theta-2)\sqrt{P_0^2-\Xi_0M_0}}.
\end{equation}
Furthermore,
\begin{equation}\label{eq:positive-gradient-blowup}
 \limsup_{t\uparrow T_{\max}}
 \bigl(\norm{\nabla u(t)}_2^2+\norm{\nabla w(t)}_2^2\bigr)=\infty.
\end{equation}
\end{theorem}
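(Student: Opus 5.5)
We apply Lemma~\ref{lem:positive-concavity} to a shifted MGT-history functional chosen so that $\psi(0)=M_0$ and $\psi'(0)=2P_0$. With $v=w-\tau z$ as in Remark~\ref{rem:recover-v}, set, on $[0,T_{\max})$,
\[
 \psi(t)=\norm{u(t)}_2^2+\norm{w(t)}_2^2+D\int_0^t\norm{\nabla\bigl(v(s)-v_0\bigr)}_2^2\,\dd s .
\]
The shift by $v_0$ kills the history term at $t=0$. Since $\partial_t\norm{\nabla(v-v_0)}_2^2=2\ip{\nabla(v-v_0)}{\nabla z}$, we get
\[
 \psi'(t)=2\ip{u}{u_t}+2\ip{w}{w_t}+2D\int_0^t\ip{\nabla(v-v_0)}{\nabla z}\,\dd s .
\]
Hence $\psi(0)=M_0>0$ and $\psi'(0)=2P_0>0$. (If $M_0=0$ then $P_0=0$, so $M_0>0$ is forced by \eqref{eq:positive-blowup-condition}.) The $W^{2,1}_{\rm loc}$ regularity and the formula for $\psi''$ are justified exactly as in the proof of Theorem~\ref{thm:below-depth-blowup}, by approximation with $D(\Acal)$ data.

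The second derivative is computed as in \eqref{eq:below-Psi-second-raw}. The history cancellation now leaves
\[
 -2D\ip{\nabla w}{\nabla z}+2D\ip{\nabla(v-v_0)}{\nabla z}=-2\tau D\norm{\nabla z}_2^2-2D\ip{\nabla v_0}{\nabla z}.
\]
Using $uf(u)=\theta F(u)+\theta H_\theta(u)$ and the exact energy identity \eqref{eq:exact-energy}, this gives
\begin{align*}
 \psi''={}&(\theta+2)K+(\theta-2)\tau D\norm{\nabla z}_2^2+2\theta D\int_0^t\norm{\nabla z}_2^2\,\dd s\\
 &+(\theta-2)Q_\alpha(u,w)+2\theta\int_\Omega H_\theta(u)\,\dd x-2\theta E(0)-2D\ip{\nabla v_0}{\nabla z},
\end{align*}
where $K=\norm{u_t}_2^2+\norm{w_t}_2^2$. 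The key step is Young's inequality
\[
 2D\abs{\ip{\nabla v_0}{\nabla z}}\le(\theta-2)\tau D\norm{\nabla z}_2^2+\frac{D}{(\theta-2)\tau}\norm{\nabla v_0}_2^2 .
\]
It absorbs the cross term into the positive dissipation term and produces exactly the constant appearing in $\Xi_0$. We then drop $Q_\alpha\ge0$ (Lemma~\ref{lem:Q-coercive}) and $\int H_\theta\ge0$ from (F2), and use $2\theta\ge\theta+2$. Cauchy--Schwarz on $\psi'$ gives $(\psi')^2\le4\psi\,(K+D\int_0^t\norm{\nabla z}_2^2)$. Together these yield
\[
 \psi''\ge a\frac{(\psi')^2}{\psi}-B,\qquad a=\frac{\theta+2}{4}>1,\qquad B=2\theta E(0)+\frac{D}{(\theta-2)\tau}\norm{\nabla v_0}_2^2=\theta\,\Xi_0\ge0,
\]
valid wherever $\psi>0$; here $B\ge0$ uses $E(0)\ge0$.

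Now $2a-1=\theta/2$, so $2B/(2a-1)=4\Xi_0$. The initial condition \eqref{eq:positive-concavity-initial} then reads $4P_0^2>4\Xi_0M_0$, which is exactly \eqref{eq:positive-blowup-condition}. Apply Lemma~\ref{lem:positive-concavity} with $T_*=T_{\max}$. Its conclusion forbids $T_{\max}=\infty$ and gives
\[
 T_{\max}\le\frac{M_0^{1-a}}{(a-1)M_0^{-a}\cdot2\sqrt{P_0^2-\Xi_0M_0}}=\frac{2M_0}{(\theta-2)\sqrt{P_0^2-\Xi_0M_0}},
\]
which is \eqref{eq:positive-blowup-bound}. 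The gradient blow-up \eqref{eq:positive-gradient-blowup} follows as at the end of Theorem~\ref{thm:below-depth-blowup}. Bounded gradients would bound $\int_\Omega F(u)$ via \eqref{eq:F-eps} and Sobolev embedding, hence bound every component of $Y$ through $E(t)\le E(0)$ and coercivity, contradicting \eqref{eq:continuation-alternative}.

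The main obstacle is choosing the shift. An unshifted history term makes $\psi'(0)$ pick up $D\norm{\nabla v_0}_2^2$ and spoils Cauchy--Schwarz. Shifting by $v_0$ trades that for the cross term $-2D\ip{\nabla v_0}{\nabla z}$, which must be absorbed by precisely the $(\theta-2)\tau D\norm{\nabla z}_2^2$ surplus; this dictates the constant $D/(\theta\tau(\theta-2))$ in $\Xi_0$.
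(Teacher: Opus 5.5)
Your proposal is correct and follows the paper's proof essentially step for step. It uses the same shifted history functional with $\Psi(0)=M_0$ and $\Psi'(0)=2P_0$, the same Young absorption of $-2D\ip{\nabla v_0}{\nabla z}$ into the $(\theta-2)\tau D\norm{\nabla z}_2^2$ surplus, the same constants $a=(\theta+2)/4$ and $B=\theta\,\Xi_0$, and applies Lemma~\ref{lem:positive-concavity} with $T_*=T_{\max}$. The only difference is cosmetic: the paper writes out the $D(\Acal)$-approximation for $\Psi\in W^{2,1}_{\mathrm{loc}}$ inside this proof, tracking the approximate shifts $v_{0m}$, whereas you defer to the below-depth argument, which carries over verbatim.
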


\begin{proof}
Let $[0,T_{\max})$ be the maximal interval of existence. Since $P_0>0$, one necessarily has $M_0>0$. Use the shifted MGT history
\begin{equation}\label{eq:positive-Psi}
 \Psi(t)=\norm{u(t)}_2^2+\norm{w(t)}_2^2
 +D\int_0^t\norm{\nabla(v(s)-v_0)}_2^2\,\dd s,
 \qquad 0\le t<T_{\max}.
\end{equation}
Then $\Psi(0)=M_0>0$. Define
\begin{equation}\label{eq:Psi-positive-time}
 T_+=\sup\bigl\{T\in(0,T_{\max}]:\Psi(t)>0\ \text{for every }0\le t<T\bigr\}.
\end{equation}

Since $v_t=z$ and $v(0)=v_0$, Remark~\ref{rem:recover-v} gives
\[
 \norm{\nabla(v(t)-v_0)}_2^2
 =2\int_0^t\ip{\nabla(v(s)-v_0)}{\nabla z(s)}\,\dd s.
\]
Thus, at the classical level, differentiation of \eqref{eq:positive-Psi} gives
\begin{equation}\label{eq:positive-Psi-prime}
 \Psi'(t)=2\ip{u}{u_t}+2\ip{w}{w_t}
 +2D\int_0^t\ip{\nabla(v(s)-v_0)}{\nabla z(s)}\,\dd s.
\end{equation}

We now justify \eqref{eq:positive-Psi-prime} and the subsequent
second differentiation at the energy level. Fix $0<R<T_{\max}$ and choose $Y_{0m}\in D(\Acal)$ with $Y_{0m}\to Y_0$ in $\Hcal$. By continuous dependence, the corresponding classical solutions satisfy
\[
 Y_m\to Y\qquad\text{in }C([0,R];\Hcal)
\]
for all sufficiently large $m$. Set $v_m=w_m-\tau z_m$, $v_{0m}=w_{0m}-\tau z_{0m}$, and define $\Psi_m$ by \eqref{eq:positive-Psi} with the approximating variables. At the classical level, all identities below follow by ordinary differentiation. Strong phase convergence yields
\[
 v_m\to v\quad\text{in }C([0,R];H_0^1),
 \qquad
 \Psi_m\to\Psi\quad\text{uniformly on }[0,R],
\]
and the classical first-derivative expressions converge uniformly to the right-hand side of \eqref{eq:positive-Psi-prime}. Hence $\Psi\in C^1([0,R])$ and \eqref{eq:positive-Psi-prime} holds. The right-hand sides of the classical second-derivative identities converge in $L^1(0,R)$: the quadratic terms converge by strong phase convergence, and
\[
 \int_\Omega u_mf(u_m)\,\dd x\to\int_\Omega uf(u)\,\dd x,
 \qquad
 \int_\Omega H_\theta(u_m)\,\dd x\to\int_\Omega H_\theta(u)\,\dd x
\]
by the continuity of $f:H_0^1\to L^2$, of $\Phi:H_0^1\to\R$, and the identity $H_\theta(s)=\theta^{-1}sf(s)-F(s)$. Thus $\Psi\in W^{2,1}(0,R)$ and the second-derivative formulas below hold almost everywhere. Since $R<T_{\max}$ is arbitrary, the calculation is valid on every compact subinterval of the maximal interval.

Differentiating \eqref{eq:positive-Psi-prime} at the classical level and passing to the energy solution as above gives
\begin{align}\label{eq:positive-Psi-second-raw}
 \Psi''(t)={}&2K(t)-2Q_\alpha(u,w)+2\int_\Omega uf(u)\,\dd x
 -2\tau D\norm{\nabla z(t)}_2^2\notag\\
 &-2D\ip{\nabla v_0}{\nabla z(t)},
\end{align}
where $K(t)=\norm{u_t(t)}_2^2+\norm{w_t(t)}_2^2$. Reconstructing the source through the exact energy yields
\begin{align}\label{eq:positive-Psi-second}
 \Psi''(t)={}&(\theta+2)K(t)+(\theta-2)\tau D\norm{\nabla z(t)}_2^2
 +2\theta D\int_0^t\norm{\nabla z(s)}_2^2\,\dd s\notag\\
 &+(\theta-2)Q_\alpha(u,w)+2\theta\int_\Omega H_\theta(u)\,\dd x
 -2\theta E(0)-2D\ip{\nabla v_0}{\nabla z(t)}.
\end{align}
Young's inequality gives
\begin{equation}\label{eq:v0-young}
 -2D\ip{\nabla v_0}{\nabla z}
 \ge-(\theta-2)\tau D\norm{\nabla z}_2^2
 -\frac{D}{\tau(\theta-2)}\norm{\nabla v_0}_2^2.
\end{equation}
Set
\begin{equation}\label{eq:positive-X}
 X(t)=K(t)+D\int_0^t\norm{\nabla z(s)}_2^2\,\dd s,
\end{equation}
and
\begin{equation}\label{eq:positive-B0}
 B_0=2\theta E(0)+\frac{D}{\tau(\theta-2)}\norm{\nabla v_0}_2^2\ge0.
\end{equation}
Using $2\theta\ge\theta+2$, $Q_\alpha\ge0$, and $H_\theta\ge0$, formulas \eqref{eq:positive-Psi-second}--\eqref{eq:v0-young} imply
\begin{equation}\label{eq:positive-Psi-X}
 \Psi''(t)\ge(\theta+2)X(t)-B_0.
\end{equation}
Cauchy--Schwarz applied to \eqref{eq:positive-Psi-prime} gives
\begin{equation}\label{eq:positive-CS}
 (\Psi'(t))^2\le4\Psi(t)X(t).
\end{equation}
Consequently, on the positivity interval $(0,T_+)$,
\begin{equation}\label{eq:positive-concavity-final}
 \Psi''(t)\ge a\frac{(\Psi'(t))^2}{\Psi(t)}-B_0,
 \qquad a=\frac{\theta+2}{4}>1,
\end{equation}
for almost every $t$. At $t=0$, $\Psi(0)=M_0$ and $\Psi'(0)=2P_0$. Since $2a-1=\theta/2$ and $B_0/\theta=\Xi_0$, condition \eqref{eq:positive-blowup-condition} is exactly \eqref{eq:positive-concavity-initial}. Lemma~\ref{lem:positive-concavity}, applied with $T_*=T_{\max}$, first shows that $T_+=T_{\max}$ and then yields
\[
 T_{\max}\le\frac{\Psi(0)^{1-a}}{(a-1)R_0^{1/2}}.
\]
Substituting $\Psi(0)=M_0$, $\Psi'(0)=2P_0$, and $B_0/\theta=\Xi_0$ gives exactly \eqref{eq:positive-blowup-bound}; in particular, $T_{\max}<\infty$. The proof of \eqref{eq:positive-gradient-blowup} is identical to the continuation argument at the end of Theorem~\ref{thm:below-depth-blowup}.
\end{proof}

\begin{corollary}[Prescribed nonnegative energy]\label{cor:prescribed-energy}
Assume \eqref{eq:parameter-assumptions} and \textup{(F1)}, \textup{(F2)}, \textup{(F4)}. For every $L\ge0$, there exist initial data satisfying the hypotheses of Theorem~\ref{thm:positive-blowup} and
\begin{equation}\label{eq:prescribed-energy}
 E(0)=L.
\end{equation}
Consequently, for every $L\ge0$, the energy shell
\[
 \bigl\{Y_0\in\Hcal:E(Y_0)=L\bigr\}
\]
contains initial data whose corresponding solution blows up in finite time.
\end{corollary}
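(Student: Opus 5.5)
The construction scales a focusing profile so that its potential energy is very negative. Let $\phi$ be given by (F4), and take initial data with $z_0=0$ and $w_0=0$, so that $v_0=0$ and $\Xi_0=2E(0)$. Take $u_0=\lambda\phi$ with $\lambda\ge1$ large, $u_1=\mu\phi$ with $\mu>0$ to be fixed, and $w_1=0$. Then
\[
 M_0=\lambda^2\norm{\phi}_2^2,\qquad P_0=\lambda\mu\norm{\phi}_2^2>0,
\]
and
\[
 E(0)=\frac{\mu^2}{2}\norm{\phi}_2^2+\frac{\lambda^2}{2}\norm{\nabla\phi}_2^2-\int_\Omega F(\lambda\phi)\,\dd x .
\]
By Lemma~\ref{lem:potential-scaling}, $\int_\Omega F(\lambda\phi)\,\dd x\ge\lambda^\theta\int_\Omega F(\phi)\,\dd x$. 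Since $\theta>2$, the static part
\[
 j(\lambda):=\frac{\lambda^2}{2}\norm{\nabla\phi}_2^2-\int_\Omega F(\lambda\phi)\,\dd x
\]
is negative for $\lambda$ large.

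Fix such a $\lambda$ with $j(\lambda)<0$. Choose $\mu\ge0$ so that
\[
 \frac{\mu^2}{2}\norm{\phi}_2^2=L-j(\lambda),
\]
which gives $E(0)=L$. This is possible because $L-j(\lambda)>0$, and it forces $\mu>0$, so $P_0>0$.

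It remains to verify $P_0^2>\Xi_0M_0=2L\lambda^2\norm{\phi}_2^2$. The condition reads
\[
 \lambda^2\mu^2\norm{\phi}_2^4>2L\lambda^2\norm{\phi}_2^2,
\qquad\text{i.e.}\qquad
 \mu^2\norm{\phi}_2^2>2L .
\]
By the choice of $\mu$, $\mu^2\norm{\phi}_2^2=2L-2j(\lambda)>2L$, because $j(\lambda)<0$. Hence all of \eqref{eq:positive-blowup-condition} holds, and $E(0)=L\ge0$.

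Theorem~\ref{thm:positive-blowup} then yields finite-time blow-up. The data lie in $\Hcal$ since $\phi\in H_0^1$. Only (F1), (F2) and (F4) are used: (F2) enters through Lemma~\ref{lem:potential-scaling}, and (F1) is needed for well-posedness and for the theorem.

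The only delicate point is the sign of $j(\lambda)$. This requires $\int_\Omega F(\phi)\,\dd x>0$ together with the superquadratic scaling, which is exactly what Lemma~\ref{lem:potential-scaling} provides. Everything else is a direct check.
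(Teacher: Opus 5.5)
Your proposal is correct and follows the paper's own construction: you scale the focusing profile so that $J(\lambda\phi,0)<0$, set $w_0=w_1=z_0=0$ so that $v_0=0$ and $\Xi_0=2L$, and tune a velocity along $\phi$ to reach $E(0)=L$, which gives $P_0^2-\Xi_0M_0=-2J(u_0,0)\norm{u_0}_2^2>0$. The only difference is a reparametrization: you write $u_1=\mu\phi$, whereas the paper writes $u_1=\mu u_0$.
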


\begin{proof}
Choose $\phi\in H_0^1(\Omega)$ with
$A_F:=\int_\Omega F(\phi)\,\dd x>0$. By Lemma~\ref{lem:potential-scaling},
\[
 J(\lambda\phi,0)
 \le\frac{\lambda^2}{2}\norm{\nabla\phi}_2^2-\lambda^\theta A_F,
 \qquad \lambda\ge1.
\]
Choose $\lambda$ so large that $J(u_0,0)<0$, where $u_0=\lambda\phi$. Set
\[
 v_0=v_1=v_2=0,
 \qquad w_0=w_1=z_0=0,
\]
and define
\[
 u_1=\mu u_0,
 \qquad
 \mu=\left(\frac{2(L-J(u_0,0))}{\norm{u_0}_2^2}\right)^{1/2}>0.
\]
Then
\[
 E(0)=\frac12\mu^2\norm{u_0}_2^2+J(u_0,0)=L,
\]
while
\[
 M_0=\norm{u_0}_2^2,
 \qquad
 P_0=\mu\norm{u_0}_2^2>0.
\]
Moreover, $v_0=0$ gives $\Xi_0=2E(0)=2L$, and hence
\[
 P_0^2-\Xi_0M_0
 =P_0^2-2E(0)M_0
 =-2J(u_0,0)\norm{u_0}_2^2>0.
\]
Thus all hypotheses of Theorem~\ref{thm:positive-blowup} are satisfied, and the corresponding maximal solution blows up in finite time.
\end{proof}

\begin{remark}[Why the shifted history is preferable]
When $v_0=0$, Theorem~\ref{thm:positive-blowup} reduces to $P_0^2>2E(0)M_0$. For general $v_0$, a terminal correction of the type used in \eqref{eq:below-Psi} would make the threshold depend on an auxiliary terminal parameter. The shifted history \eqref{eq:positive-Psi} avoids this implicit compatibility issue and yields the data-only condition \eqref{eq:positive-blowup-condition}.
\end{remark}

\section{Power and logarithmic sources}\label{sec:examples}
We verify the general hypotheses for the two principal examples. To avoid repeating the stable-side alternatives, for $\rho>0$ we write $\mathrm{(C)}_\rho$ when at least one of the following holds: \textup{(HF)}; $\mathrm{(SV)}_\sigma$ for some $0<\sigma<\rho$; \eqref{eq:negative-FV} for some $0\le s<\rho$; \textup{(SFV)}; or \eqref{eq:eventual-fractional-bound} for some $\delta>0$.

\subsection{Pure-power source}
Let
\begin{equation}\label{eq:power-source}
 f(s)=\abs{s}^q s,
 \qquad q>0.
\end{equation}
Then
\begin{equation}\label{eq:power-F-H}
 F(s)=\frac1{q+2}\abs{s}^{q+2},
 \qquad \theta=q+2,
 \qquad H_{q+2}(s)\equiv0.
\end{equation}
Assumption \textup{(F1)} holds provided
\begin{equation}\label{eq:q-range}
 0<q<p^*.
\end{equation}
The focusing condition is automatic for every nonzero $\phi\in H_0^1(\Omega)$. For this source,
\begin{equation}\label{eq:power-JI}
 J_q(u,w)=\frac12Q_\alpha(u,w)-\frac1{q+2}\norm{u}_{q+2}^{q+2},
 \qquad
 I_q(u,w)=Q_\alpha(u,w)-\norm{u}_{q+2}^{q+2}.
\end{equation}
Define
\begin{equation}\label{eq:Kalphaq}
 K_{\alpha,q}=
 \sup_{(u,w)\ne(0,0)}
 \frac{\norm{u}_{q+2}}{Q_\alpha(u,w)^{1/2}}.
\end{equation}
Sobolev embedding and coercivity give $0<K_{\alpha,q}<\infty$.

\begin{proposition}[Explicit power well depth]\label{prop:power-depth}
For \eqref{eq:power-source}--\eqref{eq:q-range},
\begin{equation}\label{eq:power-depth}
 d_q=\frac{q}{2(q+2)}K_{\alpha,q}^{-2(q+2)/q}.
\end{equation}
\end{proposition}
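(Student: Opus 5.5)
The plan is to prove \eqref{eq:power-depth} as two matching inequalities, using the Nehari constraint together with the homogeneity of the pure-power source. Write $Q=Q_\alpha(u,w)$ and $N=\norm{u}_{q+2}^{q+2}$. Because $H_{q+2}\equiv0$, Lemma~\ref{lem:structural-identity} with $\theta=q+2$ reduces to
\[
 J_q(u,w)=\frac{q}{2(q+2)}Q+\frac1{q+2}I_q(u,w),
\]
so on $\Ncal$ one has $J_q=\frac{q}{2(q+2)}Q$. The problem therefore becomes computing $\inf_{\Ncal}Q$.

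\emph{Lower bound.} Take $(u,w)\in\Ncal$. Then $Q=N$, and the definition \eqref{eq:Kalphaq} gives $N\le K_{\alpha,q}^{q+2}Q^{(q+2)/2}$. Hence $Q\le K_{\alpha,q}^{q+2}Q^{(q+2)/2}$. Since $Q>0$, dividing yields $Q^{q/2}\ge K_{\alpha,q}^{-(q+2)}$, that is, $Q\ge K_{\alpha,q}^{-2(q+2)/q}$. Taking the infimum over $\Ncal$ gives $d_q\ge\frac{q}{2(q+2)}K_{\alpha,q}^{-2(q+2)/q}$.

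\emph{Upper bound.} Let $(u,w)$ be any pair with $u\ne0$; pairs with $u=0$ contribute nothing to the supremum, which is positive. Scale by $\lambda>0$. Then $I_q(\lambda u,\lambda w)=\lambda^2Q-\lambda^{q+2}N$, which vanishes exactly when $\lambda^q=Q/N$, and this puts $(\lambda u,\lambda w)$ on $\Ncal$. At that $\lambda$,
\[
 Q_\alpha(\lambda u,\lambda w)=\lambda^2Q=\bigl(Q^{(q+2)/2}/N\bigr)^{2/q}=\Bigl(\frac{\norm{u}_{q+2}}{Q^{1/2}}\Bigr)^{-2(q+2)/q}.
\]
Consequently $d_q\le\frac{q}{2(q+2)}\bigl(\norm{u}_{q+2}/Q^{1/2}\bigr)^{-2(q+2)/q}$ for every admissible pair. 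Taking the supremum of the ratio over such pairs gives the reverse inequality. This step works without knowing whether the supremum in \eqref{eq:Kalphaq} is attained, because the formula is monotone in the ratio and passing to the supremum is enough.

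The argument is mostly routine. The only point to check is that the exponent algebra, $(\lambda^2Q)$ with $\lambda^q=Q/N$, gives exactly the power $-2(q+2)/q$. I do not expect any step to be a real obstacle. Positivity and finiteness of $K_{\alpha,q}$ were already recorded, so both bounds are meaningful.
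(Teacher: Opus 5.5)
Your proposal is correct and follows the paper's proof: the lower bound comes from the Nehari constraint $Q_\alpha=\norm{u}_{q+2}^{q+2}$ combined with the definition of $K_{\alpha,q}$. Sharpness comes from rescaling pairs onto the Nehari manifold, and your explicit computation $\lambda^2Q=\bigl(\norm{u}_{q+2}/Q^{1/2}\bigr)^{-2(q+2)/q}$ followed by taking the supremum is exactly the paper's ``maximizing sequence, rescaled onto the Nehari manifold'' step, written out in full.
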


\begin{proof}
On the power Nehari manifold, $Q_\alpha(u,w)=\norm{u}_{q+2}^{q+2}$ and
\[
 J_q(u,w)=\frac{q}{2(q+2)}Q_\alpha(u,w).
\]
The definition of $K_{\alpha,q}$ gives
\[
 Q_\alpha(u,w)=\norm{u}_{q+2}^{q+2}
 \le K_{\alpha,q}^{q+2}Q_\alpha(u,w)^{(q+2)/2},
\]
so $Q_\alpha(u,w)\ge K_{\alpha,q}^{-2(q+2)/q}$. A maximizing sequence in \eqref{eq:Kalphaq}, rescaled onto the Nehari manifold, shows sharpness.
\end{proof}

Set
\begin{equation}\label{eq:power-sigma-star}
 \sigma_{*,q}=
 \begin{cases}
 1,&n=1,2,\\[1mm]
 1-\dfrac{(n-2)q}{2},&n\ge3.
 \end{cases}
\end{equation}

\begin{corollary}[Power-source dynamics]\label{cor:power-dynamics}
Assume \eqref{eq:parameter-assumptions} and \eqref{eq:q-range}. Then:
\begin{enumerate}[label=\textup{(\roman*)},leftmargin=2.2em]
\item Every finite-energy initial state generates a unique maximal energy solution satisfying the exact energy identity and continuation alternative.
\item If $E(0)<d_q$ and $I_q(u_0,w_0)>0$, the solution is global and the stable set is invariant. If $0<\abs{\alpha}<\lambda_1$, then $Y(t)\rightharpoonup0$ in $\Hcal$, and $Y(t)\to0$ strongly under $\mathrm{(C)}_{\sigma_{*,q}}$. If, in addition, $Y_0\in\Hcal_1$, then regularity propagates as in Theorem~\ref{thm:regular-propagation}; under \textup{(SFV)}, the uniform regular bound and higher-order dissipation in \eqref{eq:regular-SFV-conclusion} hold.
\item If $0<\abs{\alpha}<\lambda_1$ and $E(0)=d_q$, the critical-level alternatives of Theorem~\ref{thm:critical-depth} hold with $I$ and $J$ replaced by $I_q$ and $J_q$.
\item If $E(0)<d_q$ and $I_q(u_0,w_0)<0$, then the maximal solution blows up in finite time.
\item If $E(0)\ge0$, $P_0>0$, and
\[
 P_0^2>
 \left(2E(0)+\frac{D}{(q+2)\tau q}\norm{\nabla v_0}_2^2\right)M_0,
\]
then finite-time blow-up occurs. Blow-up data exist at every prescribed level $L\ge0$.
\end{enumerate}
\end{corollary}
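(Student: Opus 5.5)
The plan is to verify that the pure-power source $f(s)=\abs{s}^qs$ with $0<q<p^*$ satisfies \textup{(F1)}--\textup{(F4)} with $\theta=q+2$. After that, each item of Corollary~\ref{cor:power-dynamics} follows from the corresponding general theorem, with $d$ replaced by the explicit value $d_q$ from Proposition~\ref{prop:power-depth}.

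For \textup{(F1)}, we have $f\in C^1(\R)$ because $q>0$, and $f(s)/s=\abs{s}^q\to0$ as $s\to0$. Moreover $f'(s)=(q+1)\abs{s}^q$, so \eqref{eq:F1-growth} holds with $p=q\in(0,p^*)$. For \textup{(F2)} and \textup{(F3)}, compute $F(s)=\abs{s}^{q+2}/(q+2)$ and $sf(s)=\abs{s}^{q+2}$. Hence $H_{q+2}\equiv0$, so nonnegativity and radial monotonicity hold trivially. Equivalently, $s^2f'(s)=(q+1)sf(s)=(\theta-1)sf(s)$, which is \eqref{eq:Htheta-differential} with equality. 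For \textup{(F4)}, any nonzero $\phi$ gives $\int_\Omega F(\phi)\,\dd x>0$. Since $p=q$, the exponent $\sigma_*$ in \eqref{eq:sigma-star} coincides with $\sigma_{*,q}$. Finally, the functionals $J,I$ of \eqref{eq:J-I} reduce to $J_q,I_q$ in \eqref{eq:power-JI}, and $d=d_q$.

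With the hypotheses verified, the items follow in order.

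Item (i) is Theorem~\ref{thm:local-wp}.

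For item (ii), global existence and invariance come from Theorem~\ref{thm:stable-global}, and weak convergence from Theorem~\ref{thm:weak-asymptotic}. Strong convergence under $\mathrm{(C)}_{\sigma_{*,q}}$ is obtained by dispatching each alternative:
\begin{itemize}
\item \textup{(HF)} or $\mathrm{(SV)}_\sigma$ via Theorem~\ref{thm:HF-SV};
\item \eqref{eq:negative-FV} via Corollary~\ref{cor:negative-finite-variation};
\item \textup{(SFV)} via Corollary~\ref{cor:source-variation};
\item the fractional bound via Corollary~\ref{cor:fractional-regularity}.
\end{itemize}
The regularity statements are Theorems~\ref{thm:regular-propagation} and \ref{thm:regular-SFV}.

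Item (iii) is Theorem~\ref{thm:critical-depth}, and item (iv) is Theorem~\ref{thm:below-depth-blowup}.

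For item (v), substitute $\theta=q+2$ into \eqref{eq:Xi0}. Since $\theta(\theta-2)=(q+2)q$, this gives
\[
 \Xi_0=2E(0)+\frac{D}{(q+2)\tau q}\norm{\nabla v_0}_2^2,
\]
which is exactly the displayed condition, so Theorem~\ref{thm:positive-blowup} applies. The existence of blow-up data at each prescribed level $L\ge0$ is Corollary~\ref{cor:prescribed-energy}.

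There is no real obstacle. The only points needing care are the identification $\sigma_*=\sigma_{*,q}$, which holds because the growth exponent is $p=q$ and no slack is lost, and the use of the explicit $d_q$, which is legitimate by Proposition~\ref{prop:power-depth}. That proposition in turn rests on $0<K_{\alpha,q}<\infty$, which follows from $q+2<2^*$ and Lemma~\ref{lem:Q-coercive}.
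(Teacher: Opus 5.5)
Your proposal is correct and follows the paper's proof: verify \textup{(F1)}--\textup{(F4)} with $p=q$, $\theta=q+2$ and $H_{q+2}\equiv0$, note that $\sigma_*=\sigma_{*,q}$, and then apply the general results of Sections~\ref{sec:local}--\ref{sec:positive-blowup} item by item. Your explicit check that $\theta(\theta-2)=(q+2)q$ turns $\Xi_0$ into the threshold displayed in item (v) is a detail the paper leaves implicit.
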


\begin{proof}
All source assumptions hold with $p=q$, $\theta=q+2$, and $H_{q+2}\equiv0$. The conclusions follow from the general results of Sections~\ref{sec:local}--\ref{sec:positive-blowup}, with $\sigma_*=\sigma_{*,q}$.
\end{proof}

\begin{remark}[The $L^2$-subcritical and energy-subcritical power ranges]\label{rem:power-range}
For $n\ge3$, the restriction $q<p^*=2/(n-2)$ in \eqref{eq:q-range} is stronger than the variational energy-subcritical condition
\[
 q+2<2^*,
 \qquad\text{equivalently}\qquad
 q<\frac{4}{n-2}.
\]
The latter condition is sufficient for the potential term $\norm{u}_{q+2}^{q+2}$, the functionals $J_q$ and $I_q$, and the variational formula \eqref{eq:power-depth} to be well-defined on $H_0^1(\Omega)$. The narrower range used in Corollary~\ref{cor:power-dynamics} is dictated instead by the strict $L^2$-subcritical framework \textup{(F1)}:
\[
 2(q+1)<2^*.
\]
This inequality ensures that $f:H_0^1(\Omega)\to L^2(\Omega)$ is locally Lipschitz and, equally importantly, leaves the positive compactness margin $\sigma_{*,q}>0$ used in the nonlinear convergence and high-frequency tail estimates. At the borderline $q=2/(n-2)$, the power source is still $L^2$-valued, but this margin collapses; for $2/(n-2)<q<4/(n-2)$, the source is in general only $H^{-1}$-valued at the energy level. Consequently, the semigroup, uniqueness, source-variation, weak-asymptotic, and high-frequency arguments of the present paper do not extend to the full variational range by a mere change of exponent. Such an extension would require a different local well-posedness and uniqueness theory, together with a reformulation of the compactness and flux estimates.
\end{remark}

\subsection{Logarithmic source}
Let
\begin{equation}\label{eq:log-source}
 f(s)=\abs{s}^{\gamma-2}s\log\abs{s},
 \qquad f(0)=0,
 \qquad \gamma>2.
\end{equation}
Its primitive is
\begin{equation}\label{eq:log-F}
 F(s)=\frac1\gamma\abs{s}^\gamma\log\abs{s}-\frac1{\gamma^2}\abs{s}^\gamma,
 \qquad F(0)=0.
\end{equation}
With $\theta=\gamma$,
\begin{equation}\label{eq:log-H}
 H_\gamma(s)=\frac1\gamma sf(s)-F(s)=\frac1{\gamma^2}\abs{s}^\gamma.
\end{equation}
Thus $H_\gamma\ge0$ and $H_\gamma(\lambda s)=\lambda^\gamma H_\gamma(s)\le H_\gamma(s)$ for $0\le\lambda\le1$. Moreover, for every $\mu>0$,
\begin{equation}\label{eq:log-derivative-growth}
 \abs{f'(s)}\le C_\mu\bigl(1+\abs{s}^{\gamma-2+\mu}\bigr).
\end{equation}
Consequently, \textup{(F1)} holds whenever
\begin{equation}\label{eq:gamma-range}
 \begin{cases}
 \gamma>2,&n=1,2,\\[1mm]
 2<\gamma<\dfrac{2(n-1)}{n-2},&n\ge3.
 \end{cases}
\end{equation}
Indeed, for $n\ge3$, choose $\mu>0$ such that $\gamma-2+\mu<2/(n-2)$. The focusing condition follows by scaling any nonzero nonnegative $\phi\in C_c^\infty(\Omega)$.

Define
\begin{align}\label{eq:log-JI}
 J_\gamma(u,w)&=\frac12Q_\alpha(u,w)-\frac1\gamma\int_\Omega\abs{u}^\gamma\log\abs{u}\,\dd x
 +\frac1{\gamma^2}\norm{u}_\gamma^\gamma,\notag\\
 I_\gamma(u,w)&=Q_\alpha(u,w)-\int_\Omega\abs{u}^\gamma\log\abs{u}\,\dd x,
\end{align}
and let $d_\gamma$ be the corresponding well depth. Set
\begin{equation}\label{eq:log-sigma-star}
 \sigma_{*,\gamma}=
 \begin{cases}
 1,&n=1,2,\\[1mm]
 1-\dfrac{(n-2)(\gamma-2)}2,&n\ge3.
 \end{cases}
\end{equation}

\begin{corollary}[Logarithmic-source dynamics]\label{cor:log-dynamics}
Assume \eqref{eq:parameter-assumptions} and \eqref{eq:gamma-range}. Then:
\begin{enumerate}[label=\textup{(\roman*)},leftmargin=2.2em]
\item Every finite-energy initial state generates a unique maximal energy solution satisfying the exact energy identity and continuation alternative.
\item If $E(0)<d_\gamma$ and $I_\gamma(u_0,w_0)>0$, the solution is global and the stable set is invariant. If $0<\abs{\alpha}<\lambda_1$, then $Y(t)\rightharpoonup0$ in $\Hcal$, and $Y(t)\to0$ strongly under $\mathrm{(C)}_{\sigma_{*,\gamma}}$. If, in addition, $Y_0\in\Hcal_1$, then regularity propagates as in Theorem~\ref{thm:regular-propagation}; under \textup{(SFV)}, the uniform regular bound and higher-order dissipation in \eqref{eq:regular-SFV-conclusion} hold.
\item If $0<\abs{\alpha}<\lambda_1$ and $E(0)=d_\gamma$, the critical-level alternatives of Theorem~\ref{thm:critical-depth} hold with $I$ and $J$ replaced by $I_\gamma$ and $J_\gamma$.
\item If $E(0)<d_\gamma$ and $I_\gamma(u_0,w_0)<0$, then the maximal solution blows up in finite time.
\item If $E(0)\ge0$, $P_0>0$, and
\[
 P_0^2>
 \left(2E(0)+\frac{D}{\gamma\tau(\gamma-2)}\norm{\nabla v_0}_2^2\right)M_0,
\]
then finite-time blow-up occurs. Blow-up data exist at every prescribed level $L\ge0$.
\end{enumerate}
\end{corollary}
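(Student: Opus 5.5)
The corollary reduces to verifying \textup{(F1)}--\textup{(F4)} for \eqref{eq:log-source} with $\theta=\gamma$ and an admissible exponent $p$. After that, items \textup{(i)}--\textup{(v)} follow by quoting the general theorems, as in Corollary~\ref{cor:power-dynamics}.

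\emph{Verification of the hypotheses.} First, $f\in C^1(\R)$ with $f(0)=0$. Away from $0$ one computes
\[
 f'(s)=\abs{s}^{\gamma-2}\bigl((\gamma-1)\log\abs{s}+1\bigr),
\]
which tends to $0$ as $s\to0$ because $\gamma>2$. Hence $f'$ extends continuously with $f'(0)=0$, and $f(s)/s=\abs{s}^{\gamma-2}\log\abs{s}\to0$, which gives \eqref{eq:F1-small}.

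For the growth bound, use \eqref{eq:log-derivative-growth}: for $\abs{s}\le1$, $f'$ is bounded, and for $\abs{s}\ge1$, $\log\abs{s}\le C_\mu\abs{s}^\mu$. Under \eqref{eq:gamma-range} we can choose $\mu>0$ with
\[
 p:=\gamma-2+\mu<p^*.
\]
When $n\le2$, any $\mu>0$ works since $p^*=\infty$. This gives \eqref{eq:F1-growth} with $p\in(0,p^*)$.

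Conditions \textup{(F2)} and \textup{(F3)} follow directly from \eqref{eq:log-H}: $H_\gamma(s)=\gamma^{-2}\abs{s}^\gamma$ is nonnegative, and it is radially nondecreasing since $\lambda^\gamma\le1$ for $0\le\lambda\le1$.

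For \textup{(F4)}, take $0\le\phi\in C_c^\infty(\Omega)$ with $\phi\not\equiv0$. Then
\[
 \int_\Omega F(\lambda\phi)\,\dd x
 =\lambda^\gamma\Bigl(\frac{\log\lambda}{\gamma}\norm{\phi}_\gamma^\gamma+\int_\Omega F(\phi)\,\dd x\Bigr).
\]
The bracket is positive for large $\lambda$, because $\int_\Omega F(\phi)\,\dd x$ is a fixed finite number.

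\emph{Application of the general results.} With these hypotheses in hand:
\begin{enumerate}[label=\textup{(\roman*)},leftmargin=2.2em]
\item Item \textup{(i)} is Theorem~\ref{thm:local-wp}.
\item Item \textup{(ii)} combines Theorem~\ref{thm:stable-global}, Theorem~\ref{thm:weak-asymptotic}, Theorem~\ref{thm:HF-SV}, Corollaries~\ref{cor:source-variation}, \ref{cor:negative-finite-variation} and \ref{cor:fractional-regularity}, and Theorems~\ref{thm:regular-propagation} and \ref{thm:regular-SFV}.
\item Item \textup{(iii)} is Theorem~\ref{thm:critical-depth}.
\item Item \textup{(iv)} is Theorem~\ref{thm:below-depth-blowup}.
\item Item \textup{(v)} follows from Theorem~\ref{thm:positive-blowup} and Corollary~\ref{cor:prescribed-energy} with $\theta=\gamma$. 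Here $\Xi_0$ in \eqref{eq:Xi0} becomes the displayed coefficient $D/(\gamma\tau(\gamma-2))$.
\end{enumerate}
The functionals $J$, $I$ and $d$ specialize to \eqref{eq:log-JI} and $d_\gamma$ by direct substitution of \eqref{eq:log-F}.

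\emph{Main obstacle: the compactness exponent.} The general theory gives $\sigma_*=1-(n-2)p/2$ with $p=\gamma-2+\mu$. This is strictly smaller than $\sigma_{*,\gamma}$ for $n\ge3$. The fix is that $\mu>0$ is arbitrary. Given any $\sigma<\sigma_{*,\gamma}$ (or any $s<\sigma_{*,\gamma}$ in \eqref{eq:negative-FV}), choose $\mu$ so small that $\sigma<1-(n-2)(\gamma-2+\mu)/2$. The constants $C_f$ and $C_{M,\sigma}$ depend on $\mu$, but the criteria in $\mathrm{(C)}_{\sigma_{*,\gamma}}$ involve only a single fixed $\sigma$ or $s$. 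Therefore Theorem~\ref{thm:HF-SV} and Corollary~\ref{cor:negative-finite-variation} apply with that $\mu$. The conditions \textup{(HF)}, \textup{(SFV)} and \eqref{eq:eventual-fractional-bound} do not depend on $\sigma$ at all.
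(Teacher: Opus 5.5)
Your proposal is correct and follows the paper's own argument: verify \textup{(F1)}--\textup{(F4)} with $\theta=\gamma$ and $p=\gamma-2+\mu$, quote the general results of Sections~3--7, and handle the mismatch between $\sigma_*$ for $p=\gamma-2+\mu$ and $\sigma_{*,\gamma}$ by choosing $\mu$ small only after the relevant $\sigma$ or $s$ has been fixed, while \textup{(HF)}, \textup{(SFV)}, and the eventual fractional bound need no such choice. Compared with the paper's proof, you additionally write out the formula for $f'$ and the scaling identity for $\int_\Omega F(\lambda\phi)\,\dd x$, which the paper only sketches in the text preceding the corollary.
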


\begin{proof}
The preceding verification gives \textup{(F1)}--\textup{(F4)} with $\theta=\gamma$. Given a selected $\sigma$ or $s$ below $\sigma_{*,\gamma}$, choose $\mu>0$ so small that the selected exponent remains below the threshold \eqref{eq:sigma-star} corresponding to $p=\gamma-2+\mu$. The remaining alternatives in $\mathrm{(C)}_{\sigma_{*,\gamma}}$ require no such choice. The conclusions then follow from the general results of Sections~\ref{sec:local}--\ref{sec:positive-blowup}.
\end{proof}

\begin{remark}[Logarithmic exponent range]\label{rem:log-specialization}
On the common stable branch, Corollary~\ref{cor:log-dynamics} agrees with the logarithmic theory of \cite{HaLogMGT}. The restriction \eqref{eq:gamma-range} is the $L^2$-subcritical range required by the energy-space uniqueness and nonlinear-flux arguments. Although the logarithmic potential is well-defined for $2<\gamma<2^*$ when $n\ge3$, extension of the full dynamical theory to that variational range requires a different local theory and new difference and high-frequency estimates.
\end{remark}

\begin{remark}[Further admissible sources]
The framework also covers positive finite sums of powers. If
\[
 f(s)=\sum_{j=1}^m a_j\abs{s}^{q_j}s,
 \qquad a_j>0,
 \qquad 0<q_1\le\cdots\le q_m<p^*,
\]
one may take $\theta=q_1+2$, and
\[
 H_\theta(s)=\sum_{j=1}^m a_j
 \left(\frac1\theta-\frac1{q_j+2}\right)\abs{s}^{q_j+2},
\]
which is nonnegative and radially nondecreasing.
\end{remark}

\section{Concluding remarks}
The augmented variable $w=v+\tau v_t$ identifies the exact conservative core of the wave--MGT interaction and leads simultaneously to the correct energy and the correct static potential-well variables. Within this formulation, the source remainder
\[
 H_\theta(s)=\frac1\theta sf(s)-F(s)
\]
encodes the stable--unstable dichotomy: its nonnegativity gives stable coercivity, while its radial monotonicity gives both the Nehari barrier required by the MGT-history concavity functional and the transversality needed at the critical level. This yields, in one source-independent argument, global existence in the stable well, finite-time blow-up below the well depth on the unstable side, a complete classification at $E(0)=d$ for nonzero coupling, a positive-energy criterion valid for arbitrary initial MGT displacement, and prescribed-energy blow-up beyond the Nehari level.

At the linear level, nonzero coupling gives strong stability, although the wave-branch expansion rules out a uniform exponential rate and positive-time compactification. The nonlinear stable analysis distinguishes weak and strong asymptotics. For nonzero coupling and without any compactness hypothesis, every stable trajectory converges weakly to zero, and the configuration variables converge strongly in every subcritical Lebesgue space. The renormalized high-frequency identity yields more than a sufficient criterion: within the stable class, condition \textup{(HF)} is equivalent to relative compactness of the orbit and to strong convergence in the natural energy topology. The spectral condition $\mathrm{(SV)}_\sigma$, finite variation of $u$ in a negative Dirichlet scale, and finite total variation of the nonlinear force in $L^2(\Omega)$ are explicit sufficient conditions for this exact flux property. Separately, any eventual uniform bound in a phase space with positive fractional spatial regularity gives compactness directly.

For initial states in the regular phase space, the energy solution remains regular on every finite time interval, but the direct high-order estimate may grow exponentially and does not by itself compactify the full orbit. Under finite nonlinear-force variation, the corrected energy $\Escr_1-(f(u),A_Du)$ yields the stronger conclusions
\[
 \sup_{t\ge0}\norm{Y(t)}_{\Hcal_1}<\infty,
 \qquad
 A_Dz\in L^2(0,\infty;L^2(\Omega)).
\]
It remains open whether every stable trajectory converges strongly without an additional orbit condition and whether regular stable data remain uniformly bounded in the regular phase space.

\appendix
\section{Regular-phase estimates}\label{app:regular-estimates}
This appendix supplies the regular Nemytskii, product-rule, and Galerkin arguments used in Subsection~\ref{subsec:regular-upgrade}. Throughout, $H=L^2(\Omega)$, $V=H_0^1(\Omega)$, and $A_D=-\Delta$ with Dirichlet boundary conditions.

\begin{lemma}[Regular composition and temporal chain rule]\label{lem:regular-composition-general}
Assume \textup{(F1)}. For every $R>0$ there exists $C_R>0$ such that
\begin{equation}\label{eq:appendix-regular-composition}
 f(u)\in V,
 \qquad
 \norm{A_D^{1/2}f(u)}_2\le C_R\norm{A_Du}_2
\end{equation}
whenever $u\in D(A_D)$ and $\norm{u}_V\le R$. Moreover, the map
\[
 f:D(A_D)\longrightarrow V
\]
is continuous. If $T>0$ and
\[
 u\in C([0,T];D(A_D))\cap C^1([0,T];V),
\]
then
\begin{equation}\label{eq:appendix-temporal-chain-class}
 f(u)\in C([0,T];V)\cap C^1([0,T];H),
 \qquad
 \partial_tf(u)=f'(u)u_t\quad\text{in }C([0,T];H).
\end{equation}
\end{lemma}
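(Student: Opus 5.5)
The plan is to reduce everything to the chain rule $\nabla f(u)=f'(u)\nabla u$ for $u\in V$ and a H\"older--Sobolev bound on $f'(u)\nabla u$ in $L^2$, using that $u\in D(A_D)$ gives $\nabla u\in H^1(\Omega)$. First, since $f\in C^1$ with $f(0)=0$ and $f'$ of polynomial growth, the standard Stampacchia-type chain rule (approximating $f$ by $C^1$ functions with bounded derivative, or approximating $u$ by smooth functions and using Lemma~\ref{lem:nemytskii-C1}) gives $f(u)\in W^{1,1}_0$ with $\nabla f(u)=f'(u)\nabla u$, and $f(u)$ has zero trace because $f(0)=0$. It then suffices to show $f'(u)\nabla u\in L^2$ with
\[
 \norm{f'(u)\nabla u}_2\le C\bigl(1+\norm{u}_{r_1}^p\bigr)\norm{\nabla u}_{r_2},
\]
with $1/2=p/r_1+1/r_2$. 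Since $\norm{A_D^{1/2}g}_2=\norm{\nabla g}_2$ on $V$, this yields \eqref{eq:appendix-regular-composition}.

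The key step is the choice of exponents, and it is dimension-dependent. For $n\ge3$ I would take $r_1=2^*$, so $\norm{u}_{2^*}\le CR$, and $r_2$ with $1/r_2=1/2-p(n-2)/(2n)$. Elliptic regularity ($C^2$ boundary) gives $\nabla u\in H^1\hookrightarrow L^{2^*}$ with norm $\le C\norm{A_Du}_2$. The requirement $r_2\le2^*$ means $p(n-2)/(2n)\ge 1/n$, which may fail for small $p$; in that case $r_2<2^*$ is still admissible on a bounded domain, since we only need $r_2\le 2^*$, and $1/r_2\ge 1/2-1/n$ holds iff $p(n-2)/(2n)\le1/n$, i.e.\ $p\le 2/(n-2)=p^*$. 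That is exactly (F1). The constant term $1$ is handled by Poincar\'e. For $n=1,2$, $V\hookrightarrow L^q$ for all finite $q$, and $\nabla u\in H^1\hookrightarrow L^{r_2}$ for any finite $r_2$ (or $L^\infty$ when $n=1$), so any split works. The bound is linear in $\norm{A_Du}_2$ with a constant depending only on $R$, as claimed. I expect this exponent bookkeeping and the rigorous chain rule in $W^{1,1}$ to be the main (if mild) obstacle.

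For continuity of $f:D(A_D)\to V$, I would take $u_j\to u$ in $D(A_D)$. Then $u_j\to u$ in $L^{2^*}$ (or every $L^q$), and Vitali's theorem, exactly as in Lemma~\ref{lem:nemytskii-C1}, gives $f'(u_j)\to f'(u)$ in $L^{r_1/p}$. Meanwhile $\nabla u_j\to\nabla u$ in $L^{r_2}$. Splitting $f'(u_j)\nabla u_j-f'(u)\nabla u$ and applying H\"older gives convergence in $L^2$; $L^2$ convergence of $f(u_j)$ itself follows from \eqref{eq:f-local-Lipschitz}.

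For the temporal statement, continuity $f(u)\in C([0,T];V)$ follows from the continuity just proved. For $C^1([0,T];H)$, I would use that $u\in C^1([0,T];V)$ and that $f:V\to H$ is $C^1$ with $Df(u)h=f'(u)h$ (Lemma~\ref{lem:nemytskii-C1}). The Fr\'echet chain rule then gives $\partial_tf(u)=f'(u)u_t$. Its continuity in $H$ follows from operator-norm continuity of $u\mapsto f'(u)\in\mathcal L(V,H)$ combined with $u_t\in C([0,T];V)$.
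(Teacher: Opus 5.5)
Your proposal is correct and follows essentially the same route as the paper. Both arguments split $f'(u)\nabla u$ by H\"older: you take the endpoint $f'(u)\in L^{2^*/p}$, while the paper takes $f'(u)\in L^a$ with $a$ slightly above $n$ and $ap<2^*$. Both then use elliptic $H^2$-regularity plus Sobolev embedding for $\nabla u$, Vitali's theorem for continuity of $f:D(A_D)\to V$, and the Fr\'echet chain rule from Lemma~\ref{lem:nemytskii-C1} for the temporal statement.

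One small slip: your sentence saying $r_2\le2^*$ ``means $p(n-2)/(2n)\ge1/n$, which may fail for small $p$'' has the inequality reversed. The condition you then derive, $p\le2/(n-2)$, is the correct one and is implied by \textup{(F1)}.
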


\begin{proof}
Suppose first that $n\ge2$. Choose $a>\max\{n,1/p\}$ so that $ap<2^*$ when $n\ge3$; such a choice is possible because $p<2/(n-2)$. When $n=2$, no upper restriction is needed. Define $b$ by
\[
 \frac12=\frac1a+\frac1b.
\]
Then $2<b<2^*$ if $n\ge3$, while $b<\infty$ if $n=2$. Assumption \textup{(F1)} and Sobolev embedding give, on every $V$-ball of radius $R$,
\begin{equation}\label{eq:appendix-fprime-a}
 \norm{f'(u)}_a
 \le C\bigl(1+\norm{u}_{ap}^p\bigr)\le C_R.
\end{equation}
Elliptic regularity and Sobolev embedding yield
\begin{equation}\label{eq:appendix-elliptic-W1b}
 \norm{\nabla u}_b\le C\norm{u}_{H^2}\le C\norm{A_Du}_2,
 \qquad u\in D(A_D).
\end{equation}
The Sobolev chain rule, H\"older's inequality, \eqref{eq:appendix-fprime-a}, and \eqref{eq:appendix-elliptic-W1b} imply
\[
 \norm{\nabla f(u)}_2
 =\norm{f'(u)\nabla u}_2
 \le C_R\norm{A_Du}_2.
\]
Since $u$ has zero trace and $f(0)=0$, the trace of $f(u)$ vanishes. This proves \eqref{eq:appendix-regular-composition} for $n\ge2$.

If $n=1$, the embeddings $H_0^1(\Omega)\hookrightarrow L^\infty(\Omega)$ and $D(A_D)\hookrightarrow W^{1,\infty}(\Omega)$ give
\[
 \norm{f'(u)}_\infty\le C_R,
 \qquad
 \norm{\nabla f(u)}_2\le C_R\norm{\nabla u}_2
 \le C_R\norm{A_Du}_2,
\]
which proves the same estimate.

We prove continuity. Let $u_m\to u$ in $D(A_D)$. For $n\ge2$, $u_m\to u$ in measure and in every Lebesgue space needed above. Since $f'$ is continuous, $f'(u_m)\to f'(u)$ in measure. Choose $\eta>0$ so small that $ap(1+\eta)<2^*$ when $n\ge3$; in dimension two choose any finite exponent. The growth bound on $f'$ shows that $\{|f'(u_m)|^a\}$ is uniformly integrable. Hence Vitali's theorem gives
\[
 f'(u_m)\to f'(u)\qquad\text{in }L^a(\Omega).
\]
Together with $\nabla u_m\to\nabla u$ in $L^b(\Omega)$, this yields
\begin{align*}
 \norm{\nabla f(u_m)-\nabla f(u)}_2
 &\le\norm{f'(u_m)-f'(u)}_a\norm{\nabla u_m}_b\\
 &\quad+\norm{f'(u)}_a\norm{\nabla u_m-\nabla u}_b\longrightarrow0.
\end{align*}
For $n=1$, convergence in $D(A_D)$ implies uniform convergence of $u_m$ and convergence of their first derivatives in $L^2$, so the same conclusion follows from continuity of $f'$. Poincar\'e's inequality proves continuity into $V$.

Finally, Lemma~\ref{lem:nemytskii-C1} states that $f:V\to H$ is continuously Fr\'echet differentiable with derivative $Df(u)h=f'(u)h$. The Banach-space chain rule therefore gives $f(u)\in C^1([0,T];H)$ and the derivative in \eqref{eq:appendix-temporal-chain-class}; continuity into $V$ follows from the first part of the lemma.
\end{proof}

\begin{lemma}[Mixed regular product rule]\label{lem:regular-mixed-product}
Under the trajectory assumptions in Lemma~\ref{lem:regular-composition-general}, the map
\[
 t\longmapsto\ip{f(u(t))}{A_Du(t)}
\]
belongs to $C^1([0,T])$ and
\begin{equation}\label{eq:appendix-mixed-product}
 \frac{\dd}{\dd t}\ip{f(u)}{A_Du}
 =\ip{f'(u)u_t}{A_Du}
 +\ip{A_D^{1/2}f(u)}{A_D^{1/2}u_t}.
\end{equation}
\end{lemma}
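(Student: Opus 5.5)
The plan is to reduce the statement to two ingredients that are already available: the temporal chain rule for $g(t)=f(u(t))$ from Lemma~\ref{lem:regular-composition-general}, and the self-adjointness of $A_D$, which lets us move half of $A_D$ onto $f(u)$. Since $u\in C([0,T];D(A_D))$ and $f(u)\in C([0,T];V)$, we have $A_D^{1/2}f(u)\in C([0,T];H)$. Because $f(u)\in V=D(A_D^{1/2})$ and $u\in D(A_D)$, we may rewrite the pairing as
\[
 \ip{f(u(t))}{A_Du(t)}=\ip{A_D^{1/2}f(u(t))}{A_D^{1/2}u(t)}.
\]
Continuity of the map $t\mapsto\ip{f(u)}{A_Du}$ is then immediate, because $f(u)\in C([0,T];H)$ and $A_Du\in C([0,T];H)$.

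For differentiability, we would not differentiate $A_D^{1/2}f(u)$ in $H$, since we only know $f(u)\in C^1([0,T];H)$. Instead we split the difference quotient asymmetrically:
\[
 \frac{\ip{g(t+h)}{A_Du(t+h)}-\ip{g(t)}{A_Du(t)}}{h}
 =\ip{\frac{g(t+h)-g(t)}{h}}{A_Du(t)}
 +\ip{A_D^{1/2}g(t+h)}{\frac{A_D^{1/2}u(t+h)-A_D^{1/2}u(t)}{h}}.
\]
The first term converges to $\ip{f'(u)u_t}{A_Du}$, because $g\in C^1([0,T];H)$ with $g_t=f'(u)u_t$ by \eqref{eq:appendix-temporal-chain-class}, and $A_Du(t)$ is fixed in $H$. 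For the second term, we use $u\in C^1([0,T];V)$ to get that the quotient tends to $A_D^{1/2}u_t(t)$ strongly in $H$, while $A_D^{1/2}g(t+h)\to A_D^{1/2}g(t)$ in $H$ by $g\in C([0,T];V)$. A strong-times-strong product in $H$ converges, which yields \eqref{eq:appendix-mixed-product}.

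Finally, we check continuity of the derivative. The map $t\mapsto f'(u)u_t$ is continuous into $H$ and $A_Du$ is continuous into $H$. Likewise, $A_D^{1/2}f(u)$ and $A_D^{1/2}u_t$ are continuous into $H$. Hence the right-hand side of \eqref{eq:appendix-mixed-product} is continuous, and the map lies in $C^1([0,T])$.

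The only delicate point is avoiding any need for $\partial_t f(u)$ in $V$. The asymmetric splitting above handles this, together with the identification $\ip{g}{A_Du}=\ip{A_D^{1/2}g}{A_D^{1/2}u}$ for $g\in V$, $u\in D(A_D)$, which follows from the spectral definition of $A_D^{1/2}$.
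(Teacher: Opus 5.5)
Your proof is correct, but it takes a different route from the paper.

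The paper sets $a=f(u)$ and $b=u$, extends both slightly beyond $[0,T]$, and mollifies in time. It then applies the classical product rule to $\ip{a_\varepsilon}{A_Db_\varepsilon}$ and passes to the limit. That limit uses uniform convergence of $(a_\varepsilon)_t$ and $A_Db_\varepsilon$ in $H$, and of $a_\varepsilon$ and $(b_\varepsilon)_t$ in $V$, on compact subintervals; the endpoints are recovered by continuity.

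You instead work directly with the difference quotient and split it asymmetrically. The time increment of $g=f(u)$ is paired in $H$ against the fixed vector $A_Du(t)$. The increment of $u$ is paired through $A_D^{1/2}$ against $A_D^{1/2}g(t+h)$. Each piece then converges using exactly the available regularity, $g\in C^1([0,T];H)\cap C([0,T];V)$ and $u\in C([0,T];D(A_D))\cap C^1([0,T];V)$.

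Both arguments rest on the same hypotheses and the same identity $\ip{g}{A_Du}=\ip{A_D^{1/2}g}{A_D^{1/2}u}$ for $g\in V$, $u\in D(A_D)$. Yours is more elementary. It needs no extension across the endpoints; the paper leaves that extension unspecified, and it would have to preserve both regularity classes. It also needs no regularization and no limit passage in an integrated identity, and it gives the one-sided derivatives at $t=0$ and $t=T$ directly.

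The paper's mollification template is the more flexible one. If one later wanted the same formula under Sobolev-in-time hypotheses, such as $W^{1,1}$ in place of $C^1$, it would give the identity in integrated, almost-everywhere form without pointwise difference-quotient arguments.
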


\begin{proof}
Set $a(t)=f(u(t))$ and $b(t)=u(t)$. Lemma~\ref{lem:regular-composition-general} gives
\[
 a\in C([0,T];V)\cap C^1([0,T];H),
 \qquad
 b\in C([0,T];D(A_D))\cap C^1([0,T];V).
\]
Extend $a$ and $b$ slightly beyond $[0,T]$ and mollify in time. For the smooth mollifications $a_\varepsilon,b_\varepsilon$,
\[
 \frac{\dd}{\dd t}\ip{a_\varepsilon}{A_Db_\varepsilon}
 =\ip{(a_\varepsilon)_t}{A_Db_\varepsilon}
 +\ip{A_D^{1/2}a_\varepsilon}{A_D^{1/2}(b_\varepsilon)_t}.
\]
The first term converges uniformly on compact subintervals because $a_t,A_Db\in C(H)$, and the second because $a,b_t\in C(V)$. Passage to the limit and the identity $a_t=f'(u)u_t$ prove \eqref{eq:appendix-mixed-product}. Endpoint values follow by continuity.
\end{proof}

\begin{proposition}[Forced linear regularity]\label{prop:forced-linear-regular}
Let $T>0$, $G\in L^1(0,T;V)$, and $Y_0\in\Hcal_1$. The forced linear system
\begin{equation}\label{eq:appendix-forced-linear}
\begin{cases}
 u_{tt}+A_Du+\alpha w=G,\\
 w_{tt}+A_Dw+DA_Dz+\alpha u=0,\\
 \tau z_t+z=w_t
\end{cases}
\end{equation}
has a unique energy solution, and this solution satisfies
\begin{equation}\label{eq:appendix-forced-regularity}
 Y\in C([0,T];\Hcal_1),
 \qquad
 u_{tt}\in L^1(0,T;H),
 \qquad
 w_{tt}\in C([0,T];H),
 \qquad
 z_t\in C([0,T];V).
\end{equation}
Moreover,
\begin{align}\label{eq:appendix-forced-identity}
 \Escr_1(t)+D\int_0^t\norm{A_Dz(s)}_2^2\,\dd s
 =\Escr_1(0)+\int_0^t
 \ip{A_D^{1/2}G(s)}{A_D^{1/2}u_t(s)}\,\dd s.
\end{align}
If $G\in C([0,T];V)$, then $u_{tt}\in C([0,T];H)$.
\end{proposition}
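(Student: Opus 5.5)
Existence and uniqueness of the energy solution come from Proposition~\ref{prop:linear-semigroup}: the system \eqref{eq:appendix-forced-linear} is $Y'=\Acal Y+(0,G,0,0,0)$. Since $G\in L^1(0,T;H)$, the Duhamel formula gives a unique mild solution in $C([0,T];\Hcal)$. For the regularity I would not work with the abstract semigroup on $\Hcal_1$. I would use a Galerkin scheme in the Dirichlet eigenbasis $\{e_k\}$, because $P_N$ commutes with $A_D$ and decouples the system modewise. Set $u_N=P_Nu$, $w_N=P_Nw$, $z_N=P_Nz$. By the commutation argument in Proposition~\ref{prop:linear-strong-stability}, these are exactly the projections of the mild solution, and they solve a finite-dimensional linear ODE with forcing $P_NG$. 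Therefore no separate limit identification is needed: the Galerkin approximants are $\mathbf P_NY$.

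Next, for the finite-dimensional system I would test the first equation by $A_Du_{N,t}$ and the second by $A_Dw_{N,t}$, and use $w_{N,t}=z_N+\tau z_{N,t}$ to produce the dissipation. This is the same computation as the first-order energy identity, shifted by one power of $A_D$. It gives \eqref{eq:appendix-forced-identity} for $\mathbf P_NY$ with $\Escr_1$ and forcing term $\ip{A_D^{1/2}P_NG}{A_D^{1/2}u_{N,t}}$. Using \eqref{eq:regular-energy-equivalence} and $\abs{\ip{A_D^{1/2}G}{A_D^{1/2}u_t}}\le\norm{G}_V\sqrt{2\Escr_1}$, the square-root argument from the proof of Theorem~\ref{thm:regular-SFV} yields
\[
 \sqrt{\Escr_1(\mathbf P_NY(t))}\le\sqrt{\Escr_1(\mathbf P_NY_0)}+C\int_0^t\norm{G(s)}_V\,\dd s,
\]
which is uniform in $N$. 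The same differences applied to $(\mathbf P_M-\mathbf P_N)Y$, with forcing $(P_M-P_N)G\to0$ in $L^1(0,T;V)$ and data tails tending to zero in $\Hcal_1$, show that $\mathbf P_NY$ is Cauchy in $C([0,T];\Hcal_1)$. Hence $Y\in C([0,T];\Hcal_1)$. Passing to the limit in the Galerkin identity, including $\int_0^t\norm{A_Dz_N}_2^2\to\int_0^t\norm{A_Dz}_2^2$ from the Cauchy estimate on the dissipation, gives \eqref{eq:appendix-forced-identity}.

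The remaining time regularity is read off the equations. First, $u_{tt}=-A_Du-\alpha w+G$ lies in $C([0,T];H)+L^1(0,T;V)\subset L^1(0,T;H)$, and it is continuous into $H$ when $G\in C([0,T];V)$. Second, $z_t=\tau^{-1}(w_t-z)$ lies in $C([0,T];V)$. Third, $w_{tt}=-A_D(w+Dz)-\alpha u$ lies in $C([0,T];H)$ because $w,z\in C([0,T];D(A_D))$.

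I expect the main obstacle to be the rigorous identification of the Galerkin limit with the unique energy solution, and the convergence of the forcing integral when $G$ is only $L^1$ in time. The commutation $\mathbf P_Ne^{t\Acal}=e^{t\Acal}\mathbf P_N$ handles the first. For the second, since $A_D^{1/2}u_{N,t}$ converges in $C([0,T];H)$ and $A_D^{1/2}P_NG\to A_D^{1/2}G$ in $L^1(0,T;H)$, the forcing integrals converge.
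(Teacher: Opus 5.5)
Your proposal is correct and follows the paper's proof essentially step for step: Galerkin truncation in the Dirichlet eigenbasis, testing by $A_Du_{N,t}$ and $A_Dw_{N,t}$ with $w_{N,t}=z_N+\tau z_{N,t}$, the square-root estimate, the Cauchy property of differences in $C([0,T];\Hcal_1)$, and reading off the time regularity from the equations. The only variation is that you identify the Galerkin approximants directly as $\mathbf P_NY$, using the commutation of $\mathbf P_N$ with $e^{t\Acal}$. This makes explicit the identification that the paper obtains by appealing to uniqueness of the energy solution.
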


\begin{proof}
Let $\{e_k\}$ be the Dirichlet eigenbasis and let $P_m$ denote projection onto the first $m$ modes. Solve \eqref{eq:appendix-forced-linear} in the corresponding finite-dimensional space with initial state $\mathbf P_mY_0$ and forcing $P_mG$. Testing the first equation by $A_Du_{m,t}$ and the second by $A_Dw_{m,t}$ gives
\begin{equation}\label{eq:appendix-linear-Galerkin}
 \frac{\dd}{\dd t}\Escr_{1,m}(t)+D\norm{A_Dz_m(t)}_2^2
 =\ip{A_D^{1/2}P_mG(t)}{A_D^{1/2}u_{m,t}(t)}.
\end{equation}
Indeed, $w_{m,t}=z_m+\tau z_{m,t}$ implies
\[
 D\ip{A_Dz_m}{A_Dw_{m,t}}
 =D\norm{A_Dz_m}_2^2
 +\frac{\tau D}{2}\frac{\dd}{\dd t}\norm{A_Dz_m}_2^2,
\]
and the two coupling terms form the derivative of
$\alpha\ip{A_D^{1/2}u_m}{A_D^{1/2}w_m}$.

By \eqref{eq:regular-energy-equivalence}, for every $\varepsilon>0$,
\[
 \frac{\dd}{\dd t}\sqrt{\Escr_{1,m}(t)+\varepsilon}
 \le C\norm{G(t)}_V.
\]
Thus
\begin{equation}\label{eq:appendix-linear-apriori}
 \sup_{0\le t\le T}\sqrt{\Escr_{1,m}(t)}
 \le\sqrt{\Escr_{1,m}(0)}+C\norm{G}_{L^1(0,T;V)}.
\end{equation}
The same estimate for the difference of two projected systems shows that the Galerkin sequence is Cauchy in $C([0,T];\Hcal_1)$, since $\mathbf P_mY_0\to Y_0$ in $\Hcal_1$ and $P_mG\to G$ in $L^1(0,T;V)$. Passing to the limit proves \eqref{eq:appendix-forced-identity}. The energy solution is unique by the linear semigroup realization in Proposition~\ref{prop:linear-semigroup}, and therefore coincides with this regular limit.

The third equation gives $z_t=\tau^{-1}(w_t-z)\in C([0,T];V)$. Since $w,z\in C([0,T];D(A_D))$, the second equation yields $w_{tt}\in C([0,T];H)$. The first gives $u_{tt}\in L^1(0,T;H)$ and gives continuity when $G\in C([0,T];V)$.
\end{proof}

\begin{proof}[Proof of Theorem~\ref{thm:regular-propagation}]
Let $\mathbf P_m$ act componentwise and set $Y_{0m}=\mathbf P_mY_0$. Since $Y_{0m}\to Y_0$ in $\Hcal_1$, the initial energies converge. If $(u_0,w_0)\ne(0,0)$, continuity of $I$ gives $I(P_mu_0,P_mw_0)>0$ for all sufficiently large $m$; if $(u_0,w_0)=(0,0)$, its projections also vanish. Thus, for all large $m$,
\[
 E(Y_{0m})<d,
 \qquad
 (P_mu_0,P_mw_0)\in\Wcal.
\]
The finite-dimensional exact energy identity and the first-contact argument from Theorem~\ref{thm:stable-global} give global stable Galerkin trajectories and a constant $M>0$, independent of $m$ and $t$, such that
\begin{equation}\label{eq:appendix-lower-Galerkin-bound}
 \norm{u_m(t)}_V+\norm{w_m(t)}_V+\norm{z_m(t)}_V
 +\norm{u_{m,t}(t)}_2+\norm{w_{m,t}(t)}_2\le M.
\end{equation}
Testing at the regular level as in \eqref{eq:appendix-linear-Galerkin} gives
\begin{equation}\label{eq:appendix-nonlinear-high-identity}
 \frac{\dd}{\dd t}\Escr_{1,m}(t)+D\norm{A_Dz_m(t)}_2^2
 =\ip{A_D^{1/2}f(u_m(t))}{A_D^{1/2}u_{m,t}(t)}.
\end{equation}
Lemma~\ref{lem:regular-composition-general}, \eqref{eq:appendix-lower-Galerkin-bound}, and \eqref{eq:regular-energy-equivalence} imply
\[
 \abs{\ip{A_D^{1/2}f(u_m)}{A_D^{1/2}u_{m,t}}}
 \le C_M\norm{A_Du_m}_2\norm{A_D^{1/2}u_{m,t}}_2
 \le C_M\Escr_{1,m}(t).
\]
Gronwall's inequality yields
\begin{equation}\label{eq:appendix-Galerkin-exp}
 \Escr_{1,m}(t)\le\Escr_{1,m}(0)e^{C_Mt}.
\end{equation}
Integration of \eqref{eq:appendix-nonlinear-high-identity} gives, for every $T>0$,
\begin{equation}\label{eq:appendix-Galerkin-Az}
 \sup_m\left[
 \sup_{0\le t\le T}\Escr_{1,m}(t)
 +D\int_0^T\norm{A_Dz_m(t)}_2^2\,\dd t
 \right]\le C\Escr_1(0)e^{C_MT}.
\end{equation}
In particular, the finite-dimensional solutions cannot blow up.

Fix $T>0$. The estimates give
\[
 u_m,w_m,z_m\quad\text{bounded in }L^\infty(0,T;D(A_D)),
 \qquad
 u_{m,t},w_{m,t}\quad\text{bounded in }L^\infty(0,T;V).
\]
The third equation bounds $z_{m,t}$ in $L^\infty(0,T;V)$. Lemma~\ref{lem:source-estimates} and the equations bound $u_{m,tt},w_{m,tt}$ in $L^\infty(0,T;H)$. Since $D(A_D)\Subset V$ and $V\Subset H$, the Banach-valued Arzel\`a--Ascoli theorem yields, after extraction,
\begin{align}\label{eq:appendix-regular-compactness}
 u_m,w_m,z_m&\to u,w,z&&\text{in }C([0,T];V),\notag\\
 u_{m,t},w_{m,t}&\to u_t,w_t&&\text{in }C([0,T];H).
\end{align}
Local Lipschitz continuity of $f:V\to H$ gives $f(u_m)\to f(u)$ in $C([0,T];H)$, so passage to the limit identifies $(u,w,z)$ with the unique energy solution from Theorem~\ref{thm:local-wp}. The high-order bounds also imply
\[
 u,w,z\in L^\infty(0,T;D(A_D)),
 \qquad
 u_t,w_t\in L^\infty(0,T;V).
\]
Lemma~\ref{lem:regular-composition-general} therefore gives $f(u)\in L^\infty(0,T;V)$. Apply Proposition~\ref{prop:forced-linear-regular} to the limiting system with $G=f(u)$. The resulting regular solution has the same forcing and initial data as the energy solution; uniqueness of the forced linear problem identifies them. Hence $Y\in C([0,T];\Hcal_1)$. The regular composition lemma now gives $f(u)\in C([0,T];V)$, and the equations imply the remaining continuity in \eqref{eq:regularity-class}.

Finally, Proposition~\ref{prop:forced-linear-regular} with $G=f(u)$ gives \eqref{eq:regular-high-identity}. Repeating the estimate above yields \eqref{eq:regular-finite-time-bound}; integration yields \eqref{eq:regular-finite-time-dissipation}. Since $T>0$ was arbitrary, the propagation is global.
\end{proof}

\section*{Acknowledgments}
This research was supported by Basic Science Research Program through the National Research Foundation of Korea (NRF) funded by the Ministry of Education (RS-2022-NR075641).

\end{document}